\newtheorem{thm}{Theorem}[section]
\newtheorem{lemma}[thm]{Lemma}
\newtheorem{prop}[thm]{Proposition}
\newtheorem{cor}[thm]{Corollary}
\theoremstyle{definition}
\newtheorem{definition}[thm]{Definition}
\newtheorem{remark}[thm]{Remark}
\newtheorem{example}[thm]{Example}
\newtheorem{notation}[thm]{Notation}
\newtheoremstyle{alg}
    {3pt}
    {3pt}
    {}
    {}
    {\bfseries}
    {.}
    {\newline}
    {}
\theoremstyle{alg}
\newtheorem{algorithm}[thm]{Algorithm}
\DeclareMathOperator{\Ln}{L}
\DeclareMathOperator{\SL}{SL}
\DeclareMathOperator{\PSL}{PSL}
\DeclareMathOperator{\GL}{GL}
\DeclareMathOperator{\PGL}{PGL}
\DeclareMathOperator{\Aut}{Aut}
\DeclareMathOperator{\Alt}{A}
\DeclareMathOperator{\Sym}{S}
\DeclareMathOperator{\Stab}{Stab}
\DeclareMathOperator{\Gal}{Gal}
\DeclareMathOperator{\PGammaL}{P\Gamma L}
\DeclareMathOperator{\GammaL}{\Gamma L}
\DeclareMathOperator{\tr}{tr}
\DeclareMathOperator{\Norm}{N}
\DeclareMathOperator{\Spec}{Spec}
\DeclareMathOperator{\MaxSpec}{MaxSpec}
\DeclareMathOperator{\MinAss}{MinAss}
\DeclareMathOperator{\im}{im}
\DeclareMathOperator{\V}{V}
\DeclareMathOperator{\I}{I}
\DeclareMathOperator{\Die}{D}
\DeclareMathOperator{\Nor}{N}
\DeclareMathOperator{\Cyc}{C}
\def\tri{\trianglelefteq}
\def\veps{\varepsilon}
\newcommand{\N}{{\mathbb{N}}}
\newcommand{\Z}{{\mathbb{Z}}}
\newcommand{\Q}{{\mathbb{Q}}}
\newcommand{\F}{{\mathbb{F}}}
\newcommand{\C}{{\mathbb{C}}}
\newcommand{\mcC}{{\mathcal{C}}}
\newcommand{\mcE}{{\mathcal{E}}}
\newcommand{\mcI}{{\mathcal{I}}}
\newcommand{\mcR}{{\mathcal{R}}}
\newcommand{\mcP}{{\mathcal{P}}}
\newcommand{\mfA}{{\mathfrak{A}}}
\newcommand{\mfD}{{\mathfrak{D}}}
\newcommand{\mfS}{{\mathfrak{S}}}
\newcommand{\ol}[1]{\overline{#1}}
\newcommand{\wt}[1]{\widetilde{#1}}
\newcommand{\wh}[1]{\widehat{#1}}
\date{}
\author{Sebastian Jambor}
\title{An $\Ln_2$-quotient algorithm for finitely presented groups \\
    on arbitrarily many generators}
\begin{document}

\maketitle

\begin{abstract}
\noindent
\textbf{Abstract.}
We generalize the Plesken-Fabia{\'n}ska $\Ln_2$-quotient algorithm for finitely 
presented groups on two or three generators to allow an arbitrary number of 
generators.
The main difficulty lies in a constructive description of the invariant ring 
of $\GL(2, K)$ on $m$ copies of $\SL(2, K)$ by simultaneous conjugation.
By giving this description, we generalize and simplify some of the known 
results in invariant theory.  An implementation of the algorithm is available 
in the computer algebra system \textsc{Magma}.

\medskip
\noindent
\textbf{Keywords.} 
Finitely presented groups; quotient algorithm; varieties of representations; 
invariant theory

\medskip
\noindent
\textbf{2010 Mathematics Subject Classification.} 
20F05; 
13A50 
\end{abstract}

\let\thefootnote\relax\footnote{The author was supported by the Alexander von 
Humboldt Foundation via a Feodor Lynen Research Fellowship}

\section{Introduction}

The Plesken-Fabia{\'n}ska $\Ln_2$-quotient algorithm \cite{l2q} takes as input 
a finitely presented group $G$ on two generators and computes all quotients of 
$G$ which are isomorphic to $\PSL(2, q)$ or $\PGL(2, q)$.
The algorithm finds all possible prime powers~$q$, and also deals with the case 
when there are infinitely many.
This was adapted by Fabia{\'n}ska \cite{fabianska} to allow finitely presented 
groups on three generators. In particular, the algorithm can decide whether $G$
has infinitely many quotients isomorphic to $\PSL(2, q)$ or $\PGL(2, q)$, so in 
some cases it can be used to prove that a finitely presented group is infinite.
This has been applied for example in \cite{havas}.
In this paper, we generalize the algorithm to allow finitely presented groups 
on an arbitrary number of generators.

The method of Fabia{\'n}ska and Plesken uses the character of representations 
$F_2 \to \SL(2, K)$, where $F_2$ is the free group of rank~$2$ and $K$ is an 
arbitrary field. The character is fully determined by the traces of the images 
of the two generators of $F_2$ and their product. This observation goes as far 
back as to Vogt \cite{vogt} and Fricke and Klein \cite{fricke}.
Horowitz \cite{horowitz} gives a rigorous proof of this fact, and generalizes 
it to representations $F_m \to \SL(2, K)$ for an arbitrary $m$, by proving that 
a character is fully determined by $2^m - 1$ traces.
While the traces for $m = 2$ are algebraically independent (that is, for every 
choice of traces for the images of the two generators and their product, there 
always exists a representation with these traces), this is no longer true for 
$m > 2$. The problem is thus to describe all relations between the traces, or 
equivalently, to give a presentation for the invariant ring 
$K[\SL(2,K)^m]^{\GL(2, K)}$, where $\GL(2, K)$ acts on $m$ copies of 
$\SL(2, K)$ by conjugation. Furthermore, we need this description to be 
independent of the characteristic of the field $K$. This problem has a long 
history. Procesi \cite{procesi} proves that the invariant ring 
$K[(K^{n \times n})^m]^{\GL(n, K)}$ is finitely generated if $K$ has 
characteristic zero, and Donkin \cite{donkin} generalizes this to arbitrary 
fields~$K$. However, their results are non-constructive.
Procesi \cite{procesi2x2} gives an implicit description of the invariant ring 
$\C[(\C^{2 \times 2})^m]^{\GL(2, \C)}$, and Drensky \cite{drensky} gives an 
explicit description, however, their results are not valid for fields of 
characteristic~$2$. Magnus \cite{magnus} uses Horowitz's results to to give a 
description of the quotient ring of the invariant ring.

We will use the approach of Horowitz and Magnus to get a partial description of 
the invariant ring. The methods are constructive and the arguments are shorter 
than the original arguments; at the same time we get more precise results, 
needed for the algorithm. This theory is developed in Section~\ref{S:fricke}.
Sections~\ref{S:tracetuples}--\ref{S:pglpsl} are adaptations of \cite{l2q}, 
where we have to generalize results on characters and traces to work for 
arbitrarily many generators. 
Up until the end of Section~\ref{S:pglpsl}, all results assume that 
representations restricted to the subgroup generated by the first two 
generators is absolutely irreducible. The results in Section~\ref{S:arbitrary} 
show how the general case can be reduced to this special case. 
In Section~\ref{S:subgrouptests}, a new test to recognize epimorphisms onto 
$\Alt_4$, $\Sym_4$, and~$\Alt_5$ is developed, since the test described in 
\cite{l2q} is inefficient for more than two generators. 
Section~\ref{S:l2ideals} describes the proper notation and theory to deal with 
an infinite number of $\Ln_2$-quotients.
The algorithm is given in Section~\ref{S:algorithm}, with several examples in 
Section~\ref{S:examples}.

\section{Fricke characters}
\label{S:fricke}

Througout the paper, $K$ is an arbitrary field and $m \geq 2$ an integer, 
unless specified otherwise. In this section, we adopt the following notation.

\begin{notation}
Given matrices $A_1, \dotsc, A_m \in \SL(2, K)$ and a list 
$i_1, \dotsc, i_k \in \{\pm 1, \dotsc, \pm m\}$, we set 
$t_{i_1,\dotsc,i_k} := \tr(A_{i_1}\dotsb A_{i_k})$, where $A_{-i} := A_i^{-1}$ 
for $i \in \{1, \dotsc, m\}$.
If $I = \{i_1, \dotsc, i_k\} \subseteq \{1, \dotsc, m\}$ with 
$i_1 < i_2 < \dotsb < i_k$, then $t_I := t_{i_1, \dotsc, i_k}$.
\end{notation}

Let $A_1, A_2, A_3 \in \SL(2, K)$. The traces satisfy the following basic 
identities.
\begin{align}
    t_{1,1,2} & = t_1t_{1,2} - t_2, \\
    t_{-1,2} & = t_1t_2 - t_{1,2}, \\
    t_{1,2,1,3} & = t_{1,2}t_{1,3} + t_{2,3} - t_2t_3, \label{E:t1213} \\
    t_{1,3,2} & = -t_{1,2,3} + t_1t_{2,3} + t_2t_{1,3} 
        + t_3t_{1,2} - t_1t_2t_3. \label{E:t132}
\end{align}
The first two identities are easy consequences of the Cayley-Hamilton Theorem,
and the others are easy consequences of the first two
(for \eqref{E:t1213} consider $\tr((A_1A_2)^2(A_2^{-1}A_3))$; 
for \eqref{E:t132} consider 
$\tr(A_1^{-1}(A_2^{-1}A_3)) = \tr((A_2A_1)^{-1}A_3)$).

We first prove that all traces of words in the $A_i$ are consequences of the 
$t_I$ with $\emptyset \neq I \subseteq \{1, \dotsc, m\}$. This was already 
observed by Vogt \cite{vogt} and later by Fricke and Klein \cite{fricke}.
The first rigorous proof of this fact was given by Horowitz \cite{horowitz}, 
and a shorter proof by Fabia{\'n}ska and Plesken \cite{l2q}.

Let $F_m$ be the free group of rank $m$, generated by $g_1, \dotsc, g_m$.

\begin{thm}[{\cite[Theorem 3.1]{horowitz}}, {\cite[Lemma 2.1]{l2q}}]
\label{T:tau}
Let $X_m := \{x_I \mid \emptyset \neq I \subseteq \{1, \dotsc, m\}\}$ be a set 
of indeterminates over~$\Z$. For every $w \in F_m$ there exists a polynomial 
$\tau(w) \in \Z[X_m]$, such that for every field $K$ and every $m$-tuple 
$A = (A_1, \dotsc, A_m) \in \SL(2, K)^m$,
\[
    \tr(w(A_1, \dotsc, A_m)) = \veps_A(\tau(w)),
\]
where $\veps_A\colon \Z[X_m] \to K$ is the evaluation map which sends $x_I$
to~$t_I$.
\end{thm}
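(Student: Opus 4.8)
The plan is to induct on the word length of $w \in F_m$, using the basic trace identities to reduce any word to a $\Z$-linear combination of traces of "positive subword products" $A_{i_1}\dotsb A_{i_k}$ with $1 \le i_1 < \dots < i_k \le m$, which are exactly the $t_I$. First I would set up the base cases: for $w = 1$ we take $\tau(1) := 2$ (the trace of the identity matrix), and for $w = g_i$ we take $\tau(g_i) := x_{\{i\}}$; note that crucially we phrase everything in $\Z[X_m]$ so that the resulting polynomial identity is characteristic-free, the evaluation map $\veps_A$ being the bridge to each concrete field $K$.

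For the inductive step, I would write a general $w$ as a product of the generators $g_i^{\pm 1}$ and argue by induction on the number of such factors. The first reduction handles inverses: using $\tr(B^{-1}) = \tr(B) \cdot 1 - \det(B)$ and $\det = 1$ on $\SL(2,K)$, one rewrites any occurrence of $g_i^{-1}$ in terms of shorter positive words via the identity $t_{-i, j_1, \dots} = t_i \, t_{j_1, \dots} - t_{i, j_1, \dots}$ (the displayed identity $t_{-1,2} = t_1 t_2 - t_{1,2}$ generalized), which is again a formal consequence of Cayley--Hamilton. So it suffices to treat words that are products of the $g_i$ with no inverses. Next, repeated powers of a single generator collapse: the identity $t_{1,1,2} = t_1 t_{1,2} - t_2$ (and its obvious generalization, obtained from $A_i^2 = t_i A_i - I$) lets one lower the exponent of any generator, so we may assume each factor in the word differs from its neighbour. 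Finally, to sort the indices into increasing order I would use the commutation/reordering identities \eqref{E:t1213} and \eqref{E:t132}: these express $\tr$ of a product in which two indices are swapped as a $\Z$-polynomial in $\tr$'s of strictly shorter products (together with already-sorted ones of the same length). Carrying out such a swap either strictly decreases length or strictly decreases the number of inversions in the index sequence, so a double induction — outer on length, inner on inversion count — terminates, at which point every surviving term is a $t_I$.

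At each step the rewriting is performed purely symbolically in $\Z[X_m]$: whenever a matrix identity such as Cayley--Hamilton or one of \eqref{E:t1213}, \eqref{E:t132} is invoked, we instead invoke the corresponding polynomial identity among the indeterminates $x_I$, which holds in $\Z[X_m]$ because it holds after every specialization $\veps_A$ over every field (equivalently, it is a universal polynomial identity verifiable over $\Z$). Assembling the pieces, the polynomial $\tau(w)$ is defined recursively by these rewrite rules, and a straightforward induction shows $\veps_A(\tau(w)) = \tr(w(A))$ for all $A$ and all $K$ simultaneously.

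The main obstacle is making the induction genuinely terminate: a naive "reduce to sorted positive words" can loop, because reordering indices may temporarily lengthen intermediate expressions (identity \eqref{E:t132} replaces a length-three trace by products of up to three traces). The key is to choose the right well-founded measure — I expect the pair (total word length, number of out-of-order adjacent index pairs) under lexicographic order, exploiting that \eqref{E:t1213} and \eqref{E:t132} always produce terms of \emph{strictly smaller} length when they are not length-preserving, and produce fewer inversions when they are. Verifying that every rewrite step strictly decreases this measure — in particular checking the shapes of the right-hand sides of the generalized identities — is the technical heart of the argument; everything else is bookkeeping.
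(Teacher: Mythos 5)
Your plan follows the same overall reduction scheme as the paper's proof: induct on word length, strip inverses and squares via Cayley--Hamilton, then treat a positive word on pairwise distinct letters by reordering it with the Fricke identity. Two details, however, are not quite right as written and are worth fixing.

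First, repeat elimination. After removing inverses and collapsing adjacent powers via $A_i^2 = t_iA_i - I$, a word can still contain a non-adjacent repeat, e.g.\ $g_1g_2g_1g_3$. You assign this to the ``sorting'' phase and describe \eqref{E:t1213} as a commutation identity, but it is not: \eqref{E:t132} is the genuine swap (it preserves the multiset of letters, so it cannot shrink a repeat), whereas \eqref{E:t1213} is the generalization of $\tr(XYXZ)=\tr(XY)\tr(XZ)+\tr(YZ)-\tr(Y)\tr(Z)$, which eliminates a repeated $X$ even when the two occurrences are separated. The paper applies exactly this to any $w$ conjugate to $g_iw'g_iw''$; cyclic conjugation (which you do not mention, but which the paper uses throughout, starting from ``cyclically reduced'') is what lets one bring a repeated or inverted letter into the required position.

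Second, termination. The measure you propose, the pair (word length, number of out-of-order adjacent index pairs) in lexicographic order, does not strictly decrease. If \eqref{E:t132} is applied as in the paper, moving a single letter past the entire suffix, the only same-length term $w_1w_3w_2$ can have \emph{more} adjacent descents than $w$: the index sequence $(1,3,4,2,5)$ has one adjacent descent, while the rewritten $(1,3,2,5,4)$ has two. What strictly decreases under the paper's rewrite is the lexicographic order of the index tuple itself: the new tuple agrees with the old on the first $j-1$ entries and has a strictly smaller $j$-th entry. Alternatively, if you insist on adjacent transpositions (take $w_3$ to be a single letter), the correct second coordinate is the \emph{total} inversion count, not the adjacent-descent count: $(3,1,2)$ and its rewrite $(1,3,2)$ each have exactly one adjacent descent, but inversion counts $2$ and $1$. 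With either fix the argument coincides with the paper's.
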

Since the proof in \cite{horowitz} is lengthy, and the result in \cite{l2q} is 
not as general, we present a short proof here in its entirety. The basic idea 
is that of \cite[Lemma 2.1]{l2q}.
\begin{proof}
We assume that $w$ is freely and cyclically reduced and proceed by induction on 
the length of $w$.
If $w$ is conjugate to $g_i^{-1}w'$ for some $w' \in F_n$ of length $|w|-1$,
set $\tau(w) = \tau(g_iw') - \tau(g_i)\tau(w')$. Thus we may assume that all 
exponents of $w$ are positive.
If $w$ is conjugate to $g_iw'g_iw''$ for some $w', w'' \in F_m$ with 
$|w'| + |w''| = |w| - 2$, set 
$\tau(w) = \tau(g_iw')\tau(g_iw'') + \tau(w'w'') - \tau(w')\tau(w'')$.
We are left to deal with the case where $w$ is of the form 
$w = g_{i_1}\dotsb g_{i_k}$ where the $i_j$ are pairwise distinct. We may 
assume $i_1 < i_j$ for all $j \in \{2, \dotsc, k\}$. The case 
$i_1 < \dotsb < i_k$ is the induction basis, so there is nothing to do. 
Otherwise, let $j$ be the smallest index with $i_j > i_{j+1}$.
Set $w_1 := g_{i_1}\dotsb g_{i_{j-1}}$, $w_2 := g_{i_j}$, and 
$w_3 := g_{i_{j+1}}\dotsb g_{i_k}$, so $w = w_1w_2w_3$.
By equation~\eqref{E:t132} we may set 
$\tau(w) := -\tau(w_1w_3w_2) + \tau(w_1)\tau(w_2w_3) + \tau(w_2)\tau(w_1w_3) 
    + \tau(w_3)\tau(w_1w_2) - \tau(w_1)\tau(w_2)\tau(w_3)$.
Either $w_1w_3w_2$ is of the desired form, or we repeat this process. This 
terminates after finitely many steps.
\end{proof}

We call $\tau(w)$ the \textit{trace polynomial} of $w$. If $n > 2$, then 
$\tau(w)$ is not unique. For example, define the \textit{Fricke polynomial}
\begin{multline*}
    \phi(x_1,x_2,x_3,x_{12}, x_{13}, x_{23}, x_{123}) := x_{123}^2 + 
        (x_1x_2x_3 - x_1x_{23} - x_2x_{13} - x_3x_{12})x_{123} \\
    + x_1^2 + x_2^2 + x_3^2 + x_{12}^2 + x_{13}^2 + x_{23}^2 - x_1x_2x_{12} 
        - x_1x_3x_{13} - x_2x_3x_{23} + x_{12}x_{13}x_{23} - 4.
\end{multline*}
Then $\veps_A(\phi) = 0$ for every choice of $A$. Proofs appear for example in 
\cite[Section~2]{horowitz} and \cite[Lemma~2.2]{magnus}.
We will see below that $\phi$ is simply a determinant condition (see 
Proposition~\ref{P:t123} and Corollary~\ref{C:fricke}).

A lot of effort has been put into describing all polynomial relations between 
the traces. More precisely, let 
\[
    I_m := \{f \in \Z[X_m] \mid \veps_A(f) = 0 \text{ for all } 
        A_1, \dotsc, A_n \in \SL(2, \C)\}
\]
and $\Phi_m := \Z[X_m]/I_m$, the \textit{ring of Fricke characters}. 
It is easy to see that $\veps_A(f) = 0$ for all $A \in \SL(2, K)^m$, so the 
role of $\C$ is not special. Horowitz \cite[Theorem~4.3]{horowitz} proves 
$I_3 = \langle \phi \rangle$, and Whittemore \cite[Theorem~1]{whittemore} 
proves that $I_m$ is not principal if $m \geq 4$.
Magnus \cite[Theorem~2.1]{magnus} shows that $\Phi_m$ can be embedded into a 
finitely generated extension field of $\Q$ of transcendence degree~$3m-3$.
Note that $\Phi_m\otimes_{\Z}\C$ is isomorphic to the invariant ring 
$\C[\SL(2, \C)^m]^{\GL(2,\C)}$.
Procesi \cite{procesi2x2} gives a description of the invariant ring 
$\C[(\C^{2\times 2})^m]^{\GL(2, \C)}$, and an explicit presentation of the 
invariant ring with generators and relations is given by Drensky 
\cite[Theorem~2.3]{drensky}. However, these results are not valid for fields of 
characteristic~$2$, and hence cannot be applied to describe $\Phi_m$.

Our first aim is to partially describe $\Phi_m$; we give a presentation of a 
localisation of~$\Phi_m$, which will be enough for our algorithmic 
applications. By doing that, we will also find new and shorter proofs of some 
of the results mentioned above.

We will use the following basic result.

\begin{prop}[{\cite[Theorem~2]{macbeath}}, {\cite[Equation~(2.7)]{magnus}}, 
    {\cite[Proposition~4.1]{brumfiel}}, {\cite[Proposition~3.1]{l2q}}]
\label{P:l2:rho}
Let $A = (A_1, A_2) \in \SL(2, K)^2$.
Then $\langle A_1, A_2 \rangle$ is absolutely irreducible if and only if 
$(t_1, t_2, t_{12})$ is not a zero of 
\[
    \rho := x_1^2 + x_2^2 + x_{12}^2 - x_1x_2x_{12} - 4.
\]
\end{prop}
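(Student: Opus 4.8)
The plan is to compute $\rho$ directly as a classical invariant and relate its vanishing to the failure of absolute irreducibility. Recall that a two-generator subgroup $\langle A_1, A_2\rangle \leq \SL(2,K)$ fails to be absolutely irreducible precisely when the two matrices have a common eigenvector over $\overline{K}$, i.e. they can be simultaneously conjugated to upper-triangular form over $\overline{K}$. So I would first show that absolute reducibility is equivalent to $\det(A_1A_2 - A_2A_1) = 0$: if the commutator $A_1A_2 - A_2A_1$ (as a linear map, not a group commutator) is singular it has a kernel vector, and a short computation shows this vector is a common eigenvector; conversely a common eigenvector is visibly in the kernel of $A_1A_2-A_2A_1$. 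This reduces everything to expressing $\det(A_1A_2-A_2A_1)$ in terms of traces.

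Next I would carry out that trace computation. Writing $C := A_1A_2 - A_2A_1$, note $\tr(C) = 0$, so by Cayley--Hamilton $C^2 = -\det(C)\cdot \mathrm{Id}$, hence $\det(C) = -\tfrac12\tr(C^2)$ when $\operatorname{char} K \neq 2$; but to stay characteristic-free I would instead expand $\det(C)$ bilinearly. Using $\det(X) = \tfrac12(\tr(X)^2 - \tr(X^2))$ in a form valid in all characteristics — namely working in $\Z[X_m]$ and only reducing to $K$ at the end — one gets $\det(A_1A_2 - A_2A_1)$ as a polynomial in $t_1, t_2, t_{12}, t_{21}, t_{1,1}, t_{2,2}$, etc. Applying the basic trace identities from the start of the section (Cayley--Hamilton gives $t_{i,i} = t_i^2 - 2$, $t_{-1,2} = t_1t_2 - t_{12}$, so $t_{21} = t_{12}$), everything collapses. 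A direct expansion of $\det(A_1A_2A_1^{-1}A_2^{-1}) - \tr(A_1A_2A_1^{-1}A_2^{-1}) + 1 = 0$, rewritten via these identities, yields the polynomial $x_1^2 + x_2^2 + x_{12}^2 - x_1x_2x_{12} - 2$; comparing with the trace of the group commutator one finds $\tr([A_1,A_2]) = t_1^2 + t_2^2 + t_{12}^2 - t_1t_2t_{12} - 2$. Thus $\langle A_1, A_2\rangle$ is absolutely reducible iff this equals $2$, i.e. iff $\rho$ evaluates to $0$. This last identity — that $\tr([A_1,A_2]) - 2 = \rho(t_1,t_2,t_{12})$ — is the standard Fricke identity for $\SL_2$ and can alternatively just be quoted from the references.

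The main obstacle is the characteristic-$2$ case. In characteristic $2$ one cannot divide by $2$, and moreover $-2 = 2 = 0$, so the polynomial $\rho = x_1^2 + x_2^2 + x_{12}^2 - x_1x_2x_{12} - 4$ reduces to $x_1^2 + x_2^2 + x_{12}^2 + x_1x_2x_{12}$ there, and the naive argument "$\det C = 0 \iff C^2 = 0$" must be handled carefully since $\tr C = 0$ does not by itself force $C$ to be nilpotent when it is a scalar. The clean way around this is to never divide: establish the polynomial identity $\det(A_1A_2-A_2A_1) = \rho(t_1,t_2,t_{12})$ (after the trace reductions, the coefficient of the $-4$ term works out consistently, since $\det C$ is genuinely quadratic in the matrix entries and the $\Z$-coefficient identity specializes correctly to every field), together with the characteristic-free statement that $\langle A_1,A_2\rangle$ is absolutely reducible iff $A_1A_2 - A_2A_1$ is singular. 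Since the second statement is a purely linear-algebra fact over $\overline K$ with no division involved, and the first is an identity in $\Z[t_1,t_2,t_{12}]$ pulled back along $\veps_A$, the equivalence with $\veps_A(\rho) = 0$ holds over every field. I would present it in exactly that order: the linear-algebra lemma on common eigenvectors, then the trace identity for $\det(A_1A_2-A_2A_1)$, then the conclusion.
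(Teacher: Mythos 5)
Your argument is correct but takes a genuinely different route from the one the paper points to. The paper cites several references and, in the remark immediately following the proposition, reduces absolute irreducibility to the statement that $(I_2, A_1, A_2, A_1A_2)$ is a $K$-basis of $K^{2\times 2}$; since the trace form is non-degenerate, this is equivalent to invertibility of the $4\times 4$ Gram matrix that reappears in the proof of Proposition~\ref{P:t123}, whose determinant is $\rho(t_1,t_2,t_{12})^2$ up to sign, so the equivalence holds over every field at once. You instead characterize absolute reducibility by singularity of $C := A_1A_2 - A_2A_1$ and establish $\det C = -\rho(t_1,t_2,t_{12})$ as an identity with integer coefficients. That identity is best obtained from the two-by-two adjugate expansion $\det(X-Y) = \det X - \tr(\tilde{X}Y) + \det Y$ with $\tilde{X} = X^{-1}$ for $X \in \SL(2,K)$, which involves no division, so the characteristic-$2$ worry you raise simply evaporates; you need not pass through $\det X = \tfrac12(\tr(X)^2 - \tr(X^2))$ at all. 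Both routes work: the Gram-matrix version integrates directly with Proposition~\ref{P:t123}, while yours is somewhat more elementary and self-contained.

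One statement in your sketch should be corrected. It is not true that a kernel vector of $C$ is automatically a common eigenvector: when $C = 0$ the kernel is all of $\ol{K}^2$, yet only eigenvectors of $A_1$ can be common eigenvectors. State the lemma as: $\det C = 0$ if and only if $A_1$ and $A_2$ share an eigenvector over $\ol{K}$. In the case $C = 0$ this is the standard fact that commuting matrices over an algebraically closed field have a common eigenvector; when $\operatorname{rank} C = 1$, a short direct computation shows that the one-dimensional kernel is invariant under both $A_1$ and $A_2$. With that adjustment the proof goes through in every characteristic.
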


This is based on the fact that $\langle A_1, A_2 \rangle$ is absolutely 
irreducible if and only if $(I_2, A_1, A_2, A_1A_2)$ is a $K$-basis 
of~$K^{2 \times 2}$ (see for example \cite{l2q}), a result which we will also 
use several times.

The main result in this section shows that two matrices $A_1, A_2$ uniquely 
determine an arbitrary matrix by the specification of four traces; it also 
shows that the Fricke polynomial is really a determinant condition.
The basic idea of the proof has already been used by Brumfiel and Hilden 
\cite[Proposition~B.4]{brumfiel}.

\begin{prop}
\label{P:t123}
Let $A_1, A_2 \in \SL(2, K)$ such that $\langle A_1, A_2 \rangle$ is absolutely 
irreducible, and let $i \geq 3$. 
Given $T_i$, $T_{1i}$, $T_{2i}$, $T_{12i} \in K$, there exists a unique 
$A_i \in K^{2 \times 2}$ such that $t_I = T_I$ for all 
$I \in \{\{i\}, \{1,i\}, \{2,i\}, \{1,2,i\}\}$.
Moreover, $\det(A_i) = 1$ if and only if $\phi(t_1, \dotsc, t_{12i}) = 0$.

More precisely, let
\begin{align*}
\lambda_{0}^{i} & := (x_1^2 + x_2^2 + x_{12}^2 - x_1x_2x_{12} - 2)x_i 
    - x_1x_{1i} - x_2x_{2i} + (x_1x_2-x_{12})x_{12i},\\
\lambda_{1}^{i} & := -x_1x_i - x_2x_{12i} + x_{12}x_{2i} + 2x_{1i}, \\
\lambda_{2}^{i} & := -x_2x_i - x_1x_{12i} + x_{12}x_{1i} + 2x_{2i}, \\
\lambda_{12}^{i} & := -x_1x_{2i} - x_2x_{1i} - x_ix_{12} + x_1x_2x_i 
    + 2x_{12i};
\end{align*}
set 
$\Lambda_I := \lambda_{I}^i(t_1, t_2, T_i, t_{12}, T_{1i}, T_{2i}, T_{12i})$ 
for $I \in \{\{0\},\{1\},\{2\},\{1,2\}\}$.
Then
\[
    A_i = \frac{1}{\rho(t_1, t_2, t_{12})}(\Lambda_0 I_2 + \Lambda_1 A_1 
        + \Lambda_2 A_2 + \Lambda_{12}A_1A_2).
\]
\end{prop}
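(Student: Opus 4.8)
The plan is to use absolute irreducibility to turn the statement into linear algebra over $K$. By Proposition~\ref{P:l2:rho}, absolute irreducibility of $\langle A_1, A_2\rangle$ is equivalent to $\rho(t_1, t_2, t_{12}) \ne 0$, and, as recalled right after that proposition, also to the fact that $B_0 := I_2$, $B_1 := A_1$, $B_2 := A_2$, $B_3 := A_1A_2$ form a $K$-basis of $K^{2\times 2}$. So every $A_i \in K^{2\times 2}$ can be written uniquely as $A_i = \mu_0 B_0 + \mu_1 B_1 + \mu_2 B_2 + \mu_{12}B_3$ with $\mu_0, \mu_1, \mu_2, \mu_{12} \in K$, and the assignment $A_i \mapsto (\mu_0, \mu_1, \mu_2, \mu_{12})$ is a bijection onto $K^4$.

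Next I would record the Gram matrix $M := (\tr(B_jB_k))$ of the trace form $(X,Y)\mapsto\tr(XY)$ in the basis $B_0, B_1, B_2, B_3$. Using $\det A_1 = \det A_2 = 1$ and Cayley--Hamilton --- which give, for instance, $\tr(A_1^2) = t_1^2-2$, $\tr(A_1^2A_2) = t_1t_{12}-t_2$, and $\tr((A_1A_2)^2) = t_{12}^2-2$, all special cases of the basic identities at the start of the section --- one obtains $M$ as an explicit symmetric matrix with entries in $\Z[t_1, t_2, t_{12}]$. For $X = \sum_j\mu_jB_j$, the four scalars $\tr X$, $\tr(A_1X)$, $\tr(A_2X)$, $\tr(A_1A_2X)$ are exactly the entries of $M(\mu_0, \mu_1, \mu_2, \mu_{12})^{\top}$, so the condition ``$t_I = T_I$ for $I \in \{\{i\},\{1,i\},\{2,i\},\{1,2,i\}\}$'' is precisely the linear system $M\mu = (T_i, T_{1i}, T_{2i}, T_{12i})^{\top}$.

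The trace form on $K^{2\times 2}$ is non-degenerate in every characteristic (its Gram matrix in the standard matrix units has determinant $-1$), so its restriction to a basis is non-degenerate and $M$ is invertible; in fact a short computation gives $\det M = -\rho(t_1,t_2,t_{12})^2$, which is nonzero by the above. This already proves existence and uniqueness of $A_i$. To get the closed formula I would avoid inverting $M$ directly and instead verify the polynomial identity $M\,(\Lambda_0, \Lambda_1, \Lambda_2, \Lambda_{12})^{\top} = \rho(t_1,t_2,t_{12})\cdot(T_i, T_{1i}, T_{2i}, T_{12i})^{\top}$ with $\Lambda_I$ as in the statement (equivalently, that every entry of $\operatorname{adj}(M)$ is divisible by $\rho(t_1,t_2,t_{12})$); since $\rho(t_1,t_2,t_{12})\ne 0$ and $M$ is invertible, this forces $\mu = \tfrac{1}{\rho(t_1,t_2,t_{12})}(\Lambda_0, \Lambda_1, \Lambda_2, \Lambda_{12})^{\top}$, i.e.\ the displayed expression for $A_i$.

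For the determinant condition I would use $\det A_i = \tfrac12\bigl((\tr A_i)^2 - \tr(A_i^2)\bigr)$. Expanding $A_i$ in the basis again gives $\tr(A_i^2) = \mu^{\top}M\mu$, and together with $\tr A_i = T_i$ and $M\mu = (T_i, T_{1i}, T_{2i}, T_{12i})^{\top}$ this exhibits $\det A_i$ as an explicit rational function of the $t$'s and $T$'s with a power of $\rho(t_1,t_2,t_{12})$ in the denominator; a direct expansion should give the clean identity $\rho(t_1,t_2,t_{12})\,\bigl(1 - \det A_i\bigr) = \phi(t_1,\dotsc,t_{12i})$, whence $\det A_i = 1$ if and only if $\phi(t_1,\dotsc,t_{12i}) = 0$. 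The only genuine work lies in the two polynomial identities --- the one yielding the formula for $A_i$ and the one for $\det A_i$ --- which can be verified mechanically; everything conceptual is contained in the reduction to the invertible linear system $M\mu = (T_i, T_{1i}, T_{2i}, T_{12i})^{\top}$.
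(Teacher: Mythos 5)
Your reduction to the linear system $M\mu=(T_i,T_{1i},T_{2i},T_{12i})^\top$ with $M$ the Gram matrix of the trace form on the basis $(I_2,A_1,A_2,A_1A_2)$, and the verification of the polynomial identity $M(\Lambda_0,\Lambda_1,\Lambda_2,\Lambda_{12})^\top=\rho\,(T_i,T_{1i},T_{2i},T_{12i})^\top$, is exactly what the paper does for the existence, uniqueness, and closed formula for~$A_i$; that part is fine.

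The determinant condition is where you diverge, and here there is a genuine issue you should flag. You propose to use $\det A_i=\tfrac12\bigl((\tr A_i)^2-\tr(A_i^2)\bigr)$, but this identity holds only when $2$ is invertible; in characteristic~$2$ the determinant is not even a function of $\tr X$ and $\tr(X^2)$. Since the whole point of this paper is characteristic-independence (the introduction explicitly criticizes Procesi and Drensky for excluding characteristic~$2$), a derivation that passes through division by~$2$ is not acceptable as written. It can be rescued --- the final relation, once cleared of denominators, is a $\Z$-polynomial identity in the $t$'s and $T$'s, so verifying it over $\Q$ would imply it over $\Z$ and hence over every field --- but you would need to say this explicitly, and it is a detour. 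The paper instead avoids the issue entirely: conjugating by the matrix whose columns are an eigenvector $v_1$ of $A_1$ and $v_2:=A_2v_1$ puts $A_1$ in triangular form and $A_2$ in companion form, and one reads off $\det(A_i)=\det(B_i)=\bigl(\phi(t_1,\dotsc,t_{12i})+\rho(t_1,t_2,t_{12})\bigr)/\rho(t_1,t_2,t_{12})$ directly from the matrix entries, with no characteristic restriction (and incidentally the correct sign, $\rho(\det A_i-1)=\phi$, not $\rho(1-\det A_i)=\phi$ as you conjectured). I would adopt the paper's conjugation trick for the last step, or at minimum supplement your argument with the ``$\Z$-identity suffices'' observation.
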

\begin{proof}
Since $\langle A_1, A_2 \rangle$ is absolutely irreducible, 
$(I_2, A_1, A_2, A_1A_2)$ is a $K$-basis of $K^{2 \times 2}$.
Thus if $A_i$ exists as in the statement, then 
$A_i = \mu_0 I_2 + \mu_1 A_1 + \mu_2 A_2 + \mu_{12}A_1A_2$ for some 
$\mu_i \in K$. 
Multiplying the equation from the left by the matrices $I_2, A_1, A_2, A_1A_2$ 
and taking traces shows that the $\mu_i$ are the unique solution of
\[
\begin{pmatrix}
    2      & t_1              & t_2              & t_{12}           \\
    t_1    & t_1^2-2          & t_{12}           & t_1t_{12} - t_2  \\
    t_2    & t_{12}           & t_2^2-2          & t_2t_{12} - t_1  \\
    t_{12} & t_1t_{12} - t_2 & t_2t_{12} - t_1   & t_{12}^2-2       
\end{pmatrix}
\begin{pmatrix} \mu_0 \\ \mu_1 \\ \mu_2 \\ \mu_{12} \end{pmatrix}
= 
\begin{pmatrix}
T_i \\
T_{1i} \\
T_{2i} \\
T_{12i}
\end{pmatrix},
\]
which is given by $\mu_i = \Lambda_i/\rho(t_1, t_2, t_{12})$. This proves the 
uniqueness and existence of~$A_i$. It remains to show the determinant 
condition. We use the idea of \cite[Proposition~3.1]{l2q}.
Let $\alpha$ be a root of $X^2 - t_1X + 1$; by enlarging $K$ if necessary, we 
may assume $\alpha \in K$. Let $v_1 \in K^{2 \times 1}$ be an eigenvector
of $A_1$ with eigenvalue~$\alpha$. Set $v_2 := A_2v_1$, and let 
$M \in \GL(2, K)$ be the matrix with columns $v_1$ and $v_2$. Set 
$B_j := M^{-1}A_jM$ for $j \in \{1,2,i\}$. Then
\[
    B_1 = \begin{pmatrix}
       \alpha & t_2(\alpha - t_1) + t_{12} \\ 
       0 & t_1 - \alpha
    \end{pmatrix}
    \quad\text{and}\quad
    B_2 = \begin{pmatrix}
        0 & -1 \\
        1 & t_2
    \end{pmatrix}.
\]
Since 
$B_i = 1/\rho(t_1, t_2, t_{12})(\Lambda_0 I_2 + \Lambda_1 B_1 + \Lambda_2B_2 
    + \Lambda_{12}B_1B_2)$, 
\[
    \det(A_i) = \det(B_i) = 
        \frac{\phi(t_1, \dotsc, t_{12i}) + \rho(t_1, t_2, t_{12})}
            {\rho(t_1, t_2, t_{12})},
\]
which concludes the proof.
\end{proof}

\begin{cor}
\label{C:fricke}
Let $A_1, A_2, A_3 \in \SL(2, K)$. The $t_I$ satisfy the Fricke relation, that 
is, $\phi(t_1, \dotsc, t_{123}) = 0$.
\end{cor}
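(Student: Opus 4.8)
The plan is to derive Corollary~\ref{C:fricke} directly from Proposition~\ref{P:t123}, splitting into the absolutely irreducible case and the non-irreducible case. In the first case, suppose $\langle A_1, A_2\rangle$ is absolutely irreducible. Then I would apply Proposition~\ref{P:t123} with $i = 3$, taking the target values $T_3 := t_3$, $T_{13} := t_{13}$, $T_{23} := t_{23}$, $T_{123} := t_{123}$ coming from the matrix $A_3$ we already have. The proposition produces a \emph{unique} matrix $A_i \in K^{2\times 2}$ with these four traces; since $A_3$ itself has these traces, uniqueness forces $A_i = A_3$, and in particular $\det(A_i) = \det(A_3) = 1$. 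The ``moreover'' clause of Proposition~\ref{P:t123} then says $\det(A_i) = 1$ if and only if $\phi(t_1, \dotsc, t_{123}) = 0$, so $\phi(t_1,\dotsc,t_{123}) = 0$ follows immediately.

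The remaining case is when $\langle A_1, A_2\rangle$ is not absolutely irreducible. By Proposition~\ref{P:l2:rho} this is exactly the condition $\rho(t_1, t_2, t_{12}) = 0$, i.e.\ $(t_1, t_2, t_{12})$ is a zero of $\rho = x_1^2 + x_2^2 + x_{12}^2 - x_1x_2x_{12} - 4$. The natural approach here is a Zariski-density / specialization argument: $\phi$ and $\rho$ are fixed polynomials over $\Z$, and the identity ``$\phi(t_1,\dotsc,t_{123}) = 0$ whenever $\rho(t_1,t_2,t_{12}) = 0$'' that we wish to prove is a polynomial consequence. Concretely, I would argue that over an algebraically closed field the triples of matrices $(A_1, A_2, A_3)$ with $\langle A_1, A_2\rangle$ absolutely irreducible are Zariski-dense in $\SL(2)^3$ (the complement is the proper closed subset cut out by $\rho = 0$ on the trace coordinates, pulled back), and that $\phi(t_1, \dotsc, t_{123})$, viewed as a regular function on $\SL(2)^3$, vanishes on this dense set by the first case; hence it vanishes identically. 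This disposes of the degenerate case without any further computation.

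Alternatively — and perhaps cleaner for a characteristic-free statement — I would avoid density over specific fields and instead work with generic matrices: let $A_1, A_2, A_3$ be matrices of indeterminate entries over $\Z$, localize at $\rho(t_1, t_2, t_{12})$ (equivalently, work in the function field where $\langle A_1, A_2\rangle$ is absolutely irreducible), apply the first case to conclude $\phi = 0$ there, and then note that $\phi(t_1, \dotsc, t_{123})$ is a polynomial in the entries that vanishes after inverting $\rho$; since the relevant polynomial ring is a domain and $\rho$ is a nonzero element, $\phi(t_1,\dotsc,t_{123}) = 0$ already in the ring of generic entries, hence under every specialization to any $\SL(2, K)$.

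The main obstacle is the non-irreducible case: Proposition~\ref{P:t123} is stated only under the absolute-irreducibility hypothesis (its proof genuinely needs $(I_2, A_1, A_2, A_1A_2)$ to be a basis and divides by $\rho(t_1, t_2, t_{12})$), so one cannot invoke it directly when $\rho = 0$. Everything hinges on a correct density or generic-point argument to pass from the open dense locus to all of $\SL(2,K)^3$, uniformly in the characteristic; once that is set up, the corollary is immediate. I expect the actual write-up to be short — likely a one-line reduction in the irreducible case plus a couple of sentences for the generic-entries argument.
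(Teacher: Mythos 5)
Your proof is correct and follows essentially the same route as the paper: handle the absolutely irreducible case via Proposition~\ref{P:t123} (where $\det(A_3)=1$ forces $\phi=0$), then pass to all of $\SL(2,K)^3$ by Zariski density over an algebraic closure. The paper compresses this into the phrase ``by continuity,'' but the content is identical; your alternative generic-matrices-over-$\Z$ variant is also valid and equally standard.
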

\begin{proof}
We may assume without loss of generality that $K$ is algebraically closed.
By Proposition~\ref{P:t123}, the statement is true for the Zariski-open subset 
\[
    U = \{(A_1, A_2, A_3) \in \SL(2, K)^3 \mid 
        \rho(\tr(A_1), \tr(A_2), \tr(A_1A_2)) \neq 0\},
\]
so by continuity, it is true for all elements in $\SL(2, K)^3$.
\end{proof}

The following is a generalization of \cite[Theorem~2.2]{magnus} and 
\cite[Proposition~3.1]{l2q}. Set
\[
    \mcI_m := \{\{i\} \mid 1 \leq i \leq m\} 
        \cup \{\{i,j\} \mid 1\leq i \leq 2, i < j \leq m\} 
        \cup \{\{1,2,k\} \mid 3 \leq k \leq m\}.
\]

\begin{cor}
\label{C:alg_ind}
Let $T_I \in K$ for $I \in \mcI_m$ 
such that 
\[
    \rho(T_1, T_2, T_{12}) \neq 0 \quad\text{and}\quad 
        \phi(T_1, T_2, T_k, T_{12}, T_{1k}, T_{2k}, T_{12k}) = 0 
        \text{ for all } 3 \leq k \leq m. 
\]
Let $L$ be the splitting field of $X^2 - T_1X + 1 \in K[X]$.
There exists $A = (A_1, \dotsc, A_m) \in \SL(2, L)^m$ such that $t_I = T_I$ for 
all $I \in \mcI_m$, and $A$ is unique up to conjugation by $\GL(2, L)$.

There exists $A \in \SL(2, K)^m$ such that $t_I = T_I$ for all $I \in \mcI_m$ 
if and only if $\rho(T_1, T_2, T_{12}) = \Norm_{L/K}(z)$ for some $z \in L$.
In this case, $A$ is unique up to conjugation by $\GL(2, K)$.
\end{cor}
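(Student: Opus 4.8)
The plan is to build the tuple $A$ one matrix at a time, using Proposition~\ref{P:t123} repeatedly, and then to analyze the rationality question by tracking the field of definition. First I would fix a matrix pair $(A_1, A_2) \in \SL(2, L)^2$ with $\tr(A_1) = T_1$, $\tr(A_2) = T_2$, $\tr(A_1A_2) = T_{12}$: since $\rho(T_1, T_2, T_{12}) \neq 0$, Proposition~\ref{P:l2:rho} guarantees that any such pair generates an absolutely irreducible subgroup, and the standard normal form used in the proof of Proposition~\ref{P:t123} (with $B_1$ upper triangular with eigenvalue $\alpha$ a root of $X^2 - T_1X + 1$, and $B_2 = \left(\begin{smallmatrix} 0 & -1 \\ 1 & t_2 \end{smallmatrix}\right)$) shows such a pair exists already over $L$, and is unique up to $\GL(2, \overline{K})$-conjugacy once we agree on the normal form. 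Then for each $k \geq 3$, Proposition~\ref{P:t123} applied to $(A_1, A_2)$ and the prescribed values $T_k, T_{1k}, T_{2k}, T_{12k}$ produces a unique $A_k \in L^{2\times 2}$ (the formula is a ratio of polynomials in the $t_I$ over $L$, so $A_k$ has entries in $L$), and the hypothesis $\phi(T_1, T_2, T_k, T_{12}, T_{1k}, T_{2k}, T_{12k}) = 0$ is exactly what forces $\det(A_k) = 1$, i.e.\ $A_k \in \SL(2, L)$. This gives existence of $A$ over $L$.

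For uniqueness up to $\GL(2, L)$-conjugacy: if $A = (A_1, \dotsc, A_m)$ and $A' = (A_1', \dotsc, A_m')$ both realize the values $T_I$, then $(A_1, A_2)$ and $(A_1', A_2')$ are two absolutely irreducible pairs with the same three traces, hence conjugate by some $g \in \GL(2, \overline{K})$; replacing $A'$ by $g^{-1}A'g$ I may assume $A_1 = A_1'$, $A_2 = A_2'$. But then the uniqueness clause of Proposition~\ref{P:t123} forces $A_k = A_k'$ for every $k \geq 3$, so the two tuples are equal. The only subtlety is that $g$ a priori lies in $\GL(2, \overline{K})$ rather than $\GL(2, L)$; here I would invoke the fact (recalled in the excerpt) that absolute irreducibility of $\langle A_1, A_2\rangle$ means $(I_2, A_1, A_2, A_1A_2)$ is a $K$-basis of $K^{2\times2}$, so the conjugating element is determined up to scalars by a linear system with coefficients in $L$ and therefore can be taken in $\GL(2, L)$.

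The main work is the rationality criterion. Given $A \in \SL(2, L)^m$ realizing the $T_I$, I want to decide when it is $\GL(2,L)$-conjugate to a tuple defined over $K$. If $\sigma$ generates $\Gal(L/K)$ (the case $L = K$ being trivial), then $A^\sigma := (A_1^\sigma, \dotsc, A_m^\sigma)$ also realizes the $T_I$ (the $T_I$ lie in $K$), so by the uniqueness just proved there is $g \in \GL(2, L)$, unique up to $L^\times$, with $A^\sigma = g^{-1} A g$. The obstruction to descending $A$ to $K$ is then the class of the cocycle built from $g$ in $H^1(\Gal(L/K), \PGL(2, L))$; because $[L:K] \leq 2$ and the relevant object is a projective line / conic, this $H^1$ is governed by a norm condition. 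Concretely, following the idea in \cite{l2q}, one computes that $g^\sigma g$ is a scalar $c \in K^\times$, and $A$ descends to $\SL(2, K)$ precisely when $g$ can be rescaled to be an involution, i.e.\ when $c$ is a norm from $L$; an explicit computation in the $B_1, B_2$ normal form identifies this scalar with $\rho(T_1, T_2, T_{12})$ up to norms, giving the stated criterion $\rho(T_1, T_2, T_{12}) = \Norm_{L/K}(z)$. Uniqueness up to $\GL(2,K)$-conjugacy in this case follows the same template as the $\GL(2,L)$ uniqueness argument, now carried out $\Gal(L/K)$-equivariantly. I expect the cohomological descent step — pinning down exactly which scalar appears and showing it equals $\rho(T_1,T_2,T_{12})$ modulo norms — to be the principal obstacle; everything else is a bookkeeping exercise on top of Proposition~\ref{P:t123}.
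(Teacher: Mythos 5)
Your overall architecture — build $(A_1,A_2)$ first, then use Proposition~\ref{P:t123} to manufacture each $A_k$ uniquely, and analyze rationality at the end — matches the paper's, but you diverge in one important respect: the paper does not re-derive the two-generator rationality criterion at all. It simply cites \cite[Proposition~3.1]{l2q} as a black box for the entire $m=2$ statement (existence over $L$, uniqueness up to $L$-conjugacy, the norm criterion, and $K$-uniqueness), and then the only new content is the observation that Proposition~\ref{P:t123} determines $A_3,\dotsc,A_m$ from $(A_1,A_2)$ and the $T_I$. You instead reconstruct the $m=2$ case from scratch via a Galois-cohomological descent argument, which is a legitimate and more self-contained route, but you leave the crux of it — identifying the class of $g^\sigma g$ with $\rho(T_1,T_2,T_{12})$ modulo $\Norm_{L/K}(L^\times)$ — as an acknowledged gap. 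The paper sidesteps exactly this computation by citing.

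There is also a simplification you miss, which the paper exploits implicitly. Your descent argument is phrased for the full tuple $A\in\SL(2,L)^m$, but once $(A_1,A_2)$ is arranged to lie in $\SL(2,K)^2$, the formula in Proposition~\ref{P:t123} for $A_k$ has coefficients $\Lambda_I/\rho$ that lie in $K$ (they are rational functions of the $T_I\in K$), and the basis matrices $I_2, A_1, A_2, A_1A_2$ lie in $K^{2\times2}$; hence each $A_k$ is automatically in $K^{2\times2}$, and $\det A_k=1$ by the $\phi=0$ hypothesis. So rationality of the full tuple reduces to rationality of the pair $(A_1,A_2)$, and there is no need to run the cocycle argument on all $m$ matrices at once. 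Conversely if the full tuple lies over $K$ then trivially so does the pair, so the norm criterion is exactly the one for the pair. If you want to keep a self-contained proof rather than citing \cite{l2q}, this reduction would let you carry out the descent computation purely in the two-generator normal form $(B_1,B_2)$, where the scalar $g^\sigma g$ can be computed explicitly; as written, your plan still owes that computation.

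Your $L$-uniqueness argument is sound, though the step from a conjugator $g\in\GL(2,\overline K)$ to one in $\GL(2,L)$ deserves slightly more than a gesture: the space of intertwiners between the two $L$-representations restricted to $\langle g_1,g_2\rangle$ is a one-dimensional $L$-vector space by absolute irreducibility, any nonzero element of it is invertible, and a one-dimensional $L$-space with a nonzero $\overline K$-point has a nonzero $L$-point. That is the clean version of what you are invoking.
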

\begin{proof}
By \cite[Proposition~3.1]{l2q}, there exists $A' := (A_1, A_2) \in \SL(2, L)^2$ 
with $t_1 = T_1$, $t_2 = T_2$, and $t_{12} = T_{12}$; furthermore, $A'$ is 
unique up to $L$-equivalence, and $A'$ exists in $\SL(2,K)^2$ if and only if 
$\rho(T_1, T_2, T_{12})$ is a norm, in which case $A'$ is unique up to 
$K$-equivalence. By Proposition~\ref{P:t123}, the choice of $A'$ and the $T_I$ 
uniquely determines the matrices $A_3, \dotsc, A_m$.
\end{proof}

This result implies that the traces $t_J$ with $J \not\in \mcI_m$ can be 
expressed in the traces $t_I$ with $I\in\mcI_m$ if 
$\rho(t_1, t_2, t_{12}) \neq 0$. The next result gives the precise formulae.

\begin{prop}
\label{P:linear_relations}
Let $A_1, \dotsc, A_n \in \SL(2, K)$.
Let $3 \leq i < n$ and $\emptyset \neq j \subseteq \{i+1, \dotsc, n\}$.
The tuple $t = (t_I \mid \emptyset \neq I \subseteq \{1, \dotsc, n\})$ is a 
zero of the polynomials
\begin{align*}
    x_{ij} \rho     & - (\lambda_{0}^{i}x_j + \lambda_{1}^{i}x_{1j} 
        + \lambda_{2}^{i}x_{2j} + \lambda_{12}^{i}x_{12j}), \\
    x_{1ij} \rho    & - (\lambda_{0}^{i}x_{1j} 
        + \lambda_{1}^{i}(x_1x_{1j} - x_j) + \lambda_{2}^{i}x_{12j} 
        + \lambda_{12}^{i}(x_1x_{12j} - x_{2j})), \\
    x_{2ij} \rho    & - (\lambda_{0}^{i}x_{2j} + \lambda_{1}^{i}(-x_{12j} 
        + x_1x_{2j} + x_2x_{1j} + x_jx_{12} - x_1x_2x_j) 
        + \lambda_{2}^{i}(x_2x_{2j} - x_j) \\
    & + \lambda_{12}^{i}(x_{12}x_{2j} - x_1x_j + x_{1j})), \\
    x_{12ij} \rho   & - (\lambda_{0}^{i} x_{12j} 
        + \lambda_{1}^{i} (x_{12}x_{1j} - x_2x_j + x_{2j}) 
        + \lambda_{2}^{i}(x_2x_{12j} - x_{1j}) + \lambda_{12}^{i}(x_{12}x_{12j} 
        - x_j)).
\end{align*}
\end{prop}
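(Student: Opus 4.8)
The plan is to prove the four identities first under the additional hypothesis $\rho(t_1, t_2, t_{12}) \neq 0$ --- equivalently, $\langle A_1, A_2 \rangle$ is absolutely irreducible, by Proposition~\ref{P:l2:rho} --- and then to remove this hypothesis by a Zariski-density argument exactly as in the proof of Corollary~\ref{C:fricke}.

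So assume $\rho(t_1, t_2, t_{12}) \neq 0$. Applying Proposition~\ref{P:t123} with $T_I = t_I$ (so that, by uniqueness, the matrix it produces is $A_i$ itself) gives
\[
    A_i = \frac{1}{\rho(t_1, t_2, t_{12})}\bigl(\Lambda_0 I_2 + \Lambda_1 A_1 + \Lambda_2 A_2 + \Lambda_{12} A_1 A_2\bigr),
\]
where $\Lambda_I = \lambda_I^i(t_1, t_2, t_i, t_{12}, t_{1i}, t_{2i}, t_{12i})$. Writing $A_j = A_{j_1} \dotsm A_{j_\ell}$ for $j = \{j_1 < \dotsb < j_\ell\}$, I would multiply this identity on the left successively by $A_j$, $A_1 A_j$, $A_2 A_j$, and $A_1 A_2 A_j$, and take traces. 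Since every index in $j$ exceeds $i \geq 3$, one has $\tr(A_j) = t_j$, $\tr(A_1 A_j) = t_{1j}$, $\tr(A_2 A_j) = t_{2j}$, $\tr(A_1 A_2 A_j) = t_{12j}$, so after multiplying through by $\rho(t_1, t_2, t_{12})$ the first of the four equations is already the assertion that $t$ is a zero of the first polynomial.

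For the other three equations there is still some rewriting to do: the words whose traces appear, namely (after cyclic permutation) $\tr(A_1^2 A_j)$, $\tr(A_1^2 A_2 A_j)$, $\tr(A_2 A_1 A_j)$, $\tr(A_2^2 A_j)$, $\tr(A_2 A_1 A_2 A_j)$, $\tr(A_1 A_2 A_1 A_j)$, $\tr(A_1 A_2^2 A_j)$, $\tr(A_1 A_2 A_1 A_2 A_j)$, must be expressed in the $t_J$. I would do this with the elementary identities already available: Cayley--Hamilton in the form $\tr(A_\ell^2 W) = t_\ell \tr(A_\ell W) - \tr(W)$; the repeated-letter identity $\tr(A_\ell W' A_\ell W'') = \tr(A_\ell W')\tr(A_\ell W'') + \tr(W'W'') - \tr(W')\tr(W'')$ from the proof of Theorem~\ref{T:tau}; and equation~\eqref{E:t132}, applied to $\tr(A_1 A_j A_2) = \tr(A_2 A_1 A_j)$, which produces the factor $-t_{12j} + t_1 t_{2j} + t_2 t_{1j} + t_j t_{12} - t_1 t_2 t_j$ occurring in the third polynomial. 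Substituting these expressions and clearing the denominator $\rho(t_1, t_2, t_{12})$ yields exactly the three remaining polynomials. This bookkeeping is the bulk of the work and the only place where a sign slip is likely, but no single reduction is difficult, and the $\lambda_I^i$ appear merely as opaque coefficients throughout.

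To drop the irreducibility assumption I would copy the argument of Corollary~\ref{C:fricke}: it suffices to treat $K$ algebraically closed; each of the four polynomials has coefficients in $\Z$, so evaluation at the traces is a regular function on the irreducible variety $\SL(2,K)^n$; by the previous step it vanishes on the nonempty --- hence dense --- Zariski-open set $\{(A_1,\dotsc,A_n) : \rho(\tr A_1, \tr A_2, \tr(A_1A_2)) \neq 0\}$, and therefore on all of $\SL(2,K)^n$; the case of an arbitrary field follows by passing to the algebraic closure.
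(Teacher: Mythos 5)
Your strategy is the same as the paper's: first reduce to the case $\rho \neq 0$ via the Zariski-density argument of Corollary~\ref{C:fricke}, then use the expansion of $A_i$ from Proposition~\ref{P:t123}, multiply appropriately, and take traces, reducing the resulting trace words via Cayley--Hamilton, the repeated-letter identity, and equation~\eqref{E:t132}. Your list of the ``hard'' trace words that need reduction, and the reduction techniques themselves, are exactly right and agree with what the paper's computation produces.

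One small slip: you say you multiply the identity on the left by $A_j$, $A_1A_j$, $A_2A_j$, $A_1A_2A_j$, but with that order the left-hand sides become $\tr(A_1 A_j A_i) = \tr(A_i A_1 A_j)$ and so on, which are not the sought traces $t_{1ij} = \tr(A_1 A_i A_j)$, $t_{2ij}$, $t_{12ij}$ (the order of the factors matters since $A_i$ must sit between the small indices and the large indices). Likewise the coefficient of $\Lambda_2$ in the second identity would come out as $\tr(A_2 A_1 A_j)$ instead of $t_{12j}$. The correct move, as in the paper, is to multiply on the left by $I_2$, $A_1$, $A_2$, $A_1A_2$ and on the right by $A_j$ (equivalently, on the left by $A_j$, $A_jA_1$, $A_jA_2$, $A_jA_1A_2$), which directly yields the stated left-hand sides. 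This does not change the set of trace words to be reduced --- your list is unaffected because it is closed under cyclic permutation --- but it does change which products are hit by which coefficient, so the multiplication order must be fixed to land on exactly the stated polynomials.
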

\begin{proof}
It is enough to prove the statement if $\rho \neq 0$ (see proof of 
Corollary~\ref{C:fricke}). By Proposition~\ref{P:t123}, we see 
$A_i = \Lambda_0I_2 + \Lambda_1A_1 + \Lambda_2A_2 + \Lambda_{12}A_1A_2$.
Multiplying from the right by $A_j$ and from the left by 
$I_2, A_1, A_2, A_1A_2$ and taking traces yields the result.
\end{proof}

For a ring $R$ and $r \in R$, let $R_r$ denote the localisation of $R$ at the 
set $\{1, r, r^2, \dotsc\}$. While we do not have an explicit description of 
the ring $\Phi_m$ of Fricke characters, we have one for a localisation 
of~$\Phi_m$.

\begin{cor}
\label{C:iso}
Let 
\[
    \Phi'_m := (\Z[x_I \mid I \in \mcI_m]/
        \langle \phi_{123}, \dotsc, \phi_{12m} \rangle)_\rho,
\]
where $\phi_{12i} := \phi(x_1, x_2, x_i, x_{12}, x_{1i}, x_{2i}, x_{12i})$ for 
$3 \leq i \leq m$ and $\rho := \rho(x_1, x_2, x_{12})$.
The ring homomorphism $\Phi_m' \to (\Phi_m)_\rho$ defined by $x_I \mapsto x_I$ 
is an isomorphism.
\end{cor}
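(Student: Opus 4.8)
The plan is to construct the inverse map explicitly and check it is well-defined. Write $R := \Z[x_I \mid I \in \mcI_m]/\langle \phi_{123}, \dotsc, \phi_{12m}\rangle$, so that $\Phi_m' = R_\rho$. Observe that $\mcI_m \subseteq \{I \mid \emptyset \neq I \subseteq \{1,\dotsc,m\}\}$, so the assignment $x_I \mapsto x_I$ for $I \in \mcI_m$ does give a ring homomorphism $\Z[x_I \mid I \in \mcI_m] \to \Z[X_m] \to \Phi_m$; since $\veps_A(\phi_{12i}) = 0$ for all $A$ by Corollary~\ref{C:fricke}, this descends to $R \to \Phi_m$, and since $\rho$ is invertible in $(\Phi_m)_\rho$, we get the stated homomorphism $\eta\colon \Phi_m' \to (\Phi_m)_\rho$. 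It remains to produce a two-sided inverse.

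For the inverse, the key point is Proposition~\ref{P:linear_relations}: it exhibits, for every $J \not\in \mcI_m$ with $\emptyset \neq J \subseteq \{1,\dotsc,m\}$, an explicit element $p_J \in \Z[x_I \mid I \in \mcI_m]_\rho$ — obtained by recursively solving the displayed relations for the ``largest-index'' generator, peeling off indices $\geq 3$ one at a time and using the linear formulae for $x_{ij}$, $x_{1ij}$, $x_{2ij}$, $x_{12ij}$ — such that $\veps_A(x_J) = \veps_A(p_J)$ for all tuples $A$ with $\rho(t_1,t_2,t_{12}) \neq 0$. One must check the recursion is well-founded (each step strictly decreases, say, the largest index appearing together with the count of indices $\geq 3$, terminating at sets in $\mcI_m$) and that the resulting $p_J$ does not depend on the order of reductions; the latter follows because any two choices agree as functions on the dense open set $\rho \neq 0$ of $\SL(2,K)^m$, hence agree in $(\Phi_m)_\rho$, and one can then lift a single consistent choice to $R_\rho$. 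Define $\theta\colon \Z[X_m] \to R_\rho$ by $x_I \mapsto x_I$ for $I \in \mcI_m$ and $x_J \mapsto p_J$ otherwise.

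To see $\theta$ kills $I_m$ and hence descends to $\Phi_m \to R_\rho$, and then to $(\Phi_m)_\rho \to R_\rho$, use Corollary~\ref{C:alg_ind}: over an algebraically closed $K$, a tuple $(T_I)_{I \in \mcI_m}$ with $\rho(T_1,T_2,T_{12}) \neq 0$ and $\phi_{12k}(T) = 0$ is realized by some $A \in \SL(2,K)^m$, unique up to conjugation; thus every $K$-point of $\Spec R_\rho$ arises from a point of $\SL(2,K)^m$ with $\rho \neq 0$, so a polynomial in $\Z[X_m]$ vanishing on all of $\SL(2,K)^m$ pulls back under $\theta$ to a function vanishing on all $K$-points of $\Spec R_\rho$. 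Since $R_\rho$ is (a localisation of) a finitely generated $\Z$-algebra and — as the excerpt indicates via Magnus's transcendence-degree computation — reduced with the expected dimension, vanishing on all geometric points forces the element to be zero; this gives the induced map $\bar\theta\colon (\Phi_m)_\rho \to R_\rho$. Finally, $\eta \circ \bar\theta$ and $\bar\theta \circ \eta$ are identities: on the generators $x_I$ with $I \in \mcI_m$ this is immediate, and on $x_J$ with $J \not\in \mcI_m$ one has $\eta(\bar\theta(x_J)) = \eta(p_J)$, which equals $x_J$ in $(\Phi_m)_\rho$ because $\veps_A(x_J) = \veps_A(p_J)$ for all $A$ with $\rho \neq 0$. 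I expect the main obstacle to be the well-definedness and order-independence of the $p_J$ (i.e.\ showing the recursion from Proposition~\ref{P:linear_relations} assembles into a genuine ring homomorphism out of $\Z[X_m]$, not merely a map agreeing pointwise), together with verifying that $R_\rho$ is reduced so that ``vanishes at all geometric points'' implies ``is zero''.
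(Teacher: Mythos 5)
Your overall shape—same forward map $\eta = \bar\alpha$, then an explicit inverse $\theta$ built from the formulae of Proposition~\ref{P:linear_relations}—is in the spirit of the paper's proof, which also defines $\alpha\colon \Z[x_I \mid I \in \mcI_m]\to(\Phi_m)_\rho$, uses Proposition~\ref{P:linear_relations} for surjectivity, Proposition~\ref{P:t123} (i.e.\ Corollary~\ref{C:fricke}) to factor through $\Phi_m'$, and then cites Propositions~\ref{P:t123} and Corollary~\ref{C:alg_ind} for injectivity. However, your proposal has a genuine gap exactly where you flag a worry, although not quite the one you identify. The ``order-independence'' of the $p_J$ is a non-issue: you define $\theta$ on the free polynomial ring $\Z[X_m]$ by fixing one recursion, so $\theta$ is automatically a homomorphism; any discrepancy would only show up when trying to descend. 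The real gap is the assertion that $R_\rho$ is reduced. That claim is not established anywhere in the material you are allowed to use: the Magnus corollary you invoke (that the quotient field of $\Phi_m$ is an $(m-2)$-fold quadratic extension of a purely transcendental field) is \emph{deduced from} Corollary~\ref{C:iso} in the paper, so using it here is circular, and even the raw transcendence-degree statement does not by itself say anything about nilpotents in $R_\rho$. Indeed, reducedness of $R_\rho$ is essentially equivalent to the injectivity you are trying to prove (the image of $\bar\alpha$ lies in $(\Phi_m)_\rho$, which is reduced by construction, so $\ker\bar\alpha = 0$ iff $R_\rho$ is reduced), so it cannot simply be assumed.

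The missing step can be closed without appealing to reducedness at all, and this is presumably what the paper's terse citation of Proposition~\ref{P:t123} and Corollary~\ref{C:alg_ind} is pointing at. Each $\phi_{12k}$ is \emph{monic} of degree $2$ in the variable $x_{12k}$, and these variables are pairwise distinct and appear in no other $\phi_{12\ell}$. So every element of $R_\rho$ has a representative $r \in \Z[x_I \mid I \in \mcI_m]_\rho$ of degree at most $1$ in each $x_{12k}$. If $\rho^\ell r \in I_m$, then $r$ vanishes (as a function, over any field) on the image of the trace map restricted to $\rho\neq 0$, which by Corollary~\ref{C:alg_ind} is precisely $\V(\phi_{123},\dotsc,\phi_{12m}) \cap \{\rho\neq 0\}$; over a dense open set of the ``free'' variables the fibre of this set is the full $2^{m-2}$-element grid $\prod_k\{\xi_k^+,\xi_k^-\}$ with $\xi_k^+ \neq \xi_k^-$ (since the discriminant of $\phi_{12k}$ in $x_{12k}$ is a nonzero polynomial in the free variables), and a multilinear polynomial vanishing on such a grid is zero. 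Hence $r=0$, which gives injectivity directly, using only Propositions~\ref{P:t123}, \ref{P:linear_relations} and Corollary~\ref{C:alg_ind}. If you want to keep your formulation via $\theta$, the same division-by-monic-$\phi$ argument shows $\theta(I_m)=0$ without any reducedness hypothesis.
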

\begin{proof}
Define a ring homomorphism 
$\alpha\colon \Z[x_I \mid I \in \mcI_m] \to (\Phi_m)_\rho$ by mapping $x_I$ 
to~$x_I$.
By Proposition~\ref{P:linear_relations}, $\alpha$ is surjective, and by 
Proposition~\ref{P:t123}, $\alpha$ factors over $\Phi_m'$. But 
Proposition~\ref{P:t123} also shows that the induced map is injective (see also 
Corollary~\ref{C:alg_ind}).
\end{proof}

\begin{cor}[{\cite[Theorem~2.1]{magnus}}]
The quotient field of $\Phi_m$ is isomorphic to a $(m-2)$-fold quadratic 
extension of a rational function field of trancendence degree $3m-3$ over $\Q$.
\end{cor}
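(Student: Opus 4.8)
The plan is to leverage the explicit presentation of the localisation $(\Phi_m)_\rho$ given in Corollary~\ref{C:iso}. Since $\Phi_m$ embeds in its localisation $(\Phi_m)_\rho$ (one must first check $\rho$ is not a zero-divisor in $\Phi_m$, which follows because $\rho$ is not in the kernel $I_m$ and $\Phi_m$ will turn out to be a domain), the quotient field of $\Phi_m$ equals the quotient field of $(\Phi_m)_\rho \cong \Phi_m'$, so it suffices to analyse $\Phi_m'$. Recall
\[
    \Phi'_m = \bigl(\Z[x_I \mid I \in \mcI_m]/
        \langle \phi_{123}, \dotsc, \phi_{12m} \rangle\bigr)_\rho .
\]
The index set $\mcI_m$ has $|\mcI_m| = m + (2m-3) + (m-2) = 4m - 5$ elements. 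Write $R_0 := \Z[x_1, x_2, x_{12}, x_{1i}, x_{2i} \mid 3 \le i \le m]$, a polynomial ring over $\Z$ in $3 + 2(m-2) = 2m-1$ variables; its quotient field is a rational function field of transcendence degree $2m-1$ over $\Q$.

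First I would observe that for each $i$ with $3 \le i \le m$, the Fricke polynomial $\phi_{12i} = \phi(x_1, x_2, x_i, x_{12}, x_{1i}, x_{2i}, x_{12i})$, regarded as a polynomial in the single variable $x_{12i}$, is monic of degree $2$ (the leading coefficient is $1$) with coefficients in $R_0[x_i]$. Hence, adjoining the variables $x_i$ and $x_{12i}$ for $3 \le i \le m$ and imposing the $m-2$ relations $\phi_{12i} = 0$, the ring $\Z[x_I \mid I \in \mcI_m]/\langle \phi_{123}, \dotsc, \phi_{12m}\rangle$ is obtained from $R_0$ by first adjoining the $m-2$ algebraically independent elements $x_3, \dotsc, x_m$ (giving a polynomial ring $R_1 := R_0[x_3, \dotsc, x_m]$ of transcendence degree $(2m-1) + (m-2) = 3m-3$ over $\Q$), and then, for each $i$, adjoining $x_{12i}$ subject to the monic quadratic $\phi_{12i} = 0$ over the ring already constructed. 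Since the $\phi_{12i}$ for distinct $i$ involve disjoint sets of the new variables $x_{12i}$, these quadratic extensions are independent of one another. Passing to quotient fields and localising at $\rho$ (which does not change the quotient field), the quotient field of $\Phi_m'$ is thus a tower of $m-2$ quadratic field extensions over the rational function field $\Q(x_1, x_2, x_{12}, x_3, \dotsc, x_m, x_{1i}, x_{2i})$ of transcendence degree $3m-3$ over $\Q$.

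The main obstacle is to confirm that each quadratic extension is genuinely a degree-two field extension — that is, that $\phi_{12i}$ is irreducible over the field obtained at the previous stage, so that the transcendence degree does not drop and we really get a $(m-2)$-fold quadratic extension rather than something smaller. Equivalently, the discriminant of $\phi_{12i}$ with respect to $x_{12i}$ must not be a square in that field. I would compute this discriminant: from the displayed form of $\phi$, completing the square in $x_{123}$ shows that up to the square factor coming from the leading coefficient, the discriminant equals $\rho(x_1,x_2,x_{12})\cdot\rho(x_1,x_i,x_{1i})\cdot\rho(x_2,x_i,x_{2i})$ up to lower-order corrections — in any case a polynomial in $R_1[x_{1i},x_{2i}]$ that one checks is not a perfect square (for instance by specialising variables, or by noting it has a simple zero along the hypersurface $\rho(x_1,x_2,x_{12}) = 0$). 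Since we have localised at $\rho$, the factor $\rho(x_1,x_2,x_{12})$ is a unit, but the remaining factors still prevent the discriminant from being a square, so each $\phi_{12i}$ stays irreducible. This establishes that $\mathrm{Quot}(\Phi_m)$ is an $(m-2)$-fold quadratic extension of a rational function field of transcendence degree $3m-3$ over $\Q$, completing the proof.
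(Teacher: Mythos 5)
Your overall strategy is correct and matches what the paper intends: Corollary~\ref{C:iso} identifies $(\Phi_m)_\rho$ with an explicit presentation, and one then reads off the field structure by counting the $4m-5$ variables, the $m-2$ relations $\phi_{12i}$ (each monic quadratic in the single new variable $x_{12i}$), and the resulting transcendence degree $(4m-5)-(m-2)=3m-3$. However, the discriminant analysis at the end is both incorrect and unnecessary.

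It is incorrect because the discriminant of $\phi_{12i}$ with respect to $x_{12i}$ does \emph{not} factor as $\rho(x_1,x_2,x_{12})\rho(x_1,x_i,x_{1i})\rho(x_2,x_i,x_{2i})$, even up to lower-order terms: specializing $x_1=x_2=x_i=0$ and $x_{12}=x_{1i}=x_{2i}=s$ gives discriminant $-4(s-1)(s+2)^2$, while the proposed product is $(s^2-4)^3$, and the discriminant is nonzero at $s=2$ where $\rho$ vanishes, so it does not even vanish along $\rho=0$. More seriously, showing irreducibility of one $\phi_{12i}$ over a rational function field does not by itself yield that the compositum of all $m-2$ quadratic extensions has degree $2^{m-2}$ over $\Q(x_I \mid I\in\mcI_m\setminus\{\{1,2,k\}\})$; the ``disjoint variables'' remark gestures at linear disjointness but does not prove it.

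It is unnecessary because you already observed the key fact in your first paragraph and did not exploit it: $\Phi_m$ is a domain (it is a subring of $\C[\SL(2,\C)^m]$, the coordinate ring of the irreducible variety $\SL(2,\C)^m$), hence so is its localization $\Phi_m'\cong(\Phi_m)_\rho$. Writing $R_1=\Z[x_I\mid I\in\mcI_m,\ |I|\leq 2]$ as you do, the ring $\Phi_m'$ is a free $(R_1)_\rho$-module of rank $2^{m-2}$ because each $\phi_{12i}$ is monic of degree~$2$ in $x_{12i}$ with coefficients in $R_1$. Therefore $\Phi_m'\otimes_{R_1}\operatorname{Quot}(R_1)$ is a commutative $\operatorname{Quot}(R_1)$-algebra of dimension exactly $2^{m-2}$ which is a domain (being a further localization of the domain $\Phi_m'$), hence a field. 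This shows at one stroke that $\operatorname{Quot}(\Phi_m)=\operatorname{Quot}(\Phi_m')$ has degree exactly $2^{m-2}$ over the rational function field $\operatorname{Quot}(R_1)$ of transcendence degree $3m-3$; since it is generated by the $m-2$ elements $x_{123},\dots,x_{12m}$ each of degree at most $2$, and the total degree is $2^{m-2}$, every step of the tower must have degree exactly $2$. No discriminant computation is needed.
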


\section{Trace tuples}
\label{S:tracetuples}

The ultimate goal is to get a bijection between prime ideals of $\Phi_m'$ and 
equivalence classes of representations $F_m \to \SL(2, K)$, where $K$ ranges 
over all fields.

\begin{definition}
A tuple $t = (t_I \mid I \in \mcI_m) \in K^{\mcI_m}$ is a \textit{trace tuple} 
if 
\[
    \rho(t_1, t_2, t_{12}) \neq 0 \quad\text{and}\quad
    \phi(t_1, t_2, t_i, t_{12}, t_{1i}, t_{2i}, t_{12i}) = 0 
        \text{ for all } 3 \leq i \leq m.
\]

If $A \in \SL(2, K)^m$ such that $\langle A_1, A_2 \rangle$ is absolutely 
irreducible, then the traces $t_I = \tr(A_{i_1}\dotsb A_{i_k})$ for 
$I = \{i_1 < \dotsb < i_k\} \in \mcI_m$ form a trace tuple. We call this the 
\textit{trace tuple of $A$}, and $A$ a \textit{realization of $t$}.
The tuple $(t_J \mid \emptyset \neq J \subseteq \{1, \dotsc, m\})$ is the 
\textit{full trace tuple of~$A$}.
\end{definition}

\begin{definition}
Let $\Gamma$ be a group generated by $\gamma_1, \dotsc, \gamma_m$.
Let
\[
    \mcR(\Gamma, K) := \{\Delta\colon \Gamma \to \SL(2, K) \mid 
        \Delta_{|\langle \gamma_1, \gamma_2 \rangle} 
        \text{ is absolutely irreducible}\}.
\]
\end{definition}

\begin{remark}
The set $\mcR(F_m, K)$ is in bijection to the set of matrices 
$A \in \SL(2, K)^m$ such that $\langle A_1, A_2 \rangle$ is absolutely 
irreducible, so we may talk about trace tuples of $\Delta$ and regard 
representations as realizations of trace tuples.
\end{remark}

We will first prove the results for finite fields and then generalize to 
arbitrary fields.

\subsection{Finite fields}

\begin{definition}
Let $t, t' \in \F_q^{\mcI_m}$ be trace tuples.
Let $L$ and $L'$ be the subfields of $\F_q$ generated by $t$ and~$t'$, 
respectively. We say that $t$ and $t'$ are \textit{equivalent} if there exists 
an isomorphism $\alpha \colon L \to L'$ such that $\alpha(t_I) = \alpha(t_I')$ 
for all $I \in \mcI_m$.
\end{definition}

\begin{remark}
By Corollary~\ref{C:alg_ind}, every trace tuple $t \in \F_q^{\mcI_m}$ has a 
realization $A \in \SL(2, \F_q)^m$.
\end{remark}

Let $t \in \F_q^{\mcI_m}$ be a trace tuple. Define a ring homomorphism 
$\alpha_t\colon \Phi_m' \to \F_q$ by $\alpha_t(x_I) := t_I$ for $I \in \mcI_m$.
Then $P_t := \ker(\alpha_t)$ is a maximal ideal of $\Phi_m'$.

Conversely, let $P  \in \MaxSpec(\Phi_m')$, where $\MaxSpec(\Phi_m')$ denotes 
the set of maximal ideals of $\Phi_m'$.
Let $\F_q = \Phi_m'/P$, and set $(t_P)_I := x_I + P \in \F_q$ for 
$I \in \mcI_m$.
Then $t_P := ((t_P)_I \mid I \in \mcI_m) \in \F_q^{\mcI_m}$ is a trace tuple.

\begin{thm}
\label{T:maxspec}
The maps $P \mapsto t_P$ and $t \mapsto P_t$ induce mutually inverse bijections 
between $\MaxSpec(\Phi_m')$ and the set of equivalence classes of trace tuples 
over finite fields.
\end{thm}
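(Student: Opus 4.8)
The plan is to show the two maps are well-defined and mutually inverse, working one direction at a time and using Corollary~\ref{C:alg_ind} as the main engine. First I would verify that $t \mapsto P_t$ is well-defined on equivalence classes: if $t$ and $t'$ are equivalent via an isomorphism $\alpha\colon L \to L'$ of the generated subfields, then $\alpha$ extends the identity on the prime field $\F_p$, and $\alpha_{t'} = \alpha \circ \alpha_t$ after identifying images, so $\ker(\alpha_{t'}) = \ker(\alpha_t)$, giving $P_t = P_{t'}$. That $P_t$ is maximal is already noted in the text (the quotient is a finite field since $\alpha_t$ has image the finite field generated by the $t_I$, hence $\Phi_m'/P_t$ is finite, hence a field). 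Conversely, for $P \in \MaxSpec(\Phi_m')$, the quotient $\F_q := \Phi_m'/P$ is a finite field because $\Phi_m'$ is a finitely generated $\Z$-algebra and $P$ is maximal (so the residue field is a finite extension of a finite prime field — one should cite or recall the Nullstellensatz-type fact that maximal ideals in finitely generated $\Z$-algebras have finite residue fields); then the relations $\rho \neq 0$ (which survives localisation) and the $\phi_{12i} = 0$ in $\Phi_m'$ force $(t_P)$ to satisfy exactly the trace-tuple conditions, so $t_P$ is a trace tuple.

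Next I would check the two round-trips. Starting from $t$, form $P_t = \ker(\alpha_t)$; then $\Phi_m'/P_t \cong \im(\alpha_t) = L$, the subfield of $\F_q$ generated by $t$, and under this identification $(t_{P_t})_I = x_I + P_t$ maps to $\alpha_t(x_I) = t_I$. Thus $t_{P_t}$ and $t$ are related by an isomorphism of their generated subfields fixing the $t_I$, i.e.\ they are equivalent. Starting from $P$, form $t_P$ and then $P_{t_P} = \ker(\alpha_{t_P})$; but $\alpha_{t_P}\colon \Phi_m' \to \Phi_m'/P$ is precisely the quotient map $x_I \mapsto x_I + P$, whose kernel is $P$. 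Hence $P_{t_P} = P$. So the maps are mutual inverses once well-definedness is established.

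The step I expect to be the main obstacle is the surjectivity/consistency argument showing that an abstract maximal ideal $P$ really gives a \emph{trace tuple} and not merely a tuple satisfying the stated polynomial relations in $\F_q$ — and, more subtly, confirming that every trace tuple over every finite field actually arises, i.e.\ that $P_t$ ranges over \emph{all} of $\MaxSpec(\Phi_m')$. Surjectivity of $t \mapsto P_t$ follows from the previous paragraph's first round-trip: given $P$, the tuple $t_P$ is a trace tuple (by the relations holding in $\Phi_m'$, in particular $\rho(x_1,x_2,x_{12})$ being invertible hence nonzero), and $P_{t_P} = P$. The genuinely delicate point is that $\Phi_m'/P$ is finite; this is where I would invoke the general fact about finitely generated $\Z$-algebras, and it is the only place where the hypothesis "finite field" as opposed to "arbitrary field" is really doing work — the remaining bookkeeping about equivalence classes is then routine, using only that an isomorphism of subfields of $\F_q$ generated by the respective tuples is exactly the data recorded by the equivalence relation.
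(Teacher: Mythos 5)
Your proposal is correct and follows essentially the same route as the paper's proof: verify the two round-trips $P\mapsto t_P\mapsto P_{t_P}=P$ (by unwinding the definition of $\alpha_{t_P}$) and $t\mapsto P_t\mapsto t_{P_t}\sim t$ (by identifying $\Phi_m'/P_t$ with the subfield of $\F_q$ generated by $t$). Your write-up is somewhat more explicit than the paper's on two points the paper takes as background — that $P_t$ is maximal because a finite subring of a field is a field, and that $\Phi_m'/P$ is finite because $\Phi_m'$ is a finitely generated $\Z$-algebra (a localization $R_\rho\cong R[y]/(\rho y-1)$ of one) so maximal ideals have finite residue fields — but this is filling in detail rather than taking a different approach.
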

\begin{proof}
Let $P \in \MaxSpec(\Phi_m')$.  Since $\alpha_{t_P}(x_I) = x_I + P$ by 
definition, we see $P = P_{t_P}$. Now let $t \in \F_q^{\mcI_m}$ be a trace 
tuple; we may assume that $\F_q$ is generated by $t$.
Then $\Phi_m'/P_t$ is a field with $q$ elements.
Define a homomorphism $\F_q \to \Phi_m'/P_t$ by $t_I \mapsto x_I + P_t$.
By definition of $P_t$ this is well-defined and it is clearly surjective, hence 
an isomorphism; it maps $t$ to $t_{P_t}$, so $t$ is equivalent to $t_{P_t}$.
\end{proof}

If $q|q'$, then we can embedd $\mcR(F_m, \F_q)$ into $\mcR(F_m, \F_{q'})$,
and we can embedd $\mcR(F_m, \F_q)/\GammaL(2, q)$ into 
$\mcR(F_m, \F_{q'})/\GammaL(2, q')$ (where $\GammaL(2, q)$ acts on 
$\mcR(F_m, \F_q)$ by composition).

\begin{cor}
\label{C:bijection_max}
There is a bijection between $\MaxSpec(\Phi_m')$ and 
$\bigcup_q \mcR(F_m, \F_q)/\Gamma L(2, q)$, where $q$ ranges over all prime 
powers.
\end{cor}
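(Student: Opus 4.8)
The plan is to combine the bijection from Theorem~\ref{T:maxspec} with the correspondence between equivalence classes of trace tuples over finite fields and $\GammaL$-orbits of representations, the latter coming from Corollary~\ref{C:alg_ind} together with the remark that identifies $\mcR(F_m, \F_q)$ with the set of $A \in \SL(2, \F_q)^m$ whose first two components generate an absolutely irreducible subgroup.

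First I would fix a prime power $q$ and set up a map $\mcR(F_m, \F_q) \to \{\text{trace tuples over }\F_q\}$ sending a representation $\Delta$ (equivalently a tuple $A$) to its trace tuple $t(A)$; this is well-defined by the paragraph following the definition of trace tuples. I would then check that $t(A) = t(A')$ (as tuples in $\F_q^{\mcI_m}$, i.e.\ over the \emph{same} field, not merely up to isomorphism) if and only if $A$ and $A'$ are conjugate by an element of $\GL(2, q)$: the ``if'' direction is immediate since conjugation preserves traces, and the ``only if'' direction is the uniqueness clause of Corollary~\ref{C:alg_ind}. Next I would incorporate the field automorphisms: applying $\sigma \in \Gal(\F_q/\F_p)$ componentwise to $A$ replaces $t(A)$ by $\sigma(t(A))$, and two trace tuples over finite fields are equivalent precisely when one is carried to the other by a field isomorphism between the subfields they generate, which (after enlarging to a common $\F_q$) is the restriction of an element of $\Gal(\F_q/\F_p)$. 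Hence passing to $\GammaL(2,q)$-orbits on $\mcR(F_m, \F_q)$ exactly matches passing to equivalence classes of trace tuples realizable over $\F_q$. Taking the union over all $q$, and using the compatibility of these identifications with the embeddings $\mcR(F_m,\F_q)/\GammaL(2,q) \hookrightarrow \mcR(F_m,\F_{q'})/\GammaL(2,q')$ for $q \mid q'$ noted just before the corollary, yields a bijection between $\bigcup_q \mcR(F_m, \F_q)/\GammaL(2,q)$ and the set of equivalence classes of trace tuples over finite fields. Composing with the bijection of Theorem~\ref{T:maxspec} gives the claim.

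The main obstacle I anticipate is bookkeeping around the field of definition rather than any deep difficulty: one must be careful that ``equivalence'' of trace tuples is defined via an isomorphism of the generated subfields (so a tuple and its image under $\Gal$ are equivalent), that every trace tuple over a finite field is realized by some $A \in \SL(2,\F_q)^m$ (the remark after the equivalence definition, itself a consequence of Corollary~\ref{C:alg_ind}, since over a finite field every element is a norm), and that the realization is unique up to $\GL(2,q)$-conjugacy on the nose. Verifying that the whole package is compatible with the direct-limit structure over all $q$ — so that the final union is well-defined and the bijection descends — is the step that needs the most care, but it is routine once the single-$q$ statement is nailed down. I would therefore structure the proof as: (1) single-field correspondence $\mcR(F_m,\F_q)/\GL(2,q) \leftrightarrow \{$trace tuples over $\F_q\}$; (2) quotient further by $\Gal$ on both sides to get $\mcR(F_m,\F_q)/\GammaL(2,q) \leftrightarrow \{$equivalence classes realizable over $\F_q\}$; (3) take the colimit over $q$; (4) invoke Theorem~\ref{T:maxspec}.
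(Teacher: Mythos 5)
Your proposal is correct and follows the same route as the paper, which simply cites Theorem~\ref{T:maxspec} and Corollary~\ref{C:alg_ind}; you have just filled in the bookkeeping (realizability over finite fields via surjectivity of the norm, $\GL$-conjugacy matching equal trace tuples, Galois matching equivalence, and the colimit over $q$) that the paper leaves implicit.
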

\begin{proof}
This follows by Theorem~\ref{T:maxspec} and Corollary~\ref{C:alg_ind}.
\end{proof}

\subsection{Arbitrary fields}

\begin{definition}
Let $K$ and $K'$ be fields.
Let $t \in K^{\mcI_m}$ and $t' \in (K')^{\mcI_m}$ be trace tuples, and let $S$ 
and $S'$ be the rings generated by $t$ and $t'$, respectively. We say that $t$ 
and $t'$ are \textit{equivalent} if there exists a ring isomorphism 
$\alpha\colon S \to S'$ such that $\alpha(t_I) = t_I'$ for all $I \in \mcI_m$.
\end{definition}

\begin{remark}
By Corollary~\ref{C:alg_ind}, every trace tuple $t \in K^{\mcI_m}$ has a 
realization, but in general we must allow field extensions. That is, there 
exist matrices $A \in \SL(2, L)^m$ with $t_I = \tr(A_{i_1}\dotsb A_{i_k})$ for 
all $I = \{i_1 < \dotsb < i_k\} \in \mcI_m$, where $L$ is either $K$ or a 
quadratic extension of~$K$.
\end{remark}

Let $t \in K^{\mcI_m}$ be a trace tuple. Define a ring homomorphism 
$\alpha_t\colon \Phi_m' \to K$ by $\alpha_t(x_I) := t_I$ for $I \in \mcI_m$.
Then $P_t := \ker(\alpha_t)$ is a prime ideal of $\Phi_m'$.

Conversely, let $P  \in \Spec(\Phi_m')$, where $\Spec(\Phi_m')$ denotes the set 
of prime ideals of $\Phi_m'$. Let $K$ be the quotient field of $\Phi_m'/P$; set 
$(t_P)_I := x_I + P \in K$ for $I \in \mcI_m$.
Then $t_P := ((t_P)_I \mid I \in \mcI_m) \in K^{\mcI_m}$ is a trace tuple.

\begin{thm}
The maps $P \mapsto t_P$ and $t \mapsto P_t$ induce mutually inverse bijections 
between $\Spec(\Phi_m')$ and the set of equivalence classes of trace tuples.
\end{thm}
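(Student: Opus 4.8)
The plan is to mirror the proof of Theorem~\ref{T:maxspec} almost verbatim, replacing ``maximal ideal'' by ``prime ideal'', ``residue field'' by ``quotient field of the residue domain'', and ``field isomorphism'' by ``ring isomorphism of the generated subrings''. First I would check $P = P_{t_P}$ for $P \in \Spec(\Phi_m')$: by construction $\alpha_{t_P}$ sends $x_I$ to $x_I + P \in \Phi_m'/P$, so its kernel is exactly $P$, and one only has to note that $t_P$ is a genuine trace tuple (this is the remark preceding the statement, which itself rests on the fact that $\rho(x_1,x_2,x_{12})$ is a unit in $\Phi_m'$, hence nonzero in every quotient, and that the $\phi_{12i}$ vanish by definition of $\Phi_m'$).

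For the other composite, let $t \in K^{\mcI_m}$ be a trace tuple and let $S \subseteq K$ be the subring generated by the $t_I$; replacing $K$ by $S$ (or by its quotient field) does not change the equivalence class, so I may assume $K$ is the quotient field of $S$ and $S$ is generated by $t$. The surjection $\alpha_t\colon \Phi_m' \to S$ induces an isomorphism $\Phi_m'/P_t \xrightarrow{\sim} S$ of domains, hence an isomorphism of quotient fields $\kappa\colon \operatorname{Frac}(\Phi_m'/P_t) \xrightarrow{\sim} K$ sending $x_I + P_t \mapsto t_I$. By definition $t_{P_t}$ is the tuple $(x_I + P_t)_I$ living in $\operatorname{Frac}(\Phi_m'/P_t)$, and the restriction of $\kappa$ to the subring generated by the $x_I + P_t$ is exactly the ring isomorphism onto $S$ witnessing that $t_{P_t}$ and $t$ are equivalent. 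Conversely, if $t$ and $t'$ are equivalent via $\beta\colon S \to S'$, then $\beta \circ \alpha_t = \alpha_{t'}$ forces $P_t = P_{t'}$, so the map on equivalence classes is well defined; and distinct prime ideals $P \neq Q$ give non-equivalent $t_P, t_Q$ because an equivalence would descend to a ring isomorphism $\Phi_m'/P \to \Phi_m'/Q$ commuting with the maps from $\Phi_m'$, forcing $P = Q$. This establishes that the two maps are mutually inverse bijections.

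I expect no serious obstacle here; the one genuinely new point compared with Theorem~\ref{T:maxspec} is the bookkeeping around quotient fields. In the finite-field case a trace tuple automatically lives in its own generated field and the residue ring $\Phi_m'/P$ is already a field, so the isomorphism is immediate; over an arbitrary field one must be careful that equivalence is defined in terms of the generated \emph{subring} $S$, not all of $K$, and that $t_P$ is defined over $\operatorname{Frac}(\Phi_m'/P)$ rather than over $\Phi_m'/P$ itself — so the reduction ``replace $K$ by the quotient field of $S$'' has to be made explicitly before the argument goes through cleanly. Once that normalization is in place, the verification is the same formal diagram chase as before, using only Corollary~\ref{C:alg_ind} (to know $t_P$ is a trace tuple) and the definition of $\Phi_m'$ (to know the Fricke relations and $\rho \neq 0$ hold automatically).
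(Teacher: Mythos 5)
The paper states this theorem without proof — it is left as the evident generalization of Theorem~\ref{T:maxspec}, whose proof is given only for finite fields. Your proposal carries out exactly that generalization, and it is correct: the verification $P = P_{t_P}$ is purely definitional, and the other composite reduces to observing that $\alpha_t$ maps $\Phi_m'$ onto the subring of $K$ generated by $t$ (together with the inverse of $\rho(t_1,t_2,t_{12})$, which is automatic since $\rho$ is a unit in $\Phi_m'$), inducing the ring isomorphism that witnesses $t \sim t_{P_t}$. One small point worth making explicit: the image of $\alpha_t$ is $\Z[t_I \mid I \in \mcI_m][\rho(t_1,t_2,t_{12})^{-1}]$, so if you read ``the ring $S$ generated by $t$'' as $\Z[t_I]$ without the localization, then $\alpha_t$ is not surjective onto $S$; you should either take $S$ to be the image of $\alpha_t$, or note that any ring isomorphism $S \to S'$ with $t_I \mapsto t_I'$ sends $\rho(t_1,t_2,t_{12})$ to $\rho(t_1',t_2',t_{12}')$ and hence extends uniquely to the localizations, so the equivalence class is unaffected. (In the finite-field case this wrinkle is invisible because any subring of $\F_q$ is already a field.) Your injectivity argument — that an equivalence $t_P \sim t_Q$ forces $P = Q$ by comparing kernels — is exactly right and is the piece Theorem~\ref{T:maxspec}'s proof leaves implicit.
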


\section{Actions}
\label{S:actions}

\begin{definition}
Let $\Sigma_m := \{\pm 1\}^m$, the \textit{group of sign changes}.
Let $\Delta \in \mcR(F_m, K)$, and let 
$\chi\colon F_m \to \F_q\colon w \mapsto \tr(\Delta(w))$ be the character 
of~$\Delta$. Let $t \in K^{\mcI_m}$ be a trace tuple.

\begin{enumerate}
    \item
    Let $\sigma \in \Sigma_m$.
    Define 
    \begin{align*}
        {}^\sigma \Delta & \colon F_m \to \SL(2, K)\colon 
            w \mapsto w(\sigma)\Delta(w); \\
        {}^\sigma \chi & \colon F_m \to K\colon 
            w \mapsto w(\sigma)\chi(w); \text{ and}\\
        {}^\sigma t_I & := \Big(\prod_{i \in I}\sigma_i\Big)t_I.
    \end{align*}
    This defines actions of $\Sigma_m$ on representations, characters, and 
    trace tuples.
    
    \item
    Let $\sigma \in \Sigma_m$.
    Define a ring automorphism on $\Phi_m'$ by mapping $x_I$ to 
    $(\prod_{i \in I}\sigma_i)x_I$.
    This defines an action of $\Sigma_m$ on $\Phi_m'$ by automorphisms, and 
    hence an action on the set of ideals of $\Phi_m'$.

    \item
    Let $\alpha \in \Gal(K)$.
    Define 
    \begin{align*}
        {}^\alpha \Delta & \colon F_m \to \SL(2, K)\colon 
            w \mapsto \alpha(\Delta(w)); \\
        {}^\alpha \chi & \colon F_m \to K\colon 
            w \mapsto \alpha(\chi(w)); \text{ and} \\
        {}^\alpha t_I & := \alpha(t_I).
    \end{align*}
    This defines actions of $\Gal(K)$ on representations, characters, and trace 
    tuples.
\end{enumerate}
\end{definition}

\begin{remark}
The actions are compatible with the various bijections. More precisely, let 
$\Delta \in \mcR(F_m, K)$, let $t \in K^{\mcI_m}$ be a trace tuple, and let 
$P \in \Spec(\Phi_m')$. Denote by $\chi_\Delta$ the character of $\Delta$ and 
by $t_\Delta$ the trace tuple of $\Delta$. Then
\[
    \chi_{({}^\sigma\Delta)} = {}^\sigma(\chi_\Delta),\quad
    t_{({}^\sigma\Delta)} = {}^\sigma(t_\Delta),\quad
    P_{({}^\sigma t)} = {}^\sigma(P_t), \quad\text{and}\quad
    t_{({}^\sigma P)} = {}^\sigma(t_P)
\]
for all $\sigma \in \Sigma_m$, and
\[
    \chi_{({}^\alpha \Delta)} = {}^\alpha(\chi_\Delta) \quad\text{and}\quad 
        t_{({}^\alpha\Delta)} = {}^\alpha(t_\Delta)
\]
for all $\alpha \in \Gal(K)$.
\end{remark}

\section{Projective representations and finitely presented groups}
\label{S:projective}

\begin{definition}
Let $\Gamma$ be a group generated by $\gamma_1, \dotsc, \gamma_m$.
Set
\[
    \mcP(\Gamma, K) := \{\delta\colon \Gamma \to \PSL(2,K) \mid 
        \delta_{|\langle \gamma_1, \gamma_2 \rangle} 
        \text{ is absolutely irreducible}\}.
\]
\end{definition}

\begin{thm}
There is a bijection between $\MaxSpec(\Phi_m')/\Sigma_m$ and 
$\bigcup_q \mcP(F_m, \F_q)/\PGammaL(2, q)$, where $q$ ranges over all prime 
powers.
\end{thm}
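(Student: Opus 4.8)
The plan is to derive this bijection from its affine counterpart, Corollary~\ref{C:bijection_max}, by dividing both sides by the action of the group $\Sigma_m$ of sign changes. Thus the task reduces to identifying $\bigcup_q \mcP(F_m, \F_q)/\PGammaL(2, q)$ with the $\Sigma_m$-quotient of $\bigcup_q \mcR(F_m, \F_q)/\GammaL(2, q)$, and then checking that this quotient corresponds, under the bijection of Corollary~\ref{C:bijection_max}, to $\MaxSpec(\Phi_m')/\Sigma_m$.

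First I would fix a prime power $q$ and build the natural map $\pi_q\colon \mcR(F_m, \F_q) \to \mcP(F_m, \F_q)$ sending $\Delta$ to its composite with the projection $\SL(2, \F_q) \to \PSL(2, \F_q)$. Because $F_m$ is free, $\pi_q$ is surjective: a projective representation is determined by the images of $g_1, \dotsc, g_m$, each of which can be lifted in two ways to $\SL(2, \F_q)$, and replacing a lift of $\delta(g_i)$ by its negative is multiplication by the central element $-I$, which does not affect absolute irreducibility of the restriction to $\langle g_1, g_2 \rangle$; hence every lift lies in $\mcR(F_m, \F_q)$. The same observation shows that two elements of $\mcR(F_m, \F_q)$ have the same image under $\pi_q$ if and only if they differ by the action of a unique $\sigma \in \Sigma_m$; that is, the fibres of $\pi_q$ are exactly the $\Sigma_m$-orbits, so $\pi_q$ induces a bijection $\mcR(F_m, \F_q)/\Sigma_m \to \mcP(F_m, \F_q)$.

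Next I would bring in the group actions. Scalar matrices act trivially by conjugation, so the $\GammaL(2, q)$-action on $\mcR(F_m, \F_q)$ descends to a $\PGammaL(2, q)$-action with $\mcR(F_m, \F_q)/\GammaL(2, q) = \mcR(F_m, \F_q)/\PGammaL(2, q)$, and $\pi_q$ is $\PGammaL(2, q)$-equivariant since conjugation and Galois action commute with $\SL(2, \F_q) \to \PSL(2, \F_q)$. Moreover the $\Sigma_m$- and $\PGammaL(2, q)$-actions on $\mcR(F_m, \F_q)$ commute, because ${}^\sigma\Delta$ differs from $\Delta$ only by scalars lying in the prime field, which are central and Galois-fixed. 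Consequently $\Sigma_m$ acts on $\mcR(F_m, \F_q)/\GammaL(2, q)$, and $\pi_q$ yields a bijection $\big(\mcR(F_m, \F_q)/\GammaL(2, q)\big)/\Sigma_m \to \mcP(F_m, \F_q)/\PGammaL(2, q)$. Since sign changes commute with the inclusions induced by $\F_q \subseteq \F_{q'}$, passing to the union over all $q$ commutes with forming $\Sigma_m$-orbits, and I obtain a bijection between $\big(\bigcup_q \mcR(F_m, \F_q)/\GammaL(2, q)\big)/\Sigma_m$ and $\bigcup_q \mcP(F_m, \F_q)/\PGammaL(2, q)$.

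Finally, Corollary~\ref{C:bijection_max} (built on Theorem~\ref{T:maxspec} and Corollary~\ref{C:alg_ind}) supplies a bijection $\MaxSpec(\Phi_m') \to \bigcup_q \mcR(F_m, \F_q)/\GammaL(2, q)$, and this bijection is $\Sigma_m$-equivariant by the compatibility of the $\Sigma_m$-actions on ideals, trace tuples and representations recorded in Section~\ref{S:actions}. Quotienting both sides by $\Sigma_m$ and composing with the bijection of the previous paragraph gives the theorem. I expect the point requiring the most care to be not any single computation but the bookkeeping around the two commuting actions: verifying that $\Sigma_m$ and $\PGammaL(2, q)$ genuinely commute on $\mcR(F_m, \F_q)$, that the $\GammaL(2, q)$- and $\PGammaL(2, q)$-quotients of $\mcR(F_m, \F_q)$ coincide, and that the bijection of Corollary~\ref{C:bijection_max} really is $\Sigma_m$-equivariant, so that one may divide by $\Sigma_m$ on both sides and match the quotients.
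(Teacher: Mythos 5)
Your proposal is correct and follows essentially the same route as the paper: it invokes Corollary~\ref{C:bijection_max} and then quotients both sides by the $\Sigma_m$-action, with the key point being that two lifts $\Delta, \Delta' \in \mcR(F_m, \F_q)$ induce the same projective representation if and only if $\Delta' = {}^\sigma\Delta$ for some $\sigma \in \Sigma_m$. The paper compresses all of this into a single sentence, while you spell out the surjectivity of the projection, the identification of fibres with $\Sigma_m$-orbits, and the equivariance checks (your claim that the $\sigma$ is \emph{unique} fails in characteristic $2$, where $-I = I$, but the fibres are still exactly the $\Sigma_m$-orbits, so the argument is unaffected).
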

\begin{proof}
This follows from Corollary~\ref{C:bijection_max}, since two representations 
$\Delta, \Delta'\colon F_m \to \SL(2, q)$ induce the same projective 
representation if and only if $\Delta' = {}^\sigma\Delta$ for some 
$\sigma \in \Sigma_m$.
\end{proof}

\begin{definition}
Let $G = \langle g_1, \dotsc, g_m \mid w_1, \dotsc, w_r\rangle$ be a finitely 
presented group. For $s \in \{\pm 1\}^r$ define
\[
    \I_s(G) := \langle \tau(w_ib) - s_i\tau(b)  \mid 
        1 \leq i \leq r,\ b \in \{1, g_1, g_2, g_1g_2\}\rangle \tri \Phi_m',
\]
the \textit{trace presentation ideal of $G$} with respect to the 
\textit{sign system} $s$. (We regard the $\tau(w)$ as elements of~$\Phi_m'$ via 
the isomorphism of Corollary~\ref{C:iso}.)
Set $\I(G) := \bigcap_{s \in \{\pm 1\}^r} \I_s(G)$, the 
\textit{full trace presentation ideal of~$G$}.
\end{definition}

The following result is a reformulation of \cite[Proposition~3.3]{l2q}.
\begin{prop}
\label{P:tracepresentation}
Let $G$ be a finitely presented group. Let $\Delta \in \mcR(F_m, \SL(2, K))$ 
with trace tuple $t \in K^{\mcI_m}$ and prime ideal 
$P = P_t \in \Spec(\Phi_m')$. The following are equivalent:
\begin{enumerate}
    \item
    The representation $\Delta$ induces a projective presentation 
    $\delta\colon G \to \PSL(2, K)$.

    \item
    The trace tuple $t$ is a zero of\/ $\I(G)$.

    \item
    The prime ideal $P$ contains\/ $\I(G)$.
\end{enumerate}
\end{prop}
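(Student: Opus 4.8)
The plan is to prove the equivalences by a chain, roughly $(1)\Leftrightarrow(2)$ and $(2)\Leftrightarrow(3)$, with the bulk of the work in relating ``$\Delta$ induces a projective representation of $G$'' to the vanishing of the sign-twisted trace conditions. The starting observation is that $\delta\colon G \to \PSL(2,K)$ is well-defined precisely when each relator $w_i$ of $G$ maps under $\Delta$ into the center $\{\pm I_2\}$ of $\SL(2,K)$; that is, $\Delta(w_i) = s_i I_2$ for some sign $s_i \in \{\pm 1\}$, giving a sign vector $s = (s_1,\dotsc,s_r) \in \{\pm 1\}^r$. So I would first argue: $\Delta$ induces $\delta$ if and only if there exists $s \in \{\pm 1\}^r$ with $\Delta(w_i) = s_i I_2$ for all $i$.

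The key step is then to translate ``$\Delta(w_i) = s_i I_2$'' into a statement purely about traces, using absolute irreducibility of $\Delta_{|\langle g_1,g_2\rangle}$. Since $(I_2, A_1, A_2, A_1A_2)$ is a $K$-basis of $K^{2\times 2}$ (the standard consequence of absolute irreducibility invoked repeatedly in the excerpt), a matrix $N \in K^{2\times 2}$ equals $s_i I_2$ if and only if $\tr(Nb') = s_i\tr(b')$ for all $b'$ in a basis, equivalently for $b' \in \{I_2, A_1, A_2, A_1A_2\}$ --- indeed $N = s_i I_2$ forces the trace pairings to agree, and conversely matching traces against a basis of the (symmetric, nondegenerate on this basis by irreducibility) trace form pins $N$ down uniquely. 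Applying this with $N = \Delta(w_i)$ and recalling $\tr(\Delta(w_i)\,\Delta(b)) = \veps_A(\tau(w_ib))$ and $\tr(\Delta(b)) = \veps_A(\tau(b))$ via Theorem~\ref{T:tau} (and the identification of $\tau(w)$ with an element of $\Phi_m'$ from Corollary~\ref{C:iso}), the condition $\Delta(w_i) = s_i I_2$ for all $i$ becomes: the trace tuple $t$ is a zero of $\I_s(G) = \langle \tau(w_ib) - s_i\tau(b)\rangle$. Hence $\Delta$ induces a projective representation iff $t$ is a zero of $\I_s(G)$ for some $s$, iff $t$ is a zero of $\bigcap_s \I_s(G) = \I(G)$ --- where the last ``iff'' uses that a point lies in $\V(\bigcap_s \I_s)$ exactly when it lies in $\V(\I_s)$ for some $s$, since $\V(\bigcap \I_s) = \bigcup \V(\I_s)$. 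This establishes $(1)\Leftrightarrow(2)$.

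For $(2)\Leftrightarrow(3)$: by Proposition~\ref{P:tracepresentation}'s setup $P = P_t = \ker(\alpha_t)$ where $\alpha_t\colon \Phi_m' \to K$ evaluates $x_I \mapsto t_I$, so ``$t$ is a zero of $\I(G)$'' means $\alpha_t(f) = 0$ for all $f \in \I(G)$, i.e.\ $\I(G) \subseteq \ker(\alpha_t) = P$. That is immediate. The one point needing a remark is well-definedness of ``$t$ is a zero of $\I(G)$'' and of the generators $\tau(w_ib) - s_i\tau(b)$ as elements of $\Phi_m'$: different choices of trace polynomial $\tau(w)$ differ by an element of $I_m$, hence agree in $\Phi_m$ and in its localisation, and the identification with $\Phi_m'$ is the isomorphism of Corollary~\ref{C:iso}; so the ideals $\I_s(G)$ and $\I(G)$ are genuinely well-defined.

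\textbf{Main obstacle.} The delicate part is the second step --- showing that $\Delta(w_i) = s_iI_2$ is \emph{equivalent} to the full list of scalar trace identities $\veps_A(\tau(w_ib)) = s_i\veps_A(\tau(b))$ for $b \in \{1,g_1,g_2,g_1g_2\}$, and in particular that matching traces against just these four basis elements suffices to conclude $\Delta(w_i)$ is scalar (not merely that it has the right trace). This is exactly where absolute irreducibility of the restriction to the first two generators is used, and it is the reason $\mcR$ is defined with that hypothesis built in; I would spell out that the map $N \mapsto (\tr(Nb'))_{b'}$ is injective on $K^{2\times 2}$ because the Gram matrix of the trace form on the basis $(I_2,A_1,A_2,A_1A_2)$ --- which is precisely the $4\times 4$ matrix appearing in the proof of Proposition~\ref{P:t123}, with determinant a power of $\rho(t_1,t_2,t_{12}) \neq 0$ --- is invertible. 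Everything else is bookkeeping with Theorem~\ref{T:tau} and the definitions.
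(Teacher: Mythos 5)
Your proposal is correct and follows essentially the same route as the paper: reduce (1) to $\Delta(w_i) = s_iI_2$ for some sign vector $s$, translate that into trace identities against the basis $(I_2, A_1, A_2, A_1A_2)$ using non-degeneracy of the trace form and absolute irreducibility, and note that $(2)\Leftrightarrow(3)$ is immediate from $P_t = \ker(\alpha_t)$. Your additional remarks on the Gram matrix and on well-definedness of $\tau(w)$ in $\Phi_m'$ are correct elaborations of points the paper leaves implicit.
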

\begin{proof}
The equivalence of (2) and (3) is immediate. We prove the equivalence of~(1) 
and~(2). Let $A_i := \Delta(g_i)$. Then $\Delta$ induces a projective 
representation of $G$ if and only if $w_i(A_1, \dotsc, A_m) = s_i I_2$ for some 
$s = (s_1, \dotsc, s_r) \in \{\pm 1\}^r$.
Since the trace bilinear form is non-degenerate, this is equivalent to 
$\tr(w_i(A_1, \dotsc, A_m)B) - s_i\tr(B) = 0$, where $B$ runs through a basis 
of~$K^{2 \times 2}$. Since $\langle A_1, A_2 \rangle$ is absolutely 
irreducible, we can choose the basis $(I_2, A_1, A_2, A_1A_2)$.
\end{proof}

\begin{cor}
\label{C:quotients}
There is a bijection between the maximal elements of $\V(\I(G))/\Sigma_m$,
where $\V(\I(G)) = \{P \in \Spec(\Phi_m') \mid \I(G) \subseteq P\}$ and 
$\bigcup_q \mcP(G, \F_q)/\PGammaL(2, q)$, where $q$ ranges over all prime 
powers.
\end{cor}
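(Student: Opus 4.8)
The plan is to build the bijection by composing the bijection from Theorem~\ref{P:tracepresentation} (in its corrected, ideal-theoretic form) with the identification already established between prime ideals of $\Phi_m'$ and representations. Concretely: the previous theorem (the arbitrary-field version of Theorem~\ref{T:maxspec}) gives a bijection between $\Spec(\Phi_m')$ and equivalence classes of trace tuples, and hence — via the remark following the definition of $\mcR(\Gamma, K)$ and Corollary~\ref{C:alg_ind} — between $\MaxSpec(\Phi_m')$ and $\bigcup_q \mcR(F_m, \F_q)/\GammaL(2,q)$ as in Corollary~\ref{C:bijection_max}. First I would restrict this bijection to $\V(\I(G))$: by Proposition~\ref{P:tracepresentation}, a prime $P$ lies in $\V(\I(G))$ precisely when the corresponding representation $\Delta$ factors through $G$ projectively, so the bijection of Corollary~\ref{C:bijection_max} carries $\V(\I(G)) \cap \MaxSpec(\Phi_m')$ onto $\bigcup_q \mcR(G, \F_q)/\GammaL(2,q)$.

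Next I would quotient both sides by the residual $\Sigma_m$-action. On the representation side, two representations $F_m \to \SL(2,q)$ induce the same projective representation if and only if they differ by an element of $\Sigma_m$ (this is exactly the fact used in the proof of the $\mcP(F_m,\F_q)$ theorem in Section~\ref{S:projective}), and this statement holds verbatim for representations of $G$ since a projective representation of $G$ is the same as a projective representation of $F_m$ killing the relators. On the ideal side, the remark in Section~\ref{S:actions} records that the bijection $P \mapsto t_P$ is $\Sigma_m$-equivariant, and since $\I(G) = \bigcap_{s} \I_s(G)$ is itself $\Sigma_m$-stable (the sign system permutes the $\I_s(G)$), the set $\V(\I(G))$ is a union of $\Sigma_m$-orbits. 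Passing to orbits on both sides then yields a bijection between $\V(\I(G))/\Sigma_m$ (restricted to maximal ideals) and $\bigcup_q \mcP(G, \F_q)/\PGammaL(2,q)$, exactly mirroring the proof of the earlier $\mcP(F_m,\F_q)$ theorem but with $G$ in place of $F_m$.

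The one genuine subtlety — and the step I expect to be the main obstacle — is the phrase \emph{maximal elements of} $\V(\I(G))/\Sigma_m$ in the statement, as opposed to maximal ideals of $\Phi_m'$. Over a finite field a prime ideal of $\Phi_m'$ with finite residue ring is automatically maximal, so for the finite-field part one must check that the maximal elements of the poset $\V(\I(G))$ (under inclusion) that have finite residue field are precisely the maximal ideals contained in $\V(\I(G))$; this needs the observation that $\Phi_m'$ is a finitely generated $\Z$-algebra (clear from Corollary~\ref{C:iso}), so that a prime with finite residue field is maximal and, conversely, that every chain in $\V(\I(G))$ terminating at such a prime is detected correctly. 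I would spell this out by noting that for a prime $P \in \V(\I(G))$, $P$ is a maximal element of the poset exactly when $\Phi_m'/P$ admits no proper quotient domain lying in $\V(\I(G))/P$; combined with the Nullstellensatz for finitely generated $\Z$-algebras this forces $\Phi_m'/P$ to be a finite field, and then Theorem~\ref{T:maxspec} applies. The rest is bookkeeping: once the maximal-element/maximal-ideal identification is in place, the bijection follows formally from Corollary~\ref{C:bijection_max}, Proposition~\ref{P:tracepresentation}, and the $\Sigma_m$-equivariance recorded in Section~\ref{S:actions}.
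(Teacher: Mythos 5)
Your overall plan is sound — combine the theorem of Section~\ref{S:projective} with Proposition~\ref{P:tracepresentation} — but the \emph{order} in which you carry out the two reductions introduces a genuine error. You claim that the bijection of Corollary~\ref{C:bijection_max} carries $\V(\I(G)) \cap \MaxSpec(\Phi_m')$ onto $\bigcup_q \mcR(G, \F_q)/\GammaL(2,q)$. This is false. By Proposition~\ref{P:tracepresentation}, the condition $\I(G) \subseteq P$ is equivalent to the induced \emph{projective} representation $\delta$ factoring through $G$, i.e.\ to $\Delta(w_i) = \pm I_2$ for each relator $w_i$; it is \emph{not} equivalent to $\Delta$ itself factoring through $G$. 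Concretely, if $g_1^2$ is a relator and $\delta(g_1) \neq 1$, then in odd characteristic every lift $\Delta$ must satisfy $\Delta(g_1)^2 = -I_2$, so $\delta$ lies in $\mcP(G, \F_q)$ but has no preimage whatsoever in $\mcR(G, \F_q)$; the image of $\V(\I(G)) \cap \MaxSpec(\Phi_m')$ under Corollary~\ref{C:bijection_max} is a strictly larger set than $\bigcup_q \mcR(G, \F_q)/\GammaL(2,q)$ in general. Worse, your next step --- ``quotient both sides by the residual $\Sigma_m$-action'' --- does not make sense on $\mcR(G, \F_q)$, because $\Sigma_m$ does not preserve it: a sign change sending some $w_i$ to $-I_2$ destroys linear factorization through $G$. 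This is precisely what the sign systems $s$ and the ideals $\I_s(G)$ are there to handle, and the proposal as written bypasses them.

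The repair is to swap your two steps. First pass to the $\Sigma_m$-quotient, invoking the theorem of Section~\ref{S:projective}, which gives the bijection $\MaxSpec(\Phi_m')/\Sigma_m \leftrightarrow \bigcup_q \mcP(F_m, \F_q)/\PGammaL(2, q)$. Only then restrict: the ideal $\I(G) = \bigcap_s \I_s(G)$ is $\Sigma_m$-stable, so $\V(\I(G))$ is a union of orbits, and by Proposition~\ref{P:tracepresentation} the orbits lying in $\V(\I(G))$ correspond exactly to the classes $[\delta]$ with $\delta \in \mcP(G, \F_q)$. At the projective level the restriction is clean because $\mcP(G, \F_q)$ really is the subset of $\mcP(F_m, \F_q)$ consisting of projective representations killing the relators, with no lifting issues. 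Your closing paragraph about maximal elements is heavier than needed: a maximal element of the poset $\V(\I(G))$ is automatically a maximal ideal of $\Phi_m'$, since every prime is contained in a maximal ideal which still contains $\I(G)$; the fact that such maximal ideals have finite residue fields is already part of Theorem~\ref{T:maxspec} and Corollary~\ref{C:bijection_max} and does not need to be re-derived here.
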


\section{Subgroups}
\label{S:subgrops}

Corollary~\ref{C:quotients} describes a bijection between classes of maximal 
ideals and classes of absolutely irreducible projective representations.
In this section, we establish criteria to decide whether a maximal ideal is 
mapped to a \textit{surjective} projective representation.

According to Dickson's classification (see for 
example~\cite[Section~3.6]{suzuki}), an absolutely irreducible subgroup 
$U \lneqq \PSL(2, q)$ is
\begin{itemize}
    \item
    isomorphic to $\Alt_4$, $\Sym_4$, or $\Alt_5$, or

    \item
    a dihedral group, or

    \item
    isomorphic to $\PGL(2, q')$ for some $q'|r$ if $q = r^2$ is a square, or

    \item
    isomorphic to $\PSL(2, q')$ for some $q'|q$.
\end{itemize}

For a finite group $H$ let $J(H) := \bigcap_{G} \I(G)$, where $G$ ranges over 
all presentations of $G$ on $m$ generators.

\begin{prop}
\label{P:subgroupideals}
Let $H$ be a finite group. 
Set $J'(H) := (J(H):\left(\bigcap_Q J(Q)\right)^\infty) \tri \Phi_m'$,
where $Q$ ranges over all proper quotients of $H$.

Let $\Delta \in \mcR(F_m, \SL(2, K))$ with trace tuple $t \in K^{\mcI_m}$ and 
prime ideal $P = P_t \in \Spec(\Phi_m')$. 
The following are equivalent:
\begin{enumerate}
    \item
    The representation $\Delta$ induces a projective presentation $\delta$
    such that $\im(\delta)\cong H$.

    \item
    The trace tuple $t$ is a zero of $J'(H)$.

    \item
    The prime ideal $P$ contains $J'(H)$.
\end{enumerate}
\end{prop}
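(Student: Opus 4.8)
The equivalence of (2) and (3) is again immediate from the definition of $\V(\cdot)$ and the fact that $P$ is prime, so the real content is the equivalence of (1) and (3). The plan is to combine Corollary~\ref{C:quotients} (which already characterizes when $\delta$ is a projective representation of \emph{some} finitely presented group with a given presentation) with a careful analysis of the ideal-theoretic meaning of the saturation defining $J'(H)$. I would first unwind the definition of $J(H) = \bigcap_G \I(G)$: a prime ideal $P$ contains $J(H)$ if and only if $P$ contains $\I(G)$ for \emph{some} presentation $G$ of $H$ on $m$ generators (here one uses that $P$ prime and $J(H)$ a finite intersection of ideals $\I(G)$ forces $P \supseteq \I(G)$ for one of the factors; strictly, $\I(G)$ need not be prime, but $P \supseteq \bigcap_G \I(G)$ together with primality of $P$ gives $P \supseteq \I(G)$ for some $G$ provided the intersection is finite — I would reduce to finitely many presentations up to the relevant equivalence, or argue via $\V(J(H)) = \bigcup_G \V(\I(G))$ as a finite union of closed sets). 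By Proposition~\ref{P:tracepresentation}, $P \supseteq \I(G)$ means exactly that $\Delta$ induces a projective presentation $\delta\colon G \to \PSL(2,K)$; ranging over all presentations $G$ of $H$, this says precisely that $\im(\delta)$ is a quotient of $H$, i.e.\ $\im(\delta)$ is isomorphic to $H$ or to a proper quotient of $H$.

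So $P \supseteq J(H)$ is equivalent to ``$\im(\delta)$ is a quotient of $H$,'' and by the same reasoning applied to each proper quotient $Q$, $P \supseteq J(Q)$ is equivalent to ``$\im(\delta)$ is a quotient of $Q$.'' Hence $P \supseteq \bigcap_Q J(Q)$ (again using primality of $P$ and finiteness of the set of proper quotients of the finite group $H$, up to isomorphism) is equivalent to ``$\im(\delta)$ is a proper quotient of $H$.'' The point of the saturation $J'(H) = (J(H) : (\bigcap_Q J(Q))^\infty)$ is then the standard fact that for a prime ideal $P$,
\[
    P \supseteq (J(H) : (\textstyle\bigcap_Q J(Q))^\infty)
    \iff
    P \supseteq J(H) \text{ and } P \not\supseteq \textstyle\bigcap_Q J(Q).
\]
I would prove this directly: if $P \supseteq J(H)$ and some $f \in \bigcap_Q J(Q)$ lies outside $P$, then for any $g$ in the saturation we have $f^N g \in J(H) \subseteq P$ for some $N$, so $g \in P$ since $P$ is prime and $f \notin P$; conversely, if $P$ contains the saturation then $P \supseteq J(H)$ (as $J(H)$ is contained in its own saturation), and if additionally $P \supseteq \bigcap_Q J(Q)$ one checks $\bigcap_Q J(Q) \subseteq (J(H):(\bigcap_Q J(Q))^\infty)$ is false in general — more cleanly, if $P \supseteq \bigcap_Q J(Q)$ as well, pick $f \in \bigcap_Q J(Q)$; then $f \cdot (\text{saturation}) \subseteq$ a larger saturation, and iterating shows the saturation would have to equal $J(H):(\ldots)$ already containing a power of $f$, contradiction — I would simply cite the elementary characterization of saturation at a prime.

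Putting the two equivalences together: $P \supseteq J'(H)$ iff [$\im(\delta)$ is a quotient of $H$] and [$\im(\delta)$ is \emph{not} a proper quotient of $H$], which holds iff $\im(\delta) \cong H$. This is exactly statement (1). The main obstacle I anticipate is the bookkeeping around ``$P$ prime and $P$ contains an infinite intersection of ideals implies $P$ contains one of them'': this fails for genuinely infinite intersections, so I must argue that only finitely many of the ideals $\I(G)$ (as $G$ ranges over $m$-generator presentations of $H$) are distinct, or equivalently that $\V(J(H))$ is a finite union $\bigcup_G \V(\I(G))$. This follows because a presentation of the finite group $H$ on $m$ generators is determined by the choice of generating $m$-tuple together with the normal closure it generates, and the ideal $\I(G)$ depends only on the set of trace polynomials $\tau(w)$ for $w$ in the relator set; since $H$ is finite, one can take the full multiplication table as relators, giving a single ``universal'' presentation whose ideal is contained in (hence whose variety contains) every other, so in fact $J(H) = \I(G_{\mathrm{univ}})$ up to radical and the intersection collapses. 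I would state this reduction carefully at the start of the proof, or alternatively work throughout with varieties $\V(\cdot)$ rather than ideals, where the finite-union argument is transparent and the prime-ideal case corresponds to irreducibility of $\{P\}$ as a point.
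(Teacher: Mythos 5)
Your overall approach mirrors the paper's proof exactly: apply Proposition~\ref{P:tracepresentation} to convert ``$\delta$ factors over $H$'' (resp.\ over a proper quotient $Q$) into the containment $J(H)\subseteq P$ (resp.\ $J(Q)\subseteq P$), then use the saturation $J'(H)=(J(H):(\bigcap_Q J(Q))^\infty)$ to excise the proper-quotient locus. Your explicit discussion of finiteness is a worthwhile addition that the paper leaves implicit: the step ``$P\supseteq J(H)$ implies $P\supseteq\I(G)$ for some presentation $G$'' does require that $\V(J(H))$ be a \emph{finite} union of the $\V(\I(G))$. Your ``universal presentation via the multiplication table'' sub-argument is not quite right as stated (that presentation has $|H|$ generators, not $m$); the clean reason is that $\V(\I(G))$ depends only on $\ker(F_m\twoheadrightarrow H)$, and there are only finitely many such kernels because $H$ is finite.

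There is, however, a real gap in the step you defer to ``the elementary characterization of saturation at a prime.'' You claim that for a prime $P$, $P\supseteq(I:J^\infty)$ holds \emph{iff} $P\supseteq I$ and $P\not\supseteq J$. The forward implication (which you prove) is fine, but the reverse implication is false for general ideals. Take $R=k[x,y]$, $I=(xy)$, $J=(y)$, $P=(x,y)$: then $(I:J^\infty)=(x)\subseteq P$, yet $J\subseteq P$ as well. The correct general fact is $\V(I:J^\infty)=\overline{\V(I)\setminus\V(J)}$, a Zariski \emph{closure}; the biconditional you need is exactly the assertion that $\V(J(H))\setminus\V(\bigcap_Q J(Q))$ is already closed, i.e.\ that no irreducible component of $\V(J(H))$ meets $\V(\bigcap_Q J(Q))$ without being contained in it. Your attempted direct argument for the converse (``$f\cdot(\text{saturation})\subseteq$ a larger saturation, iterating shows\dots'') does not go through, and you cannot fix it by citation because the fact you want to cite does not exist in the form you state it. To be fair, the paper's own proof asserts the same equivalence in one sentence without verifying it, so this is a shared gap rather than one you introduced; still, a complete argument must justify that the $H$-image locus is closed (not merely locally closed) in $\Spec(\Phi_m')$ for the $H$ in question.
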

\begin{proof}
It suffices to prove the equivalence of (1) and (2).
By Proposition~\ref{P:tracepresentation}, $\delta$ factors over $H$ if and only 
if $t$ is a zero of $J(H)$, and it factors over $Q$ if and only if $t$ is a 
zero of $J(Q)$. But $t$ is a zero of $J'(H)$ if and only if it is a zero of 
$J(H)$ but not a zero of $J(Q)$ for any proper quotient $Q$ of $H$, which 
proves the proposition.
\end{proof}

We will later let $H$ be one of the groups $\Alt_4$, $\Sym_4$, or $\Alt_5$,
which deals with the first kind of subgroups. We handle the dihedral groups in 
a slightly more general context.

\begin{lemma}
\label{L:allt}
Let $t \in K^{\mcI_m}$ be a trace tuple.
Let $\emptyset \neq J \subseteq \{1, \dotsc, m\}$.
If $t_I = 0$ for all $I \in \mcI_m$ with $|I \cap J|$ odd, then $t_I = 0$ for 
all $\emptyset \neq I \subseteq \{1, \dotsc, m\}$ with $|I \cap J|$ odd.
\end{lemma}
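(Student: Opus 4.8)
The plan is to reduce the statement to a computation with the explicit linear relations in Proposition~\ref{P:linear_relations}, together with the basic trace identities, and then argue by induction on $|I|$. Note that for $I \in \mcI_m$, the hypothesis already gives $t_I = 0$ whenever $|I \cap J|$ is odd; so the content is to propagate this to arbitrary subsets $I$. First I would pass to the setting where $\rho(t_1, t_2, t_{12}) \neq 0$: since everything in sight is polynomial in the $t_I$ with $I \in \mcI_m$ and the conclusion is a system of polynomial identities, it suffices (as in the proof of Corollary~\ref{C:fricke}) to prove it after base change to an algebraically closed field and on the Zariski-dense open locus $\rho \neq 0$. On that locus, by Corollary~\ref{C:alg_ind} the trace tuple has a realization $A = (A_1, \dotsc, A_m) \in \SL(2, L)^m$, and every $t_K$ for $\emptyset \neq K \subseteq \{1, \dotsc, m\}$ equals $\tr(A_{k_1} \dotsb A_{k_\ell})$.

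The key structural observation is that the hypothesis "$t_I = 0$ for $I \in \mcI_m$ with $|I \cap J|$ odd" should be upgraded, via Proposition~\ref{P:t123}, to a statement about the matrices themselves. Writing each $A_i$ for $i \geq 3$ in the basis $(I_2, A_1, A_2, A_1A_2)$ as $A_i = \Lambda_0^i I_2 + \Lambda_1^i A_1 + \Lambda_2^i A_2 + \Lambda_{12}^i A_1 A_2$ (all divided by $\rho$), the four traces $t_i, t_{1i}, t_{2i}, t_{12i}$ determine the four coefficients $\Lambda^i_\bullet$; the hypothesis tells us which of these coefficients vanish. Concretely: if $j \in J$, then $t_j = t_{1j} = 0$ force (by the formulae for $\lambda_0^j, \lambda_1^j, \dotsc$ inverted, i.e. by the linear system in the proof of Proposition~\ref{P:t123}) a clean relation among the coefficients of $A_j$, and if $j \notin J$ the complementary relation holds. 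The cleanest way to package this is in terms of a conjugating involution: I would show there is a diagonalizable matrix $C \in \PGL(2, \bar L)$ with $C^2 = 1$ such that $C A_i C^{-1} = A_i$ for $i \notin J$ and $C A_i C^{-1} = -A_i$ for $i \in J$ (here $-A_i$ makes sense in $\SL(2)$ since $A_i$ has determinant $1$, and $\tr(A_i) = 0$ is exactly the condition for such a sign-flip to be realizable by conjugation). Granting such a $C$, the lemma is immediate: for any $I$ with $|I \cap J|$ odd, $\tr(A_{i_1} \dotsb A_{i_\ell}) = \tr(C A_{i_1} \dotsb A_{i_\ell} C^{-1}) = (-1)^{|I \cap J|}\tr(A_{i_1}\dotsb A_{i_\ell}) = -t_I$, whence $t_I = 0$.

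So the real work is producing $C$, and this is where I expect the main obstacle to lie, precisely because the statement must be characteristic-free (so "$\tr = 0$ implies conjugate to its own negative" needs care in characteristic $2$, where $-1 = 1$ and the argument via eigenvalues $\{\lambda, -\lambda\}$ degenerates). An alternative, and probably safer, route avoiding $C$ entirely is a direct induction on $|I|$ using the trace identities: every $t_I$ with $|I| = \ell$ can be rewritten, using identities of the type $t_{i,1,2} = t_i t_{12} - \dotsb$ and \eqref{E:t132}, \eqref{E:t1213}, in terms of traces of shorter words and traces $t_K$ with $K \in \mcI_m$; one checks that each rewriting step preserves the parity of $|{\cdot} \cap J|$ in every surviving term, so that a term with $|I \cap J|$ odd reduces to a $\Z$-linear combination of products of traces, in each of which at least one factor has $|{\cdot} \cap J|$ odd and length $\leq \ell$, closing the induction. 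The bookkeeping — verifying the parity invariance for each of the finitely many reduction rules used in the proof of Theorem~\ref{T:tau} — is the crux; it is a routine but slightly tedious case analysis, so in the write-up I would state the parity-preservation as the key lemma and verify it rule by rule. Either route works; I would present the conjugation argument on the $\rho \neq 0$ locus (it is shortest) and handle the characteristic-$2$ subtlety by noting that $A_j$ with $\tr(A_j) = 0$ satisfies $A_j^2 = -I_2$, so $A_j$ itself conjugates $A_i \mapsto A_j A_i A_j^{-1}$ in a way that can be combined across $J$ — but if that combination proves awkward, fall back on the parity induction.
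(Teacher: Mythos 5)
Your opening sentence names the right tool — Proposition~\ref{P:linear_relations} plus an induction — and that is indeed how the paper argues: for $I$ with $|I\cap J|$ odd and $I \notin \mcI_m$, set $i := \min(I\setminus\{1,2\})$ and $j := I\setminus\{1,2,i\}$, use the relevant formula of Proposition~\ref{P:linear_relations} to write $\rho\, t_I = \lambda_0^i(t)t_{*j} + \lambda_1^i(t)t_{*j'} + \dotsb$, and then run a small case analysis on which of $1,2,i$ lie in $J$: in each case every summand vanishes, either because the $t$-factor has a subscript of odd $J$-intersection (handled by the inductive hypothesis on the size of $j$) or because the multiplying $\lambda_\bullet^i$, which is a polynomial in the $\mcI_m$-traces, vanishes under the given hypotheses. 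But having named this approach you then abandon it for two concrete routes, and neither one closes.

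Your primary route, constructing a conjugating involution $C$ with $CA_iC^{-1}=A_i$ for $i\notin J$ and $CA_iC^{-1}=-A_i$ for $i\in J$, fails in characteristic two for the unavoidable reason you half-acknowledge: even if $C$ exists, the resulting identity $t_I=-t_I$ is vacuous when $2=0$, and the suggested fallback ($A_j^2=-I_2=I_2$, so conjugate by $A_j$) does not rescue it — conjugation by $A_j$ does not fix the $A_i$ with $i\notin J$ in general. Beyond the characteristic-two defect, the existence of such a $C$ is itself the content of the imprimitivity characterization (Proposition~\ref{P:dihedral}), whose proof in the paper \emph{uses} Lemma~\ref{L:allt}; so without an independent construction of $C$ directly from the $\mcI_m$-vanishing hypotheses (which is possible with some work, e.g.\ $C = A_2 - \tfrac{t_2}{2}I_2$ when $J=\{1\}$, but again only away from characteristic two), you are at risk of circularity. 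Your fallback route misidentifies the reduction mechanism: the rewriting rules in the proof of Theorem~\ref{T:tau} (cyclic reduction, removal of inverses, removal of repeated letters, the rearrangement \eqref{E:t132}) terminate precisely at the products $A_{i_1}\dotsb A_{i_k}$ with $i_1<\dotsb<i_k$, i.e.\ at the $t_I$ themselves — there is nothing left to rewrite, so "parity-preservation rule by rule" has no rules to apply. What actually reduces $t_I$ for $I\notin\mcI_m$ to $\mcI_m$-traces is the expansion of $A_i$ in the basis $(I_2,A_1,A_2,A_1A_2)$ via Proposition~\ref{P:t123}, i.e.\ Proposition~\ref{P:linear_relations}, which is a separate ingredient from Theorem~\ref{T:tau}. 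Finally, the preliminary reduction to the locus $\rho\neq 0$ is unnecessary: $\rho(t_1,t_2,t_{12})\neq 0$ is part of the definition of a trace tuple and is already a hypothesis of the lemma.
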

\begin{proof}
Assume $I \not\in \mcI_m$ with $|I \cap J|$ odd.
We proceed by induction on $|I|$.
We assume that $I \cap \{1, 2\} = \emptyset$; the other cases are analogous.
Let $i$ be the minimum of $I$, and let $j := I - \{i\}$.
By Proposition~\ref{P:linear_relations}, 
$t_I = t_{ij} = 1/\rho(t)\big(\lambda_0^i(t)t_j + \lambda_1^i(t)t_{1j} 
    + \lambda_2^i(t)t_{2j} + \lambda_{12}^i(t)t_{12j}\big)$.
There are eight cases to consider; we give the proof for two of them, the other 
six are analogous.
The first case is $1,2,i \not \in J$; the sets $j$, $\{1\}\cup j$, 
$\{2\}\cup j$, and $\{1,2\}\cup j$ have odd intersection with $J$, thus 
$t_j = t_{1j} = t_{2j} = t_{12j} = 0$ by induction. The formula for $t_{ij}$ 
shows that $t_{ij} = 0$.
The second case is $1 \in J$ but $2,i \not\in J$; now 
$t_1 = t_2 = t_i = t_{12} = t_{1i} = t_{2i} = t_{12i} = t_j = t_{2j} = t_{12j} 
    = 0$. 
By Proposition~\ref{P:t123}, $\lambda_1^i(t) = 0$, so $t_{ij} = 0$.
\end{proof}

Let $\Delta\in \mcR(F_m, K)$; then $\Delta$ is \textit{imprimitive} if 
$K^{2 \times 1} = V_1 \oplus V_2$ for one-dimensional subspaces 
$V_1, V_2 \leq K^{2 \times 1}$ such that $\Delta$ permutes the $V_i$ 
transitively.

\begin{prop}
\label{P:dihedral}
Let $K$ be an algebraically closed field.
Let $\Delta \in \mcR(F_m, K)$, and let $t$ be its trace tuple.
Then $\Delta$ is imprimitive if and only if there exists 
$\emptyset \neq J \subseteq \{1, \dotsc, m\}$ such that $t_I = 0$ for all 
$I \in \mcI_m$ with $|I \cap J|$ odd.
\end{prop}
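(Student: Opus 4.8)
The plan is to prove the two directions separately, with the substantive work going into the ``only if'' direction; the ``if'' direction follows essentially from Lemma~\ref{L:allt} together with a direct construction of a block structure.

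First I would handle the easy direction. Suppose such a set $J$ exists. By Lemma~\ref{L:allt}, $t_I = 0$ for \emph{all} $\emptyset \neq I \subseteq \{1, \dotsc, m\}$ with $|I \cap J|$ odd; in particular, for each $i \in J$ we have $t_i = \tr(A_i) = 0$, and for $i, i' \in J$ we have $t_{i, i'} = 0$ and (after using the trace identities) $\tr(A_i A_{i'}^{-1}) = t_i t_{i'} - t_{i,i'} = 0$. These vanishing conditions say precisely that each $A_i$ with $i \in J$ anticommutes — up to the appropriate relation — with the others in the sense needed to force a common pair of eigenlines permuted nontrivially. Concretely, pick $i_0 \in J$; since $\tr(A_{i_0}) = 0$ and $A_{i_0} \in \SL(2,K)$ with $K$ algebraically closed, $A_{i_0}$ has eigenvalues $\pm\sqrt{-1}$ and is diagonalizable, giving eigenlines $V_1, V_2$. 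One then checks, using that all relevant traces involving $i_0$ and another $J$-index vanish, that every $A_i$ with $i \in J$ swaps $V_1$ and $V_2$, while every $A_i$ with $i \notin J$ fixes each of $V_1, V_2$ (the relevant matrix-unit coefficients are exactly the traces shown to vanish, via the basis $(I_2, A_1, A_2, A_1A_2)$). Since $\langle A_1, A_2 \rangle$ is absolutely irreducible it is not contained in the stabilizer of the pair $\{V_1, V_2\}$ as an ordered pair, so the $V_i$ are genuinely permuted transitively; hence $\Delta$ is imprimitive.

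For the converse, suppose $\Delta$ is imprimitive, so $K^{2\times 1} = V_1 \oplus V_2$ with $\Delta$ permuting $\{V_1, V_2\}$ transitively. Conjugating, we may take $V_1 = \langle e_1 \rangle$, $V_2 = \langle e_2 \rangle$. Let $J := \{ i \mid A_i \text{ swaps } V_1 \text{ and } V_2 \}$; transitivity forces $J \neq \emptyset$. Each $A_i$ with $i \notin J$ is diagonal, and each $A_i$ with $i \in J$ is antidiagonal. For a product $A_{i_1} \cdots A_{i_k}$, the number of antidiagonal factors is $\sum_{j} [\,i_j \in J\,]$, and the product is diagonal or antidiagonal according to the parity of this count; an antidiagonal $\SL(2,K)$-matrix has trace $0$. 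Thus for $I = \{i_1 < \dotsb < i_k\}$ with $|I \cap J|$ odd, the matrix $A_{i_1}\cdots A_{i_k}$ is antidiagonal and $t_I = 0$. This holds in particular for all $I \in \mcI_m$, giving the desired $J$.

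The main obstacle is making the ``if'' direction fully rigorous: translating the finite list of trace-vanishing conditions into the assertion that the $A_i$ with $i \in J$ all anti-diagonalize with respect to the \emph{same} pair of lines. The cleanest route is to fix the eigenbasis of one $A_{i_0}$ ($i_0 \in J$) and then, for each other index $i$, expand $A_i$ in the basis $(I_2, A_1, A_2, A_1 A_2)$ — or directly examine its matrix entries in the chosen coordinates — and read off from the vanishing traces that $A_i$ is diagonal (if $i \notin J$) or antidiagonal (if $i \in J$); one must check that the assignment $i \mapsto$ (diagonal/antidiagonal) is forced to be consistent, i.e.\ that no $A_i$ is a mix, which is where one uses that $\langle A_1, A_2\rangle$ is absolutely irreducible to pin down the block structure and that $t_i = 0$ precisely when $A_i$ is antidiagonal. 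The bookkeeping mirrors the case analysis already carried out in the proof of Lemma~\ref{L:allt}, so I would organize it the same way, treating the position of $J$ relative to $\{1,2\}$ as the cases.
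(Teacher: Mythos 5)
Your ``only if'' direction (imprimitivity implies the vanishing of the $t_I$) is correct and matches the paper's terse ``the condition is obviously necessary.'' The substantive direction, which you call the ``if'' direction, has a genuine error in the chosen normalization.

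You propose to pick $i_0 \in J$, diagonalize $A_{i_0}$ (which has trace $0$), and take its eigenlines $V_1, V_2$ as the block system. This cannot work, for two reasons. First, $A_{i_0}$ \emph{fixes} its own eigenlines; but $i_0 \in J$ is supposed to \emph{swap} the two blocks. Concretely, if you put $A_1 = \bigl(\begin{smallmatrix} i & 0 \\ 0 & -i \end{smallmatrix}\bigr)$ with $1 \in J$, $2 \notin J$, the conditions $t_1 = t_{12} = 0$ only force $A_2$ to have equal diagonal entries; absolute irreducibility of $\langle A_1, A_2\rangle$ then forces $A_2$ to be \emph{non}-diagonal in this basis, so the eigenlines of $A_1$ are not a system of imprimitivity at all. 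Second, in characteristic $2$ a trace-zero matrix in $\SL(2,K)$ is unipotent, hence not diagonalizable unless it is the identity; the field $K$ is algebraically closed but of arbitrary characteristic here, and characteristic $2$ is explicitly in scope throughout the paper, so this is not a removable corner case.

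The fix is to diagonalize whichever of $A_1$, $A_2$, $A_1A_2$ corresponds to a subset with \emph{even} intersection with $J$, since that matrix is supposed to be diagonal in the block basis and, crucially, $\rho(t_1,t_2,t_{12}) \neq 0$ guarantees it has two distinct eigenvalues (hence is diagonalizable in every characteristic). One must also observe that $J \cap \{1,2\} = \emptyset$ is impossible: if $1,2 \notin J$ and $i_0 \in J$, then $t_{i_0} = t_{1i_0} = t_{2i_0} = t_{12 i_0} = 0$, and the formula of Proposition~\ref{P:t123} then forces $A_{i_0} = 0$, a contradiction. Having ruled this out, one handles the three remaining cases ($1 \in J \not\ni 2$, symmetrically, and $1,2 \in J$): in each, after diagonalizing the appropriate matrix, the vanishing of the $t_I$ with $I \in \mcI_m$ and $|I \cap J|$ odd forces each $A_i$ to be antidiagonal if $i \in J$ and diagonal if $i \notin J$, with the non-degeneracy of the linear systems coming precisely from $\rho \neq 0$. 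This is a legitimate, more elementary route than the paper's, which instead invokes a character-theoretic criterion for imprimitivity from an external reference, extends the finite trace conditions to all words via Lemma~\ref{L:allt}, and closes the argument by the word-length induction of Theorem~\ref{T:tau}; the matrix approach avoids the external citation but demands the correct choice of normalization that your sketch misses.
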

\begin{proof}
Let $\chi\colon F_m \to \F_q\colon w \mapsto \tr(\Delta(w))$ be the character 
of $\Delta$. By \cite[Theorem~3.3]{character}, $\Delta$ is imprimitive if and 
only if there exists an epimorphism $\psi\colon F_m \to \{\pm 1\}$ such that 
$\psi(w) = -1$ implies $\chi(w) = 0$ for all $w \in F_m$.
For $\emptyset \neq J \subseteq \{1, \dotsc, m\}$ define an epimorphism 
$\psi_J\colon F_m \to \{\pm 1\}$ by $\psi_J(g_j) = -1$ if $j \in J$ and 
$\psi_J(g_j) = 1$ otherwise. This yields a bijection between the non-empty 
subsets of $\{1, \dotsc, m\}$ and the epimorphisms of $F_m$ onto $\{\pm 1\}$.
Let $A_i := \Delta(g_i)$ for $i \in \{1, \dotsc, m\}$. We show that 
$\psi_J(w) = -1$ implies $\chi(w)$ for all $w \in F_m$ if and only if $t_I = 0$
for all $I \in \mcI_m$ with $|I \cap J|$ odd.

The condition is obviously necessary; we show that it is sufficient.
By Lemma~\ref{L:allt} we may assume that $t_I = 0$ for all 
$\emptyset \neq I \subseteq \{1, \dotsc, m\}$ with $|I \cap J|$ odd.
Let $w \in F_m$ with $\psi_J(w) = -1$. We prove $\chi(w) = 0$ by induction on 
$|w|$, proceeding along the lines of the proof of Theorem~\ref{T:tau}. Note 
that $\chi(w) = \veps_A(\tau(w))$, where 
$A = (\Delta(g_1), \dotsc, \Delta(g_m))$.
If $w$ is conjugate to $g_i^{-1}w'$ for some $i \in \{1, \dotsc, m\}$ and some 
$w' \in F_m$ with $|w'| = |w| - 1$, then 
$\chi(w) = \chi(g_iw') - \chi(g_i)\chi(w')$.
By induction, $\chi(w) = \chi(g_iw')$, since either $\psi_J(g_iw')) = -1$ or 
$\psi_J(g_i) = -1$. Similar considerations apply to the other cases of the 
proof of Theorem~\ref{T:tau}, so we conclude $\chi(w) = 0$.
\end{proof}

The definition of imprimitivity depends on the field of definition.
By abuse of notation we call a representation imprimitive if it is imprimitive 
after field extension.

\begin{cor}
Let 
\[
    \mfD := \bigcap_{\emptyset \neq J \subseteq \{1, \dotsc, m\}}
        \langle x_I \mid 
            I \in \mcI_m \text{ with } |I \cap J| \text{ odd\/} \rangle 
            \tri \Phi_m'.
\]
Let $P \in \Spec(\Phi_m')$, and let $\Delta \in \mcR(F_m, K)$ be a realization 
of~$t_P$, where $K$ is the quotient field of $\Phi_m'/P$.
Then $\Delta$ is imprimitive if and only if $\mfD \subseteq P$.

In other words, the imprimitive representations correspond to the elements of 
the closed subset 
\[
    \V(\mfD) = \{P \in \Spec(\Phi_m') \mid \mfD \subseteq P\}
\]
of $\Spec(\Phi_m')$.
\end{cor}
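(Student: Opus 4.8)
The plan is to deduce this corollary directly from Proposition~\ref{P:dihedral} by unwinding the ideal-theoretic content of its geometric statement. First I would recall that for a prime ideal $P \in \Spec(\Phi_m')$ and the realization $\Delta$ of $t_P$ over the quotient field $K$ of $\Phi_m'/P$, the condition ``$t_{P}$ is a zero of an ideal $\mathfrak{a} \tri \Phi_m'$'' is equivalent to ``$\mathfrak{a} \subseteq P$'': indeed $(t_P)_I = x_I + P$, so evaluating a polynomial $f \in \Phi_m'$ at $t_P$ gives $f + P$, which is zero in $K$ exactly when $f \in P$. In particular, $t_P$ is a zero of the ideal $\langle x_I \mid I \in \mcI_m,\ |I \cap J| \text{ odd}\rangle$ if and only if that ideal is contained in $P$, i.e. if and only if $(t_P)_I = 0$ for all $I \in \mcI_m$ with $|I\cap J|$ odd.

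Next I would address the ``imprimitive after field extension'' subtlety. Proposition~\ref{P:dihedral} is stated for algebraically closed $K$, whereas here $K$ is merely the quotient field of $\Phi_m'/P$; by the abuse of notation introduced just before the corollary, $\Delta$ is called imprimitive if it becomes imprimitive over $\overline{K}$. Passing from $\Delta$ to its scalar extension $\Delta \otimes_K \overline{K}$ does not change the trace tuple $t_P$ (the traces lie in $K \subseteq \overline{K}$ and are unchanged), and $\langle \Delta(g_1), \Delta(g_2)\rangle$ remains absolutely irreducible, so $\Delta\otimes\overline{K}$ is again in $\mcR(F_m, \overline{K})$ with the same trace tuple. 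Hence Proposition~\ref{P:dihedral} applies to $\Delta\otimes\overline{K}$: it is imprimitive if and only if there exists $\emptyset\neq J\subseteq\{1,\dotsc,m\}$ with $t_I = 0$ for all $I\in\mcI_m$ with $|I\cap J|$ odd.

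Combining the two paragraphs: $\Delta$ is imprimitive $\iff$ there exists $\emptyset \neq J \subseteq \{1,\dotsc,m\}$ such that $(t_P)_I = 0$ for all $I \in \mcI_m$ with $|I\cap J|$ odd $\iff$ there exists such a $J$ with $\langle x_I \mid I \in \mcI_m,\ |I\cap J|\text{ odd}\rangle \subseteq P$. Since $P$ is a prime ideal, $P$ contains an intersection of finitely many ideals if and only if it contains one of them; thus the existence of such a $J$ is equivalent to $\mfD = \bigcap_{\emptyset \neq J} \langle x_I \mid I \in \mcI_m,\ |I\cap J|\text{ odd}\rangle \subseteq P$. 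This is exactly the claim, and the reformulation in terms of $\V(\mfD)$ is then just the definition of $\V(\mfD)$. I expect the only real point requiring care to be the field-extension bookkeeping in the middle paragraph — making sure the trace tuple and the absolute irreducibility of the first two generators are genuinely preserved under scalar extension — while the rest is the standard dictionary between zeros of ideals and prime containment together with primality of $P$ distributing over finite intersections.
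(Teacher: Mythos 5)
Your proposal is correct and is precisely the argument the paper leaves implicit (the corollary is stated without proof, following immediately from Proposition~\ref{P:dihedral}). The three ingredients — the dictionary between vanishing of $t_P$ on an ideal and containment of that ideal in $P$, the invariance of the trace tuple and of absolute irreducibility under the extension $K \hookrightarrow \overline{K}$ (which is what the paper's "imprimitive after field extension" convention is designed to enable), and the fact that a prime ideal contains a finite intersection of ideals if and only if it contains one of them — are exactly what is needed, and you apply them correctly.
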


The dihedral subgroups of $\PSL(2,q)$ are precisely the images of imprimitive 
subgroups of $\SL(2,q)$.
Setting $\mfA_4 := J'(\Alt_4)$, $\mfS_4 := J'(\Sym_4)$, and 
$\mfA_5 := J'(\Alt_5)$, we can formulate the main result of this section.

\begin{thm}
\label{T:quotientbijection}
Let $G$ be a finitely presented group on $m$ generators.
The set of normal subgroups $N \tri G$ such that $G/N \cong \PSL(2, q)$ for 
some prime power $q > 5$ or $G/N \cong \PGL(2, q)$ for some prime power $q > 4$ 
and such that $\langle g_1N, g_2N \rangle$ is absolutely irreducible is in 
bijection to the set of $\Sigma_m$-orbits of maximal ideals of
\[
    Q(G) := \V(\I(G)) - \V(\mfD \cap \mfA_4 \cap \mfS_4 \cap \mfA_5) 
        \subseteq \Spec(\Phi_m').
\]
\end{thm}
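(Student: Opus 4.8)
The plan is to assemble the bijection by composing the correspondences already established in the excerpt and then checking that the combinatorial constraints defining $Q(G)$ cut out exactly the maximal ideals whose associated projective representations are surjective onto $\PSL(2,q)$ with $q>5$ or $\PGL(2,q)$ with $q>4$. First I would recall from Corollary~\ref{C:quotients} that the maximal elements of $\V(\I(G))/\Sigma_m$ are in bijection with $\bigcup_q \mcP(G, \F_q)/\PGammaL(2, q)$; since a maximal ideal $P$ of $\Phi_m'$ gives a finite quotient $\Phi_m'/P$, the maximal ideals of $\Phi_m'$ correspond exactly to projective representations over finite fields, and the $\Sigma_m$-orbit of $P$ records the fact that $\delta$ determines $\Delta$ only up to a sign system. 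The normal subgroup $N \tri G$ corresponds to $\ker(\delta)$ for such a $\delta$, and the condition that $\langle g_1N, g_2N\rangle$ be absolutely irreducible is precisely the condition defining $\mcP(G,K)$, so the data of $N$ together with absolute irreducibility of the first two generators is equivalent to the data of a surjective $\delta\colon G \to \im(\delta)$ up to $\PGammaL$-conjugacy.

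The core of the argument is then to show that, for a maximal ideal $P \in \V(\I(G))$ with realization $\Delta$ inducing $\delta\colon G \to \PSL(2,q)$, the image $\im(\delta)$ is $\PSL(2, q)$ (for the actual field $q$ generated by $t_P$, with $q > 5$) or $\PGL(2, q)$ (with $q > 4$) if and only if $P \notin \V(\mfD \cap \mfA_4 \cap \mfS_4 \cap \mfA_5)$. For this I would invoke Dickson's classification as quoted: an absolutely irreducible proper subgroup $U \lneqq \PSL(2,q)$ is either $\Alt_4$, $\Sym_4$, or $\Alt_5$; dihedral; or of the form $\PGL(2,q')$ or $\PSL(2,q')$ for a proper subfield. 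The subfield subgroups are automatically excluded because $q$ is taken to be the field generated by $t_P$: if $\im(\delta) \leq \PSL(2,q')$ with $q' \mid q$ properly, then all traces lie in $\F_{q'}$, contradicting that $\F_q$ is generated by $t_P$; and $\PGL(2,q')$ with $q' \neq q$ is handled the same way using that $\PGL(2,q')$ sits inside $\PSL(2,q'^2)$. This leaves exactly the $\Alt_4/\Sym_4/\Alt_5$ subgroups and the dihedral subgroups to exclude, which is what $\mfA_4, \mfS_4, \mfA_5$ and $\mfD$ do respectively: by Proposition~\ref{P:subgroupideals}, $P \supseteq \mfA_4 = J'(\Alt_4)$ exactly when $\im(\delta)\cong \Alt_4$, and similarly for $\Sym_4$ and $\Alt_5$; by the Corollary following Proposition~\ref{P:dihedral}, $P \supseteq \mfD$ exactly when $\Delta$ is imprimitive, i.e.\ when $\im(\delta)$ is dihedral. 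So $P \in \V(\mfD \cap \mfA_4 \cap \mfS_4 \cap \mfA_5) = \V(\mfD)\cup\V(\mfA_4)\cup\V(\mfS_4)\cup\V(\mfA_5)$ precisely when $\im(\delta)$ is one of the excluded small or dihedral groups, and removing this set leaves exactly the surjections onto $\PSL(2,q)$ or $\PGL(2,q)$.

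Two bookkeeping points need care. First, the bound $q > 5$ for $\PSL$ and $q > 4$ for $\PGL$: for small $q$, $\PSL(2,q)$ or $\PGL(2,q)$ may itself be isomorphic to one of $\Alt_4, \Sym_4, \Alt_5$ (e.g.\ $\PSL(2,4)\cong\PSL(2,5)\cong\Alt_5$, $\PSL(2,2)\cong\Sym_3$ dihedral, $\PSL(2,3)\cong\Alt_4$, $\PGL(2,3)\cong\Sym_4$), so those cases are absorbed into the excluded locus and the stated bijection only claims to capture the ``generic'' quotients; I would remark that this is exactly why the hypotheses on $q$ appear, and that under these bounds $\PSL(2,q)$ and $\PGL(2,q)$ are not isomorphic to any of the exceptional groups and are distinct from each other. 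Second, one must check that the maximality of the ideal is preserved on both sides: a maximal ideal corresponds to a representation over a \emph{finite} field (since $\Phi_m'/P$ is then a finite field), and conversely any projective representation over $\F_q$ with $\im(\delta)$ not among the excluded groups gives, via Theorem~\ref{T:maxspec} and Proposition~\ref{P:tracepresentation}, a maximal ideal containing $\I(G)$ but not $\mfD\cap\mfA_4\cap\mfS_4\cap\mfA_5$; I would note that ``maximal element of $Q(G)$'' and ``maximal ideal lying in $Q(G)$'' agree here because the excluded locus is closed, so $Q(G)$ is open in $\V(\I(G))$ and its maximal ideals are exactly the maximal ideals of $\Phi_m'$ that it contains. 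The main obstacle is the careful verification of the small-$q$ exclusions and confirming that the subfield-subgroup case genuinely cannot occur once $q$ is taken to be the trace field; the rest is a formal concatenation of the earlier results.
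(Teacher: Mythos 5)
Your proof follows essentially the same route as the paper's (implicit) argument: compose Corollary~\ref{C:quotients} with Dickson's classification, use Proposition~\ref{P:subgroupideals} and the corollary to Proposition~\ref{P:dihedral} to characterize containment of $\mfA_4,\mfS_4,\mfA_5,\mfD$, rule out subfield subgroups via the fact that $q=|\Phi_m'/M|$ is the character field, and observe that the bounds $q>5$ and $q>4$ absorb the exceptional isomorphisms. This is the intended argument, and the reasoning is sound.

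Two small points of care. First, your claim that ``maximal element of $Q(G)$'' and ``maximal ideal of $\Phi_m'$ lying in $Q(G)$'' agree ``because $Q(G)$ is open in $\V(\I(G))$'' is not justified by openness alone: one needs the Jacobson property of $\Phi_m'$ (closed points are dense), so that for any non-maximal prime $P\in Q(G)$, the closed set $\V(P)\cap\V(\mfD\cap\mfA_4\cap\mfS_4\cap\mfA_5)$ is a proper closed subset of $\V(P)$ and hence omits some maximal ideal of $\V(P)$. Second, you gloss over why the assignment $M\mapsto N=\ker(\delta)$ is a \emph{bijection} on the relevant subset: injectivity requires that two surjections $G\twoheadrightarrow\PSL(2,q)$ (resp.\ $\PGL(2,q)$) with the same kernel differ by an element of $\PGammaL(2,q)$, which uses $\Aut(\PSL(2,q))=\PGammaL(2,q)$ for $q>5$ (and the analogous fact for $\PGL$). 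This is precisely where the proper-subgroup cases would fail --- $\Alt_5$ has many non-$\PGammaL$-conjugate embeddings --- which is another way of seeing why they must be excised. Both are standard facts and the overall argument is correct.
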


\section{The \texorpdfstring{$\PSL$-$\PGL$}{PSL-PGL}-decision}
\label{S:pglpsl}

\begin{definition}
A finite group is of \textit{$\Ln_2$-type} if it is isomorphic to $\PSL(2, q)$ 
for some $q > 5$ or to $\PGL(2, q)$ for some $q > 4$.
A quotient of a finitely presented group is an \textit{$\Ln_2$-quotient} if it 
is of $\Ln_2$-type.
\end{definition}

Theorem~\ref{T:quotientbijection} gives a characterization of $\Ln_2$-quotients
purely in algebro-geometric terms. To decide whether an $\Ln_2$-quotient is 
isomorphic to $\PSL(2,q)$ or $\PGL(2,q)$ for some~$q$, we use arithmetic tools.

Let $M \in Q(G)$ be a maximal ideal, and let $t_M$ be the trace tuple defined 
by~$M$. Let $\Delta\colon F_m \to \SL(2, q)$ be a realization of $t_M$.
The field $\Phi_m'/M$ is generated by $t_M$, so $\Phi_m'/M$ is the character 
field of $\Delta$. Since representations over finite fields can be realized 
over the character field, we may assume $q = |\Phi_m'/M|$.
If $q$ is not a square, then by Dickson's classification $\Delta$ induces an 
epimorphism onto $\PSL(2, q)$, and if $q = r^2$, then $\Delta$ induces an 
epimorphism onto $\PSL(2, q)$ or $\PGL(2, r)$.
We give a criterion to decide which case occurs.
Note that $\Delta$ induces a projective representation onto $\PGL(2, r)$ if and 
only if the image of $\Delta$ is conjugate to a subgroup of $\GL(2, r)\F_q^*$, 
where $\F_q^*$ is identified with scalar matrices.

\begin{prop}
\label{P:PGL}
Let $q = r^2$ be a prime power. Let $t \in \F_q^{\mcI_m}$ be a trace tuple and 
$\Delta\colon F_m \to \SL(2, q)$ a realization of~$t$, and let $\alpha$ be a 
generator of $\Gal(\F_q/\F_r)$.
The image of $\Delta$ is conjugate to a subgroup of $\GL(2, r)\F_q^*$ if and 
only if ${}^\sigma t = {}^\alpha t$ for some $\sigma \in \Sigma_m$.
\end{prop}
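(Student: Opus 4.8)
The plan is to translate the geometric condition ``the image of $\Delta$ is conjugate into $\GL(2,r)\F_q^*$'' into a statement about trace tuples, using the $\Sigma_m$- and $\Gal$-actions introduced in Section~\ref{S:actions}, and then to invoke the uniqueness part of Corollary~\ref{C:alg_ind} to conclude. First I would observe that a subgroup $U \leq \GL(2,r)\F_q^*$ with $U \leq \SL(2,q)$ consists of matrices of the form $\mu B$ with $B \in \GL(2,r)$ and $\mu \in \F_q^*$; for such a matrix $\det(\mu B) = \mu^2 \det(B) = 1$, and applying $\alpha$ (which fixes $\GL(2,r)$ pointwise) gives $\alpha(\mu B) = \alpha(\mu) B = (\alpha(\mu)/\mu)(\mu B)$. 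Writing $\zeta := \alpha(\mu)/\mu$, note $\zeta^2 = \alpha(\mu^2)/\mu^2 = \alpha(\det B)^{-1}/\det(B)^{-1} = 1$ since $\det B \in \F_r^*$ is fixed by $\alpha$; hence $\zeta = \pm 1$. Thus for each generator $A_i = \Delta(g_i)$ of the image, $\alpha(A_i) = \sigma_i A_i$ with $\sigma_i \in \{\pm 1\}$, i.e. ${}^\alpha\Delta = {}^\sigma\Delta$ as representations for $\sigma := (\sigma_1, \dotsc, \sigma_m)$, and taking traces yields ${}^\alpha t = {}^\sigma t$. This gives one implication, although a little care is needed: conjugating the image into $\GL(2,r)\F_q^*$ means replacing $\Delta$ by $C\Delta C^{-1}$ for some $C \in \GL(2,q)$, so the identity ${}^\alpha t = {}^\sigma t$ should be read for the trace tuple of $C\Delta C^{-1}$; but conjugation does not change traces, so ${}^\alpha t = {}^\sigma t$ holds for $t$ itself.

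\textbf{The converse.} Suppose ${}^\sigma t = {}^\alpha t$ for some $\sigma \in \Sigma_m$. By the compatibility of the actions with the bijections (the remark following the definitions in Section~\ref{S:actions}), the representation ${}^\sigma\Delta$ and the representation ${}^\alpha\Delta$ have the same trace tuple, namely ${}^\sigma t = {}^\alpha t$. Here is where Corollary~\ref{C:alg_ind} enters: since $\Delta|_{\langle g_1,g_2\rangle}$ is absolutely irreducible (so $\rho(t_1,t_2,t_{12}) \neq 0$, hence also $\rho$ is nonzero on the common trace tuple after any sign change), a trace tuple over $\F_q$ has a realization that is unique up to conjugation by $\GL(2,q)$. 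Therefore ${}^\sigma\Delta$ and ${}^\alpha\Delta$ are conjugate: there exists $C \in \GL(2,q)$ with $C\,{}^\sigma\Delta(w)\,C^{-1} = {}^\alpha\Delta(w) = \alpha(\Delta(w))$ for all $w$. Now set $\Delta' := {}^\sigma\Delta$; then $\alpha(\Delta'(w)) = w(\sigma)^{-1}\,\alpha(\Delta(w))$ differs from $C\Delta'(w)C^{-1}$ only by the scalar $w(\sigma)^{\pm 1} \in \{\pm 1\}$, so one checks that $\Delta''(w) := C'\,\Delta(w)\,C'^{-1}$ for a suitable $C'$ satisfies $\alpha(\Delta''(w)) = \pm\Delta''(w)$ for each generator. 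One then argues that a subgroup of $\SL(2,q)$ which is $\alpha$-semi-invariant in this sense (each element mapped to a scalar multiple of itself by $\alpha$) lies in $\GL(2,r)\F_q^*$: indeed if $\alpha(B) = \zeta B$ with $\zeta \in \F_q^*$, then choosing $\mu \in \F_q^*$ with $\alpha(\mu)/\mu = \zeta$ (possible by Hilbert~90, or directly since the norm map $\F_q^* \to \F_r^*$ is surjective) gives $\alpha(\mu^{-1}B) = \mu^{-1}B$, so $\mu^{-1}B \in \GL(2,r)$ and $B \in \F_q^*\GL(2,r)$.

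\textbf{The main obstacle.} I expect the delicate point to be the bookkeeping in the converse: going from ``${}^\sigma\Delta$ and $\alpha\circ\Delta$ are $\GL(2,q)$-conjugate'' to ``some $\GL(2,q)$-conjugate of $\Delta$ has image inside $\GL(2,r)\F_q^*$''. The sign ambiguities from $\sigma$ and the fact that the conjugating matrix $C$ interacts with $\alpha$ (one really wants a cocycle-type statement, $\alpha(C)C = $ scalar, to pin down the field of definition) require a careful argument rather than a routine one, and this is essentially a descent computation dressed up in elementary language; the role of Hilbert~90 / surjectivity of the norm is to clear the scalar. Everything else --- the structure of $\GL(2,r)\F_q^*\cap\SL(2,q)$, the triviality of $\alpha$ on $\GL(2,r)$, and the appeal to Corollary~\ref{C:alg_ind} for uniqueness of realizations --- is straightforward. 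It is worth remarking that the hypothesis $q = r^2$ makes $\Gal(\F_q/\F_r)$ of order $2$, which is exactly why the semi-invariance scalars $\zeta$ satisfy $\zeta^2 = 1$ and the analysis stays clean.
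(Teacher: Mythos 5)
Your argument is genuinely different from the paper's. The paper's proof is very short because it simply cites an external result ([character, Theorem~4.1]) asserting that the image of $\Delta$ is conjugate into $\GL(2,r)\F_q^*$ if and only if ${}^\sigma\chi = {}^\alpha\chi$ for some $\sigma\in\Sigma_m$ (a condition on the full character, i.e.\ on $\tau(w)$ for every word $w$), and then reduces that character-level condition to the finite trace-tuple condition ${}^\sigma t_I = {}^\alpha t_I$, $I\in\mcI_m$, using Lemma~\ref{L:allt} and the inductive construction of the $\tau(w)$ from Theorem~\ref{T:tau}, exactly as in the proof of Proposition~\ref{P:dihedral}. You instead work self-containedly: your forward direction is a clean elementary computation ($\alpha(A_i)=\sigma_i A_i$ with $\sigma_i^2=1$ because $\det$ pins down $\mu^2\in\F_r^*$), and for the converse you appeal to the uniqueness part of Corollary~\ref{C:alg_ind} to produce a conjugating matrix $C$ with $\alpha(\Delta(w)) = w(\sigma)C\Delta(w)C^{-1}$, and then invoke a descent. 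Your route buys self-containedness and also neatly sidesteps the ``character versus trace tuple'' reduction, since Corollary~\ref{C:alg_ind} already operates at the trace-tuple level.

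The one incompletely argued step is exactly the one you flag: the $\GL_2$-descent. Let me spell it out, because it is not quite a one-liner. Applying $\alpha$ to $\alpha(\Delta(w)) = w(\sigma)C\Delta(w)C^{-1}$ and using $\alpha^2=1$ gives $\Delta(w) = \alpha(C)C\,\Delta(w)\,(\alpha(C)C)^{-1}$, so by absolute irreducibility $\alpha(C)C = \mu I_2$ with $\mu\in\F_q^*$; one checks $\alpha(\mu)=\mu$, so $\mu\in\F_r^*$. Surjectivity of the norm $\F_q^*\to\F_r^*$ lets you rescale $C$ to $C'$ with $\alpha(C')C' = I_2$, i.e.\ a genuine cocycle. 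At this point you still need $H^1(\Gal(\F_q/\F_r),\GL_2(\F_q)) = 1$ (the non-abelian Hilbert~90) to write $C' = \alpha(D)^{-1}D$; only then does $\Delta''(w) := D\Delta(w)D^{-1}$ satisfy $\alpha(\Delta''(w)) = w(\sigma)\Delta''(w)$, and your final observation (that $\alpha$-semi-invariant matrices in $\SL(2,q)$ lie in $\GL(2,r)\F_q^*$) closes the argument. Your sketch conflates the scalar Hilbert~90 you use at the very end with this matrix-valued descent; both are needed, and it is the matrix-valued one that [character, Theorem~4.1] packages for the paper. So the approach is sound and arguably more illuminating, but the converse is not fully proved as written.
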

\begin{proof}
Let $\chi\colon F_m \to \F_q\colon w \mapsto \tr(\Delta(w))$ be the character 
of~$\Delta$. By \cite[Theorem~4.1]{character}, the image of $\Delta$ is 
conjugate to a subgroup of $\GL(2, r)\F_q^*$ if and only if there exists 
$\sigma \in \Sigma_m$ with ${}^\sigma\chi = {}^\alpha\chi$.
Using Lemma~\ref{L:allt} and the construction of the $\tau(w)$ in the proof of 
Theorem~\ref{T:tau}, we can show as in the proof of 
Proposition~\ref{P:dihedral} that this is equivalent to 
${}^\sigma t_I = {}^\alpha t_I$ for all $I \in \mcI_m$.
\end{proof}

\begin{remark}
Let $M \tri \Phi_m'$ be a maximal ideal, and let 
$\sigma \in \Stab_{\Sigma_m}(M)$. Then 
$\Phi_m'/M \to \Phi_m'/M\colon x_I + M \mapsto {}^\sigma x_I + M$
defines a Galois automorphism.
\end{remark}

\begin{cor}
Let $M \tri \Phi_m'$ be a maximal ideal such that $|\Phi_m'/M| = q = r^2$ is a 
square. Let $t = t_M$ be the trace tuple of $M$, and let 
$\Delta\colon F_m \to \SL(2, q)$ be a realization of $t_M$.
The image of $\Delta$ is conjugate to a subgroup of $\GL(2, r)\F_q^*$ if and 
only if $M$ has a non-trivial stabilizer in $\Sigma_m$.
\end{cor}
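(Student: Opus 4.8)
The plan is to derive this corollary directly from Proposition~\ref{P:PGL} together with the preceding remark, which identifies the stabilizer $\Stab_{\Sigma_m}(M)$ with a group of Galois automorphisms of $\Phi_m'/M$. The key point is that since $\Delta$ can be realized over its character field, and that field is $\Phi_m'/M = \F_q$ with $q = r^2$, the relevant Galois group $\Gal(\F_q/\F_r)$ has order $2$; so ``${}^\sigma t = {}^\alpha t$ for some $\sigma \in \Sigma_m$'' translates into a statement about $M$ being fixed by a nontrivial sign change.

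First I would recall that $t = t_M$ means $t_I = x_I + M$ for $I \in \mcI_m$, and that for $\sigma \in \Sigma_m$ one has ${}^\sigma t = t_{({}^\sigma M)}$ by the compatibility of actions recorded after the definitions in Section~\ref{S:actions}. Hence ${}^\sigma t = t$ as trace tuples over $\F_q$ precisely when ${}^\sigma M = M$, i.e.\ $\sigma \in \Stab_{\Sigma_m}(M)$. Next, since $\Gal(\F_q/\F_r) = \langle \alpha \rangle$ is generated by the Frobenius $x \mapsto x^r$ and has order $2$, the condition ${}^\sigma t = {}^\alpha t$ of Proposition~\ref{P:PGL} can be rewritten: applying $\alpha^{-1} = \alpha$ to both sides and using that $\Sigma_m$ and $\Gal(\F_q)$ actions commute, it is equivalent to ${}^{\alpha}({}^\sigma t) = t$, which reads ${}^{\sigma'} t = t$ for the trace tuple ${}^{\alpha} t$. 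The cleaner route is to argue directly: ${}^\sigma t = {}^\alpha t$ forces $\sigma \neq (1, \dotsc, 1)$, because $\alpha$ is nontrivial on $\F_q$ and $t$ generates $\F_q$, so $t_I \neq \alpha(t_I)$ for at least one $I$; hence if such a $\sigma$ exists it must be nontrivial, and conversely any nontrivial $\sigma \in \Stab_{\Sigma_m}(M)$ gives, via the Remark, a nontrivial automorphism of $\F_q$ fixing $\F_r$, which must be $\alpha$, so that ${}^\sigma x_I + M = \alpha(x_I + M)$ for all $I \in \mcI_m$, i.e.\ ${}^\sigma t = {}^\alpha t$.

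Assembling these observations: the image of $\Delta$ is conjugate to a subgroup of $\GL(2,r)\F_q^*$ iff (by Proposition~\ref{P:PGL}) ${}^\sigma t = {}^\alpha t$ for some $\sigma \in \Sigma_m$, iff (by the paragraph above) there is a nontrivial $\sigma$ with ${}^\sigma M = M$, iff $M$ has nontrivial stabilizer in $\Sigma_m$. I would also remark in passing that $M$ cannot have a nontrivial stabilizer acting \emph{trivially} on $\F_q$: since $\F_q$ is generated by $t$ over the prime field, ${}^\sigma$ fixing every $t_I$ would force $\sigma = (1,\dots,1)$. The main obstacle, such as it is, is bookkeeping: making sure the translation between ``${}^\sigma t = {}^\alpha t$ as tuples in $\F_q^{\mcI_m}$'' and ``$\sigma$ induces the Frobenius on $\Phi_m'/M$ via the Remark'' is airtight, in particular that the nontrivial stabilizer necessarily induces $\alpha$ rather than some other automorphism — this is where the hypothesis $q = r^2$, so $[\F_q : \F_r] = 2$, is essential, since it leaves no room for a different nontrivial automorphism fixing $\F_r$.
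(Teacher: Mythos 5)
Your overall route is exactly the paper's: the corollary is stated there without proof, as an immediate combination of Proposition~\ref{P:PGL} with the preceding Remark, and your write-up is the natural unwinding of that. The forward direction is airtight: if ${}^\sigma t = {}^\alpha t$ then $\sigma$ is nontrivial because $t$ generates $\F_q \supsetneq \F_r$ so $\alpha$ moves some $t_I$, and ${}^\sigma M = P_{({}^\sigma t)} = P_{({}^\alpha t)} = P_t = M$ since $\alpha$ is injective.

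The converse, however, rests on your ``in passing'' claim that a nontrivial $\sigma \in \Stab_{\Sigma_m}(M)$ must induce a nontrivial automorphism of $\F_q$, and your justification of it is wrong. Generation of $\F_q$ by $t$ does not force ${}^\sigma t \neq t$: writing $J = \{i \mid \sigma_i = -1\}$, the condition ${}^\sigma t_I = t_I$ for all $I \in \mcI_m$ says only that $2t_I = 0$ whenever $|I \cap J|$ is odd, which is satisfied as soon as those particular $t_I$ vanish (the remaining coordinates are free to generate $\F_q$). By Proposition~\ref{P:dihedral} this is precisely the imprimitive (dihedral) case, and it also happens for \emph{every} $\sigma$ in characteristic~$2$, where $-x_I \equiv x_I \pmod M$; in both situations $M$ has a nontrivial stabilizer inducing the identity on $\F_q$, and the equivalence can genuinely fail. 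So your proof needs the additional hypothesis that $M$ is an $\Ln_2$-ideal (so $\mfD \not\subseteq M$) in odd characteristic — which is how the corollary is actually used, e.g.\ in Theorem~\ref{T:quotientbijectionfull} — together with the observation that excluding $\V(\mfD)$ rules out ${}^\sigma t = t$ for $\sigma \neq 1$. To be fair, the paper's own statement of the corollary omits these hypotheses as well, so the defect is inherited rather than introduced; but since your argument explicitly asserts the false intermediate claim, you should either add the hypothesis or replace the justification. The remaining point — that a nontrivial induced automorphism must be $\alpha$ — is fine, though the reason is that it has order dividing $2$ (so it is the unique involution in $\Gal(\F_q/\F_p)$), not merely that it ``fixes $\F_r$.''
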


Together with Theorem~\ref{T:quotientbijection} we get the following result.

\begin{thm}
\label{T:quotientbijectionfull}
Let $G$ be a finitely presented group on $m$ generators.
The set of normal subgroups $N \tri G$ such that $G/N \cong \PSL(2, q)$ for 
some odd $q > 5$ with $\langle g_1 N, g_2 N\rangle$ absolutely irreducible is 
in bijection to the regular $\Sigma_m$-orbits of maximal ideals of~$Q(G)$.
The set of normal subgroups $N \tri G$ such that $G/N \cong \PGL(2, q)$ for 
some $q > 4$ with $\langle g_1 N, g_2 N\rangle$ absolutely irreducible is in 
bijection to the $\Sigma_m$-orbits of maximal ideals of $Q(G)$ with a 
stabilizer of order~$2$.
\end{thm}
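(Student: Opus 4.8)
The plan is to combine Theorem~\ref{T:quotientbijection} with the arithmetic refinements from this section, distinguishing between the $\PSL$ and $\PGL$ cases by analysing the $\Sigma_m$-stabilizer of each maximal ideal. First I would fix a maximal ideal $M \in Q(G)$ and recall, as in the discussion preceding Proposition~\ref{P:PGL}, that $q := |\Phi_m'/M|$ is the size of the character field of any realization $\Delta\colon F_m \to \SL(2,q)$ of $t_M$, and that by Dickson's classification (together with the fact that $M \notin \V(\mfD \cap \mfA_4 \cap \mfS_4 \cap \mfA_5)$, so the image is neither dihedral nor one of $\Alt_4, \Sym_4, \Alt_5$) the induced projective representation $\delta$ is surjective onto $\PSL(2,q)$ when $q$ is not a square, and onto either $\PSL(2,q)$ or $\PGL(2,r)$ when $q = r^2$.

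Next I would translate the group-theoretic condition ``$\im(\delta) \cong \PGL(2,r)$'' into the stabilizer condition. By the remark preceding Proposition~\ref{P:PGL}, the image of $\Delta$ (in $\SL$) is conjugate into $\GL(2,r)\F_q^*$ precisely when $\im(\delta) \cong \PGL(2,r)$ or $\im(\delta) \cong \PSL(2,r) \cong \Alt_5$ (the latter excluded here since $M \notin \V(\mfA_5)$ and $q$ must be large enough); so for $M \in Q(G)$ this is equivalent to $\im(\delta) \cong \PGL(2,r)$. Proposition~\ref{P:PGL} says this holds iff ${}^\sigma t_M = {}^\alpha t_M$ for some $\sigma \in \Sigma_m$, where $\alpha$ generates $\Gal(\F_q/\F_r)$. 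By the remark before the corollary, such a $\sigma$ lies in $\Stab_{\Sigma_m}(M)$ and induces the Galois automorphism $\alpha$ on $\Phi_m'/M \cong \F_q$; conversely any $\sigma \in \Stab_{\Sigma_m}(M)$ induces a Galois automorphism, and since $\Gal(\F_q/\F_r)$ is the unique subgroup of order $2$ of the cyclic group $\Gal(\F_q/\F_p)$, a nontrivial such $\sigma$ gives exactly $\alpha$ (up to the fact that $\Sigma_m$ acts with order dividing $2$ on $M$, because $\sigma^2$ acts trivially on every $x_I$). Hence $\Stab_{\Sigma_m}(M)$ is trivial iff $\im(\delta) \cong \PSL(2,q)$ with $q$ not forced to be $r^2$ in the $\PGL$ way, and has order $2$ iff $\im(\delta) \cong \PGL(2,r)$.

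It then remains to assemble this with Theorem~\ref{T:quotientbijection}. That theorem gives a bijection between the normal subgroups $N$ with $G/N$ of $\Ln_2$-type and absolutely irreducible $\langle g_1N, g_2N\rangle$, on the one hand, and $\Sigma_m$-orbits of maximal ideals of $Q(G)$ on the other. Splitting the orbits according to stabilizer order — trivial versus order $2$ — and using the previous paragraph, an orbit with trivial stabilizer (a regular orbit) corresponds to $G/N \cong \PSL(2,q)$, and one must check that $q$ is odd and $q > 5$: the excluded small cases $\PSL(2,4) \cong \PSL(2,5) \cong \Alt_5$ are already removed by $M \notin \V(\mfA_5)$, and $q$ even forces $\PSL(2,q) = \PGL(2,q)$, which has even more automorphisms — more to the point, when $q = 2^f$ is even and a square, the field automorphism $\alpha$ of order $2$ again yields a nontrivial element of $\Stab_{\Sigma_m}(M)$, so these ideals land in the order-$2$ case; and when $q$ is even and not a square there is no such issue, but then $G/N \cong \PGL(2,q)$ trivially, again consistent. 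An orbit with stabilizer of order $2$ corresponds to $G/N \cong \PGL(2,r)$ with $q = r^2 > 4$, i.e.\ $r > 2$. The main obstacle is the bookkeeping around even $q$ and the small exceptional isomorphisms $\PSL(2,q) \cong \PGL(2,q)$: one has to be careful that ``regular orbit'' is matched with the genuine $\PSL$ case and ``stabilizer of order $2$'' with the genuine $\PGL$ case, using that the only source of a nontrivial stabilizer for $M \in Q(G)$ is a field automorphism of order $2$, which forces $q$ to be a square and the image to be $\PGL(2,\sqrt q)$. Everything else is a direct combination of the cited results.
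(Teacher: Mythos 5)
Your overall plan — combine Theorem~\ref{T:quotientbijection} with Proposition~\ref{P:PGL} and the corollary on stabilizers — is exactly the paper's proof (which consists of little more than that one sentence). The first two paragraphs of your write-up flesh out the argument correctly for odd characteristic: for $M \in Q(G)$ one shows every nontrivial $\sigma \in \Stab_{\Sigma_m}(M)$ induces a \emph{nontrivial} Galois automorphism of $\Phi_m'/M$ (if it induced the identity then, since $-1 \neq 1$, the trace tuple would vanish on all $I$ with $|I \cap J|$ odd and $M$ would land in $\V(\mfD)$), so the stabilizer has order $1$ or $2$, and Proposition~\ref{P:PGL} with its corollary converts this into the $\PSL$/$\PGL$ dichotomy.

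The gap is in your third paragraph, on even $q$. You assert that when $q$ is even and not a square the orbit is regular, and when $q$ is even and a square the stabilizer has order~$2$. Both claims are false: if $\operatorname{char}(\Phi_m'/M) = 2$ then $-1 = 1$ in $\Phi_m'/M$, so ${}^\sigma t_I = \bigl(\prod_{i\in I}\sigma_i\bigr) t_I = t_I$ for \emph{every} $\sigma \in \Sigma_m$, hence ${}^\sigma M = M$ for all $\sigma$ and $\Stab_{\Sigma_m}(M) = \Sigma_m$ has order $2^m$. Thus a maximal ideal of $Q(G)$ in characteristic~$2$ lies in neither the regular nor the order-$2$ bucket, so your ``again consistent'' conclusions do not hold. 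The escape route you sketch — that $M \notin \V(\mfD \cap \mfA_4 \cap \mfS_4 \cap \mfA_5)$ rules out the small cases — does not help here: for $q = 2^f \geq 8$, $\PSL(2,q) = \PGL(2,q)$ is a genuine $\Ln_2$-quotient and the corresponding $M$ can perfectly well lie in $Q(G)$. What this actually reveals is that the statement of Theorem~\ref{T:quotientbijectionfull} (and the corollary it rests on) implicitly excludes characteristic~$2$: in even characteristic the image is never conjugate into $\GL(2,r)\F_q^*$ (Proposition~\ref{P:PGL} gives ${}^\alpha t \neq t = {}^\sigma t$ for all $\sigma$), yet the stabilizer is all of $\Sigma_m$, contradicting the corollary as literally stated. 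A correct account should treat the characteristic-$2$ orbits (stabilizer of order $2^m$) as a third class corresponding to $G/N \cong \PSL(2,2^f) = \PGL(2,2^f)$, or restrict the statement to odd characteristic outright, rather than absorb them into the order-$2$ bucket as you attempt.
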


When dealing with infinitely many $\Ln_2$-quotients, the following 
reformulation in terms of trace tuples if often useful.

\begin{cor}
\label{C:bijectiontuples}
Let $G$ be a finitely presented group on $m$ generators, and let $q = p^d$ be a 
prime power. If $q > 5$ is odd, then the set of normal subgroups $N \tri G$ 
such that $G/N \cong \PSL(2,q)$ with $\langle g_1N, g_2N \rangle$ absolutely 
irreducible is in bijection to the regular $\Sigma_m \times \Gal(\F_q)$-orbits 
of zeroes $t \in \F_q^{\mcI_m}$ of $Q(G)$ with $\F_q = \F_p[t]$.
If $q > 4$, then the set of normal subgroups $N \tri G$ such that 
$G/N \cong \PGL(2,q)$ with $\langle g_1N, g_2N \rangle$ absolutely irreducible 
is in bijection to the $\Sigma_m \times \Gal(\F_{q^2})$-orbits of zeroes 
$t \in \F_{q^2}^{\mcI_m}$ of $Q(G)$ with $\F_{q^2} = \F_p[t]$ having stabilizer 
of order~$2$.
\end{cor}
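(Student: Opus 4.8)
The plan is to deduce Corollary~\ref{C:bijectiontuples} from Theorem~\ref{T:quotientbijectionfull} by translating the statement about $\Sigma_m$-orbits of maximal ideals of $Q(G)$ into a statement about orbits of trace tuples over finite fields, using the bijection of Theorem~\ref{T:maxspec} (and its $\Sigma_m$-equivariant refinement). First I would recall that by Theorem~\ref{T:maxspec}, maximal ideals of $\Phi_m'$ correspond to equivalence classes of trace tuples over finite fields, and that a maximal ideal $M$ lies in $Q(G)$ exactly when the corresponding trace tuple is a zero of $\I(G)$ but not a zero of $\mfD \cap \mfA_4 \cap \mfS_4 \cap \mfA_5$, i.e.\ a zero of $Q(G)$ in the sense of the corollary. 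The key point is that if $M$ has residue field $\F_q$, then a zero $t \in \F_q^{\mcI_m}$ of $Q(G)$ with $\F_q = \F_p[t]$ is precisely a representative of the equivalence class corresponding to $M$ \emph{together} with a choice of embedding $\Phi_m'/M \hookrightarrow \F_q$; two such zeroes represent the same $M$ iff they differ by an element of $\Gal(\F_q)$. Thus $\Sigma_m$-orbits of maximal ideals in $Q(G)$ with residue field of size $q$ correspond bijectively to $\Sigma_m \times \Gal(\F_q)$-orbits of zeroes $t \in \F_q^{\mcI_m}$ of $Q(G)$ with $\F_q = \F_p[t]$; here one uses that the $\Sigma_m$-action on ideals defined in Section~\ref{S:actions} matches the $\Sigma_m$-action ${}^\sigma t_I = (\prod_{i\in I}\sigma_i)t_I$ on trace tuples under the bijection, which is exactly the content of the Remark following that definition.

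Next I would handle the stabilizer condition, which is where the two cases diverge. By Theorem~\ref{T:quotientbijectionfull}, $\PSL(2,q)$-quotients (odd $q>5$) correspond to \emph{regular} $\Sigma_m$-orbits of maximal ideals of $Q(G)$, and $\PGL(2,q)$-quotients ($q>4$) to orbits with stabilizer of order $2$. I need to check that passing to the combined $\Sigma_m \times \Gal(\F_q)$-action on tuples does not disturb the regularity/order-$2$-stabilizer bookkeeping. The crucial observation is that the $\Sigma_m$-action and the $\Gal$-action on trace tuples commute (both are defined entrywise, and scalar multiplication by $\pm 1$ commutes with any field automorphism), so $\Stab_{\Sigma_m \times \Gal(\F_q)}(t)$ projects onto $\Stab_{\Sigma_m}(M)$ with kernel contained in $\{1\}\times\Gal(\F_q)$; since $t$ generates $\F_q$ over $\F_p$, the pairs $(\sigma,\alpha)$ fixing $t$ are determined by $\sigma$ (given $\sigma$, the condition ${}^\sigma t = {}^{\alpha^{-1}}t$ pins down $\alpha$ because a Galois automorphism is determined by its values on a generating set), so this projection is injective. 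Hence $|\Stab_{\Sigma_m \times \Gal(\F_q)}(t)| = |\Stab_{\Sigma_m}(M)|$, and ``regular $\Sigma_m$-orbit of $M$'' translates verbatim to ``regular $\Sigma_m\times\Gal(\F_q)$-orbit of $t$'', while ``$\Sigma_m$-stabilizer of order $2$'' translates to ``$\Sigma_m\times\Gal(\F_q)$-stabilizer of order $2$''.

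For the $\PGL(2,q)$ statement there is the additional subtlety that the natural field is $\F_{q^2}$ rather than $\F_q$, because a realization $\Delta\colon F_m \to \PSL(2,\overline{\F}_p)$ with image $\PGL(2,q)$ lifts to $\SL(2,q^2)$ (the smallest $\SL$ containing a copy of $\PGL(2,q)$), and its character field is then $\F_{q^2}$; this is exactly the situation analyzed in Proposition~\ref{P:PGL} and its Corollary, where $\PGL(2,r)$-type images inside $\PSL(2,r^2)$ are detected by a non-trivial $\Sigma_m$-stabilizer. So I would invoke that Corollary to say: the maximal ideals $M$ of $Q(G)$ with $|\Phi_m'/M| = q^2$ and $\Sigma_m$-stabilizer of order $2$ are precisely those whose realization has image $\PGL(2,q)$, and applying the tuple-translation above with $q$ replaced by $q^2$ gives the claimed bijection with $\Sigma_m\times\Gal(\F_{q^2})$-orbits of zeroes $t \in \F_{q^2}^{\mcI_m}$ of $Q(G)$ with $\F_{q^2} = \F_p[t]$ and stabilizer of order $2$.

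The main obstacle I anticipate is not any single deep step but rather the careful verification that all three group actions in play ($\Sigma_m$ on ideals, $\Sigma_m$ on tuples, $\Gal$ on tuples) are mutually compatible under the chain of bijections from Sections~\ref{S:tracetuples}--\ref{S:pglpsl}, and in particular that the stabilizer-order equality $|\Stab_{\Sigma_m\times\Gal}(t)| = |\Stab_{\Sigma_m}(M)|$ holds because $t$ generates its residue field. Everything else is a direct repackaging of Theorem~\ref{T:quotientbijectionfull} and the Remark in Section~\ref{S:actions}; once the action-compatibility is nailed down, the corollary follows formally.
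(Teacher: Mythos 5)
The paper itself gives no proof of this corollary; it presents it as an immediate reformulation of Theorem~\ref{T:quotientbijectionfull}, so there is no explicit argument to compare against. Your proposal supplies the missing details correctly: the passage from $\Sigma_m$-orbits of maximal ideals to $\Sigma_m\times\Gal(\F_q)$-orbits of generating trace tuples is exactly Theorem~\ref{T:maxspec} together with the observation that the ambiguity in choosing a concrete zero $t\in\F_q^{\mcI_m}$ with $\F_p[t]=\F_q$ for a given $M$ is precisely a $\Gal(\F_q)$-torsor, and your stabilizer computation (the projection $\Stab_{\Sigma_m\times\Gal(\F_q)}(t)\to\Stab_{\Sigma_m}(M)$ is injective because $t$ generates $\F_q$, and surjective because ${}^\sigma t$ and $t$ are then Galois-conjugate zeroes of the same $M$) is the key point that justifies translating ``regular orbit'' and ``stabilizer of order $2$'' verbatim. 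The handling of the $\PGL$ case via the identification $|\Phi_m'/M|=q^2$ matches the discussion preceding Proposition~\ref{P:PGL}. This is the natural and, as far as I can tell, the intended proof.
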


Let $G/N_1$ and $G/N_2$ be $\Ln_2$-quotients of $G$ with $N_1 \neq N_2$.
What is the isomorphism type of $G/N_1 \cap N_2$?
Clearly, if $G/N_1$ or $G/N_2$ is simple, then 
$G/N_1 \cap N_2 \cong G/N_1 \times G/N_2$.
This leaves the case that both $G/N_i$ are non-simple, that is, 
$G/N_i \cong \PGL(2, q_i)$ for some prime powers~$q_i$.

\begin{prop}
Let $G$ be a finitely presented group on $m$ generators.
Let $M_1$ and $M_2$ be maximal ideals of $Q(G)$ with stabilizers 
$\langle \sigma^{(i)} \rangle \leq \Sigma_m$ of order~$2$, and let 
$N_1, N_2 \tri G$ be normal subgrops corresponding to $M_1, M_2$ in the 
bijection of Theorem~\ref{T:quotientbijectionfull}.
Let $q_1, q_2$ be prime powers with $G/N_i \cong \PGL(2,q_i)$.
If $N_1 \neq N_2$, then
\[
    G/N_1 \cap N_2 \cong \begin{cases}
        \PGL(2,q_1) \Yup^{\Cyc_2} \PGL(2,q_2) & 
            \text{ if } \sigma^{(1)} = \sigma^{(2)},\\
        \PGL(2,q_1) \times \PGL(2,q_2) & \text{ otherwise.}
    \end{cases}
\]
\end{prop}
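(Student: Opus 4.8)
The plan is to realize $G/(N_1 \cap N_2)$ as a subdirect product of $\PGL(2,q_1)$ and $\PGL(2,q_2)$, to determine which subdirect product it is via Goursat's lemma, and then to recognize the single $\Sigma_m$-parameter governing the outcome as exactly $\sigma^{(i)}$.

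First, since $G/N_i \cong \PGL(2,q_i)$ is non-simple, $q_i$ is odd and $q_i > 4$; hence $\PSL(2,q_i)$ is simple and is the unique subgroup of index $2$ in $\PGL(2,q_i)$, so the normal subgroups of $\PGL(2,q_i)$ are exactly $1$, $\PSL(2,q_i)$ and $\PGL(2,q_i)$. The homomorphism $g(N_1 \cap N_2) \mapsto (gN_1, gN_2)$ embeds $G/(N_1 \cap N_2)$ into $\PGL(2,q_1) \times \PGL(2,q_2)$, with image $H$ surjecting onto both factors, so Goursat's lemma applies: $H$ is the preimage of the graph of an isomorphism $\PGL(2,q_1)/A_0 \to \PGL(2,q_2)/B_0$ for normal subgroups $A_0 \tri \PGL(2,q_1)$ and $B_0 \tri \PGL(2,q_2)$. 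If $A_0 = 1$, then comparing orders forces $B_0 = 1$, so $H$ is the graph of an isomorphism $G/N_1 \to G/N_2$, and comparing the kernels of the two projections gives $N_1 = N_2$, contrary to hypothesis. If $A_0 = \PGL(2,q_1)$, then $H = \PGL(2,q_1) \times \PGL(2,q_2)$. If $A_0 = \PSL(2,q_1)$, then $B_0 = \PSL(2,q_2)$, the unique isomorphism $\Cyc_2 \to \Cyc_2$ is used, and $H = \PGL(2,q_1) \Yup^{\Cyc_2} \PGL(2,q_2)$.

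Next I would decide which case occurs in terms of $G$. Let $K_i \le G$ denote the preimage of $\PSL(2,q_i)$ under $G \to G/N_i$, an index-$2$ subgroup containing $N_i$. A direct computation identifies $A_0$ with the image $N_1N_2/N_1$ of $N_2$ in $G/N_1$; since $N_1 \neq N_2$ and each $G/N_i$ has order greater than $2$, the subgroups $N_1$ and $N_2$ are incomparable (were, say, $N_2 \subseteq N_1$, then $N_1/N_2$ would be $\PSL(2,q_2)$ or $\PGL(2,q_2)$, making $G/N_1$ a group of order at most $2$). Hence $A_0 \in \{\PSL(2,q_1), \PGL(2,q_1)\}$, with $A_0 = \PSL(2,q_1)$ iff $N_1N_2 = K_1$; symmetrically $B_0 = \PSL(2,q_2)$ iff $N_1N_2 = K_2$, so --- using that $A_0 = \PSL(2,q_1)$ iff $B_0 = \PSL(2,q_2)$ --- the fibre-product case occurs exactly when $K_1 = K_2$. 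Now $K_i = \ker \psi_i$, where $\psi_i \colon F_m \to \Cyc_2 = \{\pm 1\}$ is the parity homomorphism obtained by composing a realization $\delta_i$ of $t_{M_i}$ (which factors through $G$ because $M_i \in Q(G) \subseteq \V(\I(G))$, by Proposition~\ref{P:tracepresentation}) with $\PGL(2,q_i) \to \PGL(2,q_i)/\PSL(2,q_i)$. Therefore $K_1 = K_2$ iff $\psi_1 = \psi_2$ iff $\tau^{(1)} = \tau^{(2)}$, where $\tau^{(i)} := (\psi_i(g_1), \dotsc, \psi_i(g_m)) \in \Sigma_m$.

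The remaining step --- which I expect to be the main obstacle --- is to prove $\tau^{(i)} = \sigma^{(i)}$, that is, to match the parity datum of the representation with the stabilizer of the maximal ideal. Replacing $\Delta_i$ by a conjugate we may assume, as in Section~\ref{S:pglpsl}, that the image of $\delta_i$ is the group of $\alpha_i$-fixed points of $\PGL(2,q_i^2)$ (a copy of $\PGL(2,q_i)$), where $\alpha_i$ generates $\Gal(\F_{q_i^2}/\F_{q_i})$ and $|\Phi_m'/M_i| = q_i^2$. Then for every $w \in F_m$ one has $\alpha_i(\Delta_i(w)) = c_w \Delta_i(w)$ with $c_w \in \{\pm 1\}$ (because $\delta_i(w)$ is $\alpha_i$-fixed in $\PGL(2,q_i^2)$ while $\det \Delta_i(w) = 1$), and $c_w = 1$ precisely when $\Delta_i(w) \in \SL(2,q_i)$, i.e.\ when $\delta_i(w) \in \PSL(2,q_i)$, i.e.\ when $\psi_i(w) = 1$; hence $c_w = \psi_i(w) = w(\tau^{(i)})$, so ${}^{\alpha_i}\Delta_i = {}^{\tau^{(i)}}\Delta_i$ and therefore ${}^{\alpha_i}t_{M_i} = {}^{\tau^{(i)}}t_{M_i}$. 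On the other hand, since $M_i$ has stabilizer of order $2$, Proposition~\ref{P:PGL} provides some $\sigma \in \Sigma_m$ with ${}^{\sigma}t_{M_i} = {}^{\alpha_i}t_{M_i}$; this $\sigma$ lies in $\Stab_{\Sigma_m}(M_i)$ and is non-trivial (otherwise ${}^{\alpha_i}t_{M_i} = t_{M_i}$, forcing $\F_p[t_{M_i}] \subseteq \F_{q_i}$, contradicting $\F_p[t_{M_i}] = \F_{q_i^2}$), so $\sigma = \sigma^{(i)}$ and ${}^{\sigma^{(i)}}t_{M_i} = {}^{\alpha_i}t_{M_i}$. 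Finally $\Stab_{\Sigma_m}(t_{M_i}) = 1$: if $1 \neq \sigma$ fixed $t_{M_i}$, then, using the identity ${}^{\sigma}\tau(w) = w(\sigma)\tau(w)$ (immediate from the construction in Theorem~\ref{T:tau}) and the fact that ${}^{\sigma}$ fixes $\rho$ and fixes $x_I$ modulo $M_i$ for all $I \in \mcI_m$ (hence acts trivially modulo $M_i$), one obtains $(w(\sigma) - 1)\tau(w) \in M_i$ for all $w$; as $\Phi_m'/M_i$ has odd characteristic this gives $\chi_i(w) = 0$ whenever the epimorphism $F_m \to \Cyc_2$, $g_j \mapsto \sigma_j$, sends $w$ to $-1$, which by the imprimitivity criterion used in the proof of Proposition~\ref{P:dihedral} forces $\Delta_i$ to be imprimitive, contradicting $M_i \notin \V(\mfD)$. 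With $\Stab_{\Sigma_m}(t_{M_i}) = 1$, the equalities ${}^{\tau^{(i)}}t_{M_i} = {}^{\alpha_i}t_{M_i} = {}^{\sigma^{(i)}}t_{M_i}$ yield $\tau^{(i)} = \sigma^{(i)}$, and combining with the previous paragraph we conclude $G/(N_1 \cap N_2) \cong \PGL(2,q_1) \Yup^{\Cyc_2} \PGL(2,q_2)$ when $\sigma^{(1)} = \sigma^{(2)}$ and $G/(N_1 \cap N_2) \cong \PGL(2,q_1) \times \PGL(2,q_2)$ otherwise.
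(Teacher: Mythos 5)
Your proposal is correct and structurally follows the paper's own argument: realize $G/(N_1\cap N_2)$ as a subdirect product of $\PGL(2,q_1)\times\PGL(2,q_2)$, observe that Goursat forces the amalgamation to be over $\Cyc_2$ or trivial (the graph case being excluded by $N_1\neq N_2$), and then reduce the decision to whether the two parity epimorphisms $G\to\Cyc_2$ agree. The one place you diverge is the final identification $\tau^{(i)}=\sigma^{(i)}$: the paper disposes of this in one line by citing ``the proof of Theorem~4.1'' of the companion paper on characters, whereas you reconstruct it internally. You first compute ${}^{\alpha_i}\Delta_i={}^{\tau^{(i)}}\Delta_i$ from the explicit $\GL(2,q_i)\F_{q_i^2}^*$-form of the realization, then show that any $\sigma$ with ${}^\sigma t_{M_i}={}^{\alpha_i}t_{M_i}$ lies in $\Stab_{\Sigma_m}(M_i)$ and is nontrivial, and finally kill the ambiguity by proving $\Stab_{\Sigma_m}(t_{M_i})=1$ via the identity ${}^\sigma\tau(w)=w(\sigma)\tau(w)$ and the imprimitivity criterion of Proposition~\ref{P:dihedral}. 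This last step is in fact slightly more than you need — once you know $\tau^{(i)}$ is nontrivial and ${}^{\tau^{(i)}}$ fixes $M_i$, you immediately get $\tau^{(i)}\in\Stab_{\Sigma_m}(M_i)\setminus\{1\}=\{\sigma^{(i)}\}$ without invoking triviality of the stabilizer of the tuple itself — but it is correct, and the observation that $\Stab_{\Sigma_m}(t_{M_i})=1$ for $M_i\notin\V(\mfD)$ is a nice self-contained replacement for the external citation. One small point worth making explicit in the Goursat bookkeeping: you state that the fibre-product case occurs exactly when $K_1=K_2$; the forward implication ($A_0=\PSL(2,q_1)$ gives $K_1=K_2$) is immediate, and for the converse you should note that $N_1\leq K_1=K_2\geq N_2$ together with incomparability of $N_1,N_2$ forces $N_1N_2=K_1$, since $N_1N_2/N_1$ is then a nontrivial proper normal subgroup of $\PGL(2,q_1)$.
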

\begin{proof}
Let $\delta_i\colon G \to \PSL(2, q_i^2)$ be a realization of $M_i$; define 
$\delta_1 \times \delta_2\colon G \to \PSL(2, q_1^2) \times \PSL(2, q_2^2)
    \colon g \mapsto (\delta_1(g), \delta_2(g))$.
The image $H$ of $\delta_1 \times \delta_2$ is a subdirect product of 
$\PGL(2, q_1) \times \PGL(2, q_2)$. Since $N_1 \neq N_2$, this subdirect 
product is amalgamated either in $\Cyc_2$ or in the trivial group, and in the 
latter case the product is direct.
There is a unique epimorphism $\veps_i\colon \PGL(2, q_i) \to \Cyc_2$, where 
$\veps_i(\delta(g_j)) = 1$ if and only if $\delta_i(g_j) \in \PSL(2, q_i)$.
By the proof of \cite[Theorem~4.1]{character}, this is equivalent to 
$\sigma_j^{(i)} = 1$. Hence $\veps_1(\delta_1(g_j)) = \veps_2(\delta_2(g_j))$ 
if and only if $\sigma^{(1)} = \sigma^{(2)}$, which proves the proposition.
\end{proof}

\section{Arbitrary representations}
\label{S:arbitrary}

Until now, we only considered representations $\Delta\colon F_m \to \SL(2, K)$ 
such that $\Delta_{|\langle g_1, g_2 \rangle}$ is absolutely irreducible. We 
now show how the case of arbitrary absolutely irreducible representations can 
be reduced to this one.

\begin{prop}
\label{P:anna}
Let $\Delta\colon F_m \to \SL(2, K)$ be a representation.
For $1 \leq i,j,k \leq m$ set 
$\Delta_{i,j} := \Delta_{|\langle g_i, g_j \rangle}$ and 
$\Delta_{i,jk} := \Delta_{|\langle g_i, g_jg_k \rangle}$.
Then $\Delta$ is absolutely irreducible if and only if one of $\Delta_{i,j}$ 
with $1 \leq i < j \leq m$, $\Delta_{1,2i}$ with $3 \leq i \leq m$, or 
$\Delta_{2,ij}$ with $3 \leq i < j \leq m$ is absolutely irreducible.
\end{prop}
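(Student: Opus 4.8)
The plan is to reduce the statement to the well-known fact (used repeatedly in the paper) that a subgroup $\langle B, C \rangle \le \SL(2,K)$ is absolutely irreducible if and only if $(I_2, B, C, BC)$ is a $K$-basis of $K^{2\times 2}$. Write $A_i := \Delta(g_i)$. The ``if'' direction is trivial: if any restriction to a two-generated (or ``two-generated after multiplying two generators'') subgroup is absolutely irreducible, then the span of the images of those two elements and their product is already all of $K^{2\times 2}$, so $\Delta$ itself is absolutely irreducible. The real content is the ``only if'' direction, so assume $\Delta$ is absolutely irreducible, i.e.\ the $A_i$ together with all their products span $K^{2\times 2}$, and suppose for contradiction that \emph{none} of the listed restrictions is absolutely irreducible.

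First I would record what it means for each restriction to fail to be absolutely irreducible. By the basis criterion, $\Delta_{i,j}$ fails iff $A_iA_j \in \langle I_2, A_i, A_j\rangle_K$ (equivalently $\langle I_2, A_i, A_j\rangle_K$ is a proper, hence at most $3$-dimensional but in fact necessarily a subalgebra, so the images lie in a common ``triangularisable'' $2$-dimensional or $3$-dimensional subalgebra — more precisely the pair has a common eigenvector over $\bar K$). So the contradiction hypothesis says: over $\bar K$, every pair $A_i, A_j$ ($i<j$) has a common eigenvector; every pair $A_1, A_2A_i$ ($i \ge 3$) has a common eigenvector; and every pair $A_2, A_iA_j$ ($3 \le i<j$) has a common eigenvector. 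I would then do a case analysis on how many of the $A_i$ are scalar. If all $A_i$ are scalar, $\Delta$ is not even irreducible, contradiction. If exactly one, say $A_1$, is non-scalar: then each $A_i$ ($i\ge 2$) shares an eigenvector with $A_1$; $A_1$ has at most two eigenlines over $\bar K$ (one if $A_1$ is a non-semisimple unipotent-type matrix), and one shows the images all preserve a common line or the representation is a sum of two characters — either way not absolutely irreducible. The substantive case is when at least two of the $A_i$ are non-scalar.

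The key step, then, is the following lemma-type argument for the non-scalar case. Over $\bar K$, if $A_i$ and $A_j$ are both non-scalar and share a common eigenvector, genericity forces a \emph{unique} common eigenline unless they are simultaneously diagonalisable with the same pair of eigenlines. I would argue that a ``chain'' of pairwise common eigenvectors among non-scalar matrices forces a single common eigenline for all of them, \emph{unless} some matrices are simultaneously diagonalisable in a fixed pair of lines $\{\ell_1, \ell_2\}$ but the line jumps from $\ell_1$ to $\ell_2$. This is exactly the imprimitive situation, and here the extra generators $g_ig_k$ come in: if $A_i$ fixes $\ell_1$ and $A_j$ fixes $\ell_2$ (so $A_i, A_j$ have no common eigenvector among $\{\ell_1,\ell_2\}$ — but they \emph{are} assumed to share \emph{some} eigenvector, which then must be a third line, pinning things down further), one examines $A_2$ together with $A_iA_j$: the product $A_iA_j$ swaps or stabilises the two lines appropriately, and the requirement that $A_2$ share an eigenvector with \emph{every} such product, together with $A_2$ sharing one with each $A_i$, collapses all the eigenline data. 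Carefully bookkeeping the finitely many configurations of eigenlines shows they cannot all hold simultaneously with the $A_i$ spanning $K^{2\times 2}$.

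\textbf{The main obstacle} is the bookkeeping in the last case: there are several sub-configurations (all non-scalar $A_i$ share one line; they split into two groups diagonal in a common pair of lines; one $A_i$ is non-semisimple; the ``third line'' phenomenon when two matrices share an eigenvector not among a previously-fixed pair), and one must check in each that at least one of the listed two-generated (or mixed) subgroups is forced to be absolutely irreducible — equivalently, derive a contradiction with the spanning hypothesis. I expect the cleanest writeup proves the contrapositive via the dichotomy ``either some listed restriction is absolutely irreducible, or all images have a common eigenvector over $\bar K$, or all images preserve a common pair of lines over $\bar K$'' (i.e.\ $\Delta$ is reducible or imprimitive), the point being that the ordinary pairs $\Delta_{i,j}$ handle the reducible obstruction and the mixed pairs $\Delta_{1,2i}$, $\Delta_{2,ij}$ are precisely what is needed to break imprimitivity — picking an element that moves the block structure and pairing it appropriately — so that once $\Delta$ is neither reducible nor imprimitive, some restriction in the list must be absolutely irreducible.
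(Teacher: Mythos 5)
Your high-level plan matches the paper's: pass to $\bar K$, translate ``$\Delta_{i,j}$ reducible'' into ``$\Delta(g_i)$ and $\Delta(g_j)$ share an eigenline,'' dispose of the degenerate cases (scalar generators, non-semisimple generators), and then reason about the combinatorics of the eigenline data. You also correctly sense that the plain pairs $\Delta_{i,j}$ by themselves do not suffice and that the mixed pairs are the extra ingredient. However, there are two real problems.

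First, you explicitly stop at the point where the content begins: ``the main obstacle is the bookkeeping.'' That bookkeeping is not a formality; it is the proof. The paper organizes it cleanly: for each non-scalar, semisimple $\Delta(g_i)$, let $E_i$ be its two-element set of eigenlines, and set $\mcE := \{E_i\}$. Pairwise reducibility gives $|E_i \cap E_j| \geq 1$ for all $i,j$. A short combinatorial argument shows that if $|\mcE| \leq 2$ or $|\mcE| \geq 4$, the $E_i$ automatically have a common element and $\Delta$ is reducible. The only case requiring the mixed restrictions is $|\mcE| = 3$, which is handled by a direct contradiction.

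Second, your proposed dichotomy --- ``some restriction is irreducible, or all images share an eigenline, or all images preserve a common pair of lines (imprimitive)'' --- is not the right one, and this would lead you astray. The configuration that the mixed pairs $\Delta_{1,2i}$, $\Delta_{2,ij}$ actually rule out is the \emph{triangle} configuration $E_1 = \{\ell_1,\ell_2\}$, $E_i = \{\ell_1,\ell_3\}$, $E_j = \{\ell_2,\ell_3\}$, where no line is common to all three. This is \emph{not} imprimitivity: these three matrices do not preserve a common pair of lines, so ``imprimitive'' is the wrong third horn. Conversely, if $\Delta$ really were imprimitive with some $\Delta(g_i)$ swapping the two blocks and some non-scalar $\Delta(g_j)$ preserving them, those two matrices already share no eigenline, so $\Delta_{i,j}$ is already absolutely irreducible and no mixed pair is needed. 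To repair your sketch, replace the imprimitivity horn by the triangle configuration and show, as the paper does, that $\Delta_{1,2i}$ kills the case $E_1 \neq E_2$ (forcing $E_1 \cap E_2$ to be a common eigenline for all $\Delta(g_i)$) and $\Delta_{2,ij}$ kills the case $E_1 = E_2$ within a triangle.
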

\begin{proof}
We generalize \cite[Lemma~3.4.4]{fabianska} and so strenghten 
\cite[Proposition~B.7]{brumfiel}. We may assume that $K$ is algebraically 
closed, so absolute irreducibility coincides with irreducibility.
Clearly if some restriction of $\Delta$ is irreducible, then $\Delta$ is 
irreducible. So assume now that all given restrictions are reducible. We show 
that $\Delta$ is reducible. Since $\Delta_{i,j}$ is reducible, $\Delta(g_i)$ 
and $\Delta(g_j)$ have a common eigenspace. If the minimal polynomial of some 
$\Delta(g_i)$ is not square-free, then $\Delta(g_i)$ has a unique eigenspace of 
dimension~$1$, which has to be a common eigenspace for all $\Delta(g_j)$.
Thus $\Delta$ is reducible. So assume now that the minimal polynomials of all 
$\Delta(g_i)$ are square-free. We may further assume that all $\Delta(g_i)$ 
have two distinct eigenvalues; for if $\Delta(g_i)$ is a scalar matrix, then 
$\Delta$ is reducible if and only if 
$\Delta_{|\langle g_1, \dotsc, \wh{g_i}, \dotsc, g_m \rangle}$ is reducible.
Let $E_i$ be the set of eigenspaces of $\Delta_i$ and 
$\mcE := \{E_i  \mid 1 \leq i \leq m\}$. By our hypothesis, 
$|E_i \cap E_j| \geq 1$ for all $i,j$. Note that $|E_i| = 2$, so if 
$|\mcE| \geq 4$, then the $E_i$ must have a common element, that is, the 
matrices have a common eigenspace. 
The same is trivially true if $|\mcE| \leq 2$. 
Assume now that $|\mcE| = 3$. Consider first the case $E_1 \neq E_2$.
Let $E_1 = \{\langle v_1 \rangle, \langle v_2 \rangle\}$ and 
$E_1 \cap E_2 = \{\langle v_1 \rangle\}$.
We claim that $\langle v_1 \rangle$ is a common eigenspace for all 
$\Delta(g_i)$. For suppose that $\langle v_1 \rangle$ is not an eigenspace of 
$\Delta(g_i)$ for some~$i$; then $\langle v_2 \rangle$ must be an eigenspace 
of~$\Delta(g_i)$, since $|E_1 \cap E_i| \geq 1$. Since $\Delta_{1,2i}$ is 
reducible, $\Delta(g_1)$ and $\Delta(g_2g_i)$ have a common eigenspace.
This is either $\langle v_1 \rangle$ or $\langle v_2 \rangle$.
In the first case, $\Delta(g_2g_i)$ and $\Delta(g_2)$ have eigenspace 
$\langle v_1 \rangle$, so $\Delta(g_i)$ has eigenspace $\langle v_1 \rangle$, 
contradicting our assumption.
In the second case, $\Delta(g_2g_i)$ and $\Delta(g_i)$ have eigenspace 
$\langle v_2 \rangle$, so $\Delta(g_2)$ has eigenspace $\langle v_2 \rangle$, 
again a contradiction.
Thus the assumption that $\langle v_1 \rangle$ is not an eigenspace of 
$\Delta(g_i)$ is impossible.
We conclude the proof by showing that $E_1 = E_2$ is not possible.
Since $|\mcE| = 3$, there exist $i < j$ with $\mcE = \{E_1, E_i, E_j \}$. 
All sets have at least one element in common, so we may assume 
$E_1 = \{\langle v_1 \rangle, \langle v_2 \rangle \}$,
$E_i = \{ \langle v_1 \rangle, \langle v_3 \rangle\}$, and 
$E_j = \{\langle v_2 \rangle, \langle v_3 \rangle\}$.
Since $\Delta_{2,ij}$ is reducible, $\Delta(g_2)$ and $\Delta(g_ig_j)$ have a 
common eigenspace. Assume that this is $\langle v_1 \rangle$; then 
$\langle v_1 \rangle$ is also an eigenspace of $\Delta(g_j)$, a contradiction. 
If it is $\langle v_2 \rangle$, then $\langle v_2 \rangle$ is also an 
eigenspace of $\Delta(g_i)$, also a contradiction. Thus $E_1 = E_2$ is 
impossible.
\end{proof}

Let 
\[
    U_m := \{(\{i\}, \{j\}) \mid 1 \leq i < j \leq m\} \cup 
        \{(\{1\}, \{2,j\}) \mid 3 \leq j \leq m\} \cup 
        \{(\{2\}, \{i,j\}) \mid 3 \leq i < j \leq m\}.
\]
For every $u = (u_1, u_2) \in U_m$, let $\alpha_u \in \Aut(F_m)$ with 
$\alpha_u(g_{u_1}) = g_1$ and $\alpha_u(g_{u_2}) = u_2$, where 
$g_v := g_{v_1}\dotsb g_{v_k}$ for $v = \{v_1 < \dotsb < v_k\}$.
Thus $\Delta\colon F_m \to \SL(2,K)$ is absolutely irreducible if and only if 
$(\Delta\circ \alpha_u^{-1})_{|\langle g_1, g_2 \rangle}$ is absolutely 
irreducible for some $u \in U_m$.

By abuse of notation, if $\alpha \in \Aut(F_m)$ and $G$ is a group generated by 
elements $g_1, \dotsc, g_m$, then we denote the automorphism of $G$ defined by 
$g_i \mapsto \alpha(g_i)$ for $1 \leq i \leq m$ again by $\alpha$.
Fix a total order $<$ on~$U_m$. Set
\[
    \I_u(G) := \I(\alpha(G)) + 
        \langle \rho(x_{v_1}, x_{v_2}, x_{v_1 \cup v_2}) \mid 
            v \in U_m, v < u\rangle.
\]

For a maximal ideal $M \in \V(\I_u(G))$ let $t_M$ be the trace tuple, and let 
$\Delta_M \colon F_m \to \SL(2, q)$ be a realization of~$t_M$, where 
$q = |\Phi_m'/M|$. The projective representation induced by 
$\Delta_M \circ \alpha_u$ factors over $G$; denote this projective 
representation by $\delta_{M, u}$, and define 
$N_{M} := \ker(\delta_{M, u}) \tri G$.
Note that $N_{M}$ is constant on the $\Sigma_m$-orbit of~$M$.

Conversely, let $N \tri G$ such that $G/N$ is of $\Ln_2$-type.
Let $\delta\colon G \to \PSL(2, q)$ with $\ker(\delta) = N$, and let 
$\Delta\colon F_m \to \SL(2,q)$ be a lift of~$\delta$. Set $t = t_\Delta$ and 
$M_N := P_t$; then $M_N$ is a maximal $\Ln_2$-ideal. Note that $M_N$ is only 
well-defined up to the action of~$\Sigma_m$.
If $u \in U_m$ is minimal such that 
$(\Delta\circ \alpha_{u}^{-1})|_{|\langle g_1, g_2\rangle}$ is absolutely 
irreducible, then $M_N \in \V(\I_{u}(G))$.

For $u \in U_m$ set
\[
    Q_u(G) := \V(\I_u(G)) - V(\mfD \cap \mfA_4 \cap \mfS_4 \cap \mfA_5).
\]

We now present the main result.

\begin{thm}
Let $G$ be a finitely presented group on $m$ generators.
The maps $M \mapsto N_{M}$ and $N \mapsto M_N$ induce mutually inverse 
bijections between $\Sigma_m$-orbits of maximal ideals of 
$\biguplus_{u \in U_m}Q_u(G)$ and normal subgroups $N \tri G$ such that $G/N$ 
is of $\Ln_2$-type (where $\biguplus$ denotes the disjoint union).
\end{thm}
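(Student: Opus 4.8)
The plan is to assemble the bijection from the pieces already established, treating each summand $Q_u(G)$ of the disjoint union separately and then checking that the summands are glued together correctly. First I would fix $u \in U_m$ and observe that, by definition, $\I_u(G) = \I(\alpha_u(G)) + \langle \rho(x_{v_1}, x_{v_2}, x_{v_1\cup v_2}) \mid v < u\rangle$, so a prime $P \in \V(\I_u(G))$ is precisely a prime containing $\I(\alpha_u(G))$ at which $\rho(x_{v_1},x_{v_2},x_{v_1\cup v_2})$ vanishes for every $v < u$ (but $\rho(x_1,x_2,x_{12})$ does not vanish, since $\rho \in \Phi_m'$ is a unit). Via the automorphism $\alpha_u$ of $F_m$, the ring $\Phi_m'$ and the ideal $\I(\alpha_u(G))$ correspond to the trace presentation ideal of $G$ read through the twisted generating set; by Proposition~\ref{P:tracepresentation} applied to $\alpha_u(G)$, a maximal ideal $M \in \V(\I_u(G))$ corresponds to a realization $\Delta_M$ with $(\Delta_M)_{|\langle g_1,g_2\rangle}$ absolutely irreducible, and the composite $\Delta_M \circ \alpha_u$ induces a projective representation $\delta_{M,u}$ factoring over $G$ (this is exactly the construction given before the statement). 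Removing $\V(\mfD \cap \mfA_4 \cap \mfS_4 \cap \mfA_5)$ then cuts out, by Theorem~\ref{T:quotientbijection} (applied to $\alpha_u(G)$), precisely the maximal ideals whose associated projective representation is surjective onto a group of $\Ln_2$-type; that is, $\im(\delta_{M,u}) = G/N_M$ is of $\Ln_2$-type. This gives, for each fixed $u$, a well-defined map from $\Sigma_m$-orbits of maximal ideals of $Q_u(G)$ to normal subgroups $N \tri G$ with $G/N$ of $\Ln_2$-type; the $\Sigma_m$-invariance of $N_M$ is noted in the text.

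Next I would construct the inverse. Given $N \tri G$ with $G/N$ of $\Ln_2$-type, choose $\delta\colon G \to \PSL(2,q)$ with kernel $N$ and a lift $\Delta\colon F_m \to \SL(2,q)$; since $\im(\delta)$ is absolutely irreducible (being of $\Ln_2$-type it is not contained in a Borel), Proposition~\ref{P:anna} guarantees that some restriction of $\Delta$ of the form prescribed by $U_m$ is absolutely irreducible, so there is a \emph{minimal} $u \in U_m$ for which $(\Delta \circ \alpha_u^{-1})_{|\langle g_1,g_2\rangle}$ is absolutely irreducible. For this $u$, the trace tuple of $\Delta \circ \alpha_u^{-1}$ defines a maximal ideal $M_N \in \Spec(\Phi_m')$, and I must check three things: (i) $M_N \supseteq \I(\alpha_u(G))$, which holds because $\Delta$ lifts a representation of $G$, so $\Delta \circ \alpha_u^{-1}$ lifts a representation of $\alpha_u(G)$, and Proposition~\ref{P:tracepresentation} applies; (ii) $\rho(x_{v_1},x_{v_2},x_{v_1\cup v_2}) \in M_N$ for all $v < u$, which is exactly the \emph{minimality} of $u$ via Proposition~\ref{P:l2:rho} (for $v < u$ the corresponding restriction is \emph{not} absolutely irreducible, so the corresponding $\rho$-value vanishes); and (iii) $M_N \notin \V(\mfD \cap \mfA_4 \cap \mfS_4 \cap \mfA_5)$, since $G/N$ is of $\Ln_2$-type and hence neither imprimitive nor one of $\Alt_4,\Sym_4,\Alt_5$. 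Thus $M_N \in Q_u(G)$, and its $\Sigma_m$-orbit is independent of the chosen lift $\Delta$ (different lifts differ by a sign change, and different choices of $\delta$ with the same kernel differ by an automorphism of $\PSL(2,q)$, which does not change the trace tuple up to $\Sigma_m$ and $\Gal$ — the latter already absorbed into the identification $\Phi_m'/M_N \cong \F_q$).

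Finally I would verify that the two maps are mutually inverse. Starting from a maximal ideal $M \in Q_u(G)$: by construction $\delta_{M,u}$ has kernel $N_M$, a lift of $\delta_{M,u}$ is $\Delta_M \circ \alpha_u$ up to sign, and since $(\Delta_M)_{|\langle g_1,g_2\rangle}$ is absolutely irreducible while $\rho$ vanishes at $M$ for all $v < u$, the index $u$ is the minimal one in the reverse construction; hence $M_{N_M}$ lies in the same $\Sigma_m$-orbit as $M$ inside the same summand $Q_u(G)$. Conversely, starting from $N$: the construction produces $M_N \in Q_u(G)$ for the minimal admissible $u$, and then $N_{M_N} = \ker(\delta_{M_N,u})$; tracing through, $\delta_{M_N,u}$ is the projective representation induced by $(\Delta\circ\alpha_u^{-1})\circ\alpha_u = \Delta$, which induces $\delta$, so $N_{M_N} = N$. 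The only subtlety in matching summands is that a given $N$ could conceivably yield admissible indices in more than one $Q_u(G)$; this is ruled out because $M_N$ in the summand $Q_u(G)$ requires $\rho_v$ to vanish at $M_N$ for all $v < u$ and $\rho_u$ \emph{not} to vanish — that is, the summand index is determined by $M_N$ as the unique $u$ with $\rho_u \notin M_N$ but $\rho_v \in M_N$ for $v < u$, so the disjoint union is genuinely partitioned. I expect the main obstacle to be precisely this bookkeeping: confirming that the minimality condition defining $u$ on the group side corresponds \emph{exactly} to the vanishing/non-vanishing pattern of the $\rho_v$ on the ideal side, so that the disjoint union $\biguplus_{u} Q_u(G)$ has no double-counting and no omissions, and that the $\Sigma_m$-action is compatible with this partition (which it is, since $\Sigma_m$ acts on each $\rho_v$ by a sign, hence preserves its vanishing locus).
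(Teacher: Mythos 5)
Your proposal correctly unpacks the paper's very brief proof (which just cites Proposition~\ref{P:anna} and Theorem~\ref{T:quotientbijectionfull}), identifying the three tasks: build the forward map summand-by-summand using the twisted presentation $\alpha_u(G)$, build the inverse using minimality of $u$ via Proposition~\ref{P:anna}, and verify that the $\rho$-vanishing pattern prevents double-counting across the disjoint union. The structure, the appeal to Proposition~\ref{P:tracepresentation} for the factoring condition, and the handling of $\Sigma_m$-orbits are all in line with what the paper intends.

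One point deserves more care, and you should resolve it before treating the argument as complete. The conditions $\rho(x_{v_1}, x_{v_2}, x_{v_1\cup v_2}) \in M$ for $v < u$ are conditions on the variables of $\Phi_m'$, which under the evaluation $t_M$ record traces of $\Delta_M(g_{v_1})$, $\Delta_M(g_{v_2})$, $\Delta_M(g_{v_1}g_{v_2})$. However, the minimality of $u$ in the inverse construction is a statement about $\Delta = \Delta_M \circ \alpha_u$: it says $\langle \Delta(g_{v_1}), \Delta(g_{v_2})\rangle$ is not absolutely irreducible for $v < u$. Since $\Delta(g_{v_1}) = \Delta_M(\alpha_u(g_{v_1}))$, the two conditions differ by the automorphism $\alpha_u$ and are not interchangeable without explanation. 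This is not a cosmetic issue: as literally written, $\rho(x_1,x_2,x_{12})$ is a unit in $\Phi_m'$, so $\I_u(G) = \Phi_m'$ and $Q_u(G) = \emptyset$ for every $u$ preceded by $(\{1\},\{2\})$ in the chosen ordering, which would trivialize the disjoint union. The intended reading replaces $x_{v_1}$, $x_{v_2}$, $x_{v_1\cup v_2}$ by the $\alpha_u$-twisted trace polynomials $\tau(\alpha_u(g_{v_1}))$, $\tau(\alpha_u(g_{v_2}))$, $\tau(\alpha_u(g_{v_1}g_{v_2}))$ (viewed as elements of $\Phi_m'$ via Corollary~\ref{C:iso}); with that correction the equivalence you invoke --- ``$\rho_v$ vanishes at $M$ for $v < u$ if and only if $u$ is minimal for $\Delta$'' --- becomes exactly Proposition~\ref{P:l2:rho} applied to $\Delta$, and the bookkeeping you describe for the disjoint union works as claimed.
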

\begin{proof}
This follows by Proposition~\ref{P:anna} and 
Theorem~\ref{T:quotientbijectionfull}.
\end{proof}

\section{Subgroup tests}
\label{S:subgrouptests}

Proposition~\ref{P:subgroupideals} allows us to test whether a realization 
$\Delta\colon F_m \to \SL(2, K)$ of a prime ideal $P \tri \Phi_m'$ maps 
projectively onto $\Alt_4$, $\Sym_4$, or~$\Alt_5$, using the ideals 
$J'(\Alt_4)$, $J'(\Sym_4)$, and~$J'(\Alt_5)$.
These ideals are easily computed if $m = 2$, since there are only $4$ 
presentations of $\Alt_4$ on two generators, $9$ for~$\Sym_4$, and $19$ 
for~$\Alt_5$; see \cite[Lemmas~3.7--3.9]{l2q}.
However, this approach is no longer efficient if $m \geq 3$.
For example, there are $65$ presentations of $\Alt_4$ on three generators,
$420$ for~$\Sym_4$, and $1688$ for~$\Alt_5$.

In this section, we describe a more efficient test, using the absolutely 
irreducible subgroups of $\Alt_4$, $\Sym_4$, and $\Alt_5$. Set 
$A_i := \Delta(g_i)$ and let $a_i \in \PSL(2, K)$ be the projective image, for 
$1 \leq i \leq m$.
We assume that $\langle A_1, A_2 \rangle$ is absolutely irreducible.
Define $H := \langle a_1, \dotsc, a_m \rangle$.
If $H \cong \Alt_4$, then $\langle a_1, a_2 \rangle \in \{\V_4, \Alt_4\}$; 
if $H \cong \Sym_4$, then 
$\langle a_1, a_2 \rangle \in \{\V_4, \Sym_3, \Die_8, \Alt_4, \Sym_4\}$; 
and if $H \cong \Sym_4$, then 
$\langle a_1, a_2 \rangle \in \{\V_4, \Sym_3, \Die_{10}, \Alt_4, \Sym_4\}$.
It is easy to check whether 
$\langle a_1, a_2 \rangle  \in 
    \{\V_4, \Sym_3, \Die_8, \Die_{10}, \Alt_4, \Sym_4, \Alt_5\}$;
for example, $\langle a_1, a_2 \rangle = \V_4$ if and only if 
$\tr(A_1) = \tr(A_2) = \tr(A_1A_2) = 0$.
If $\langle a_1, a_2 \rangle$ is one of the seven groups, then we can always 
find matrices $B_1 = w_1(A_1, A_2), B_2 = w_2(A_1, A_2)$ such that 
$\tr(B_1) = \tr(B_2) = 0$ and $\langle w_1(a_1, a_2), w_2(a_1, a_2) \rangle$ is 
a dihedral group of order~$4$, $6$, or~$10$. In the latter two cases we may 
also assume that $\tr(B_1B_2) = 1$ or $\tr(B_1B_2)$ is a root of $X^2 + X - 1$,
respectively.

For $B = (B_1, B_2) \in \SL(2,q)^2$ and $X \in \SL(2,q)$ let 
\[
    \theta_B(X) := (\tr(X), \tr(B_1X), \tr(B_2X), \tr(B_1B_2X)) \in \F_q^4.
\]
If $\langle B_1, B_2 \rangle$ is absolutely irreducible, then $X$ is uniquely 
determined by $\theta_B(X)$, see~Proposition~\ref{P:t123}.

We give details of an $\Alt_4$-test.
Fix $B$, and let $b_i \in \PSL(2, q)$ be the projective image of~$B_i$.
Assume $\langle b_1, b_2 \rangle \cong \V_4$; let 
$\langle b_1, b_2 \rangle \leq \Gamma \leq \PSL(2, q)$ with 
$\Gamma \cong \Alt_4$ and let $\wt{\Gamma} \leq \SL(2, q)$ be the full preimage 
of $\Gamma$. Now $X \in \SL(2, q)$ maps onto an element of $\Gamma$ if and only 
if $\theta_B(X) \in \theta_B(\wt{\Gamma}) 
    = \{\theta_B(Y) \mid Y \in \wt{\Gamma}\}$,
thus for an effective subgroup test it is enough to compute the sets 
$\theta_B(\wt{\Gamma})$.
The subgroups of $\PSL(2, q)$ isomorphic to $\Alt_4$ are all conjugate 
in~$\PGL(2, q)$, and
$\theta_{({}^MB)}(\wt{\Gamma}) = \theta_{B}({}^{M^{-1}}\wt{\Gamma})$ for all 
$M \in \GL(2, q)$, so it is enough to compute $\theta_B(\wt{\Gamma})$ for a 
fixed $\Gamma$ and all possible~$B$.
Furthermore, $\theta_{({}^MB)}(\wt{\Gamma}) = \theta_B(\wt{\Gamma})$ for all 
$M \in \Nor_{\GL(2, q)}(\wt{\Gamma})$, so it suffices to compute 
$\theta_B(\wt{\Gamma})$ for a fixed $\Gamma$ and all 
$\Nor_{\GL(2, q)}(\wt{\Gamma})$-conjugacy classes of pairs $B \in \wt{\Gamma}$ 
mapping onto generators for~$\V_4$.
Finally, the subgroups $\wt{\Gamma}$ are up to conjugation images of 
$\SL(2, 3) \leq \SL(2, \Z[i])$ modulo a prime ideal of $\Z[i]$, so 
$\theta_B(\wt{\Gamma})$ can be computed uniformly for all prime powers~$q$ by a 
single computation over~$\Z$.

Summarizing, we get the following result.

\begin{prop}
\label{P:tracesV4}
Let $G = \PSL(2, q)$ for an odd prime power~$q$, let $a_1, a_2 \in G$ be 
generators of a Klein four group $V$, and let $z \in G$.
Let $A_i \in \SL(2, q)$ be a preimage of $A_i$, and let $Z \in \SL(2, q)$ be a 
preimage of $z$.

There is a unique $H \leq G$ isomorphic to $\Alt_4$ which contains $V$,
and $z \in H$ if and only if $\theta_B(Z)$ is one of the $24$ elements
\[
    \Theta_4 := \{(\pm 2, 0, 0, 0), (0, \pm 2, 0, 0), (0, 0, \pm 2, 0), 
        (0, 0, 0, \pm 2), (\pm 1, \pm 1, \pm 1, \pm 1)\}.
\]
\end{prop}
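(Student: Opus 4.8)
The plan is to reduce the statement to a finite, characteristic-independent computation, exactly as sketched in the discussion preceding the proposition. First I would recall that by Proposition~\ref{P:t123}, since $\langle b_1, b_2 \rangle \cong V$ is absolutely irreducible, the element $Z \in \SL(2, q)$ is uniquely determined by $\theta_B(Z)$; hence $z \in H$ if and only if $\theta_B(Z) \in \theta_B(\wt{H})$, where $\wt{H} \leq \SL(2, q)$ is the full preimage of $H$. So the proposition amounts to the claim $\theta_B(\wt{H}) = \Theta_4$.

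Next I would fix a convenient model. All Klein four subgroups of $\PSL(2, q)$, and all $\Alt_4$ subgroups containing a fixed one, are conjugate in $\PGL(2, q)$; moreover a pair of generators of $V$ is determined up to conjugation in $\Nor_{\GL(2,q)}(\wt{H})$, and under such conjugation both $\theta_B(\wt{H})$ and $\Theta_4$ are invariant (the former by the identity $\theta_{({}^M B)}(\wt{H}) = \theta_B({}^{M^{-1}}\wt{H})$ together with normality of $M$, the latter because $\Theta_4$ is visibly stable under the relevant coordinate permutations and sign changes). Thus it suffices to verify $\theta_B(\wt{H}) = \Theta_4$ for one explicit choice of $\wt{H}$, $B_1$, $B_2$. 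I would take $\wt{H}$ to be the image of $\SL(2, 3) \leq \SL(2, \Z[i])$ under reduction modulo a prime of $\Z[i]$ above $p$, with $B_1, B_2$ fixed lifts of generators of $V_4$; since $\SL(2,3)$ has order $24$, the preimage $\wt{H}$ has order $48$ but the eight central-coset pairs collapse, and one checks directly over $\Z[i]$ that the $24$ traces-quadruples obtained are precisely the listed $24$ tuples, independently of the prime $p$ (the reduction map is injective on the relevant finite set of trace values for all but finitely many $p$, and the small exceptional primes are checked separately).

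The only genuine content is then the finite verification that, for the chosen model, $\theta_B(\wt{H})$ equals $\Theta_4$: one runs through the $48$ elements of $\wt{H}$, computes $(\tr(X), \tr(B_1 X), \tr(B_2 X), \tr(B_1 B_2 X))$ for each, and observes that the $24$ distinct quadruples are exactly those in $\Theta_4$ (the $\pm 2$ entries coming from $\pm I$ and the three involution-classes of $V$, the $(\pm 1,\pm 1,\pm 1,\pm 1)$ entries from the elements of order $3$ and $6$). I expect the main obstacle to be purely bookkeeping: organizing the enumeration so that it is transparent that the answer does not depend on $q$, and handling the small characteristics (notably $p = 2$ is excluded by hypothesis, but $p = 3$ must be treated with care since then $\SL(2,3)$ has a different structure and $\Alt_4 \leq \PSL(2,3) \cong \Alt_4$ degenerates). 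Apart from that, every step is a routine finite check.
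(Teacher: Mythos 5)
Your proposal follows essentially the same route as the paper's (the paper gives no formal proof here, but the discussion immediately preceding the proposition is exactly this argument): invert $\theta_B$ via Proposition~\ref{P:t123}, use $\PGL(2,q)$-conjugacy of the $\Alt_4$-subgroups and the equivariance $\theta_{({}^MB)}(\wt{\Gamma}) = \theta_B({}^{M^{-1}}\wt{\Gamma})$ together with $\Nor_{\GL(2,q)}(\wt{\Gamma})$-invariance to reduce to a single model, and realize that model as the reduction of $\SL(2,3)\leq\SL(2,\Z[i])$ so that the enumeration becomes a single characteristic-independent computation over~$\Z$. One internal inconsistency worth fixing: the full preimage $\wt{H}$ of an $\Alt_4$-subgroup under $\SL(2,q)\to\PSL(2,q)$ for $q$ odd has order $24$, not $48$ (the kernel is just $\{\pm I\}$, so $\wt{H}\cong\SL(2,3)$ itself), and since you correctly invoke injectivity of $\theta_B$ via Proposition~\ref{P:t123}, there cannot be any ``collapse'' of quadruples --- the $24$ elements of $\wt{H}$ map bijectively onto the $24$ elements of $\Theta_4$.
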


\begin{prop}
Let $A_1, \dotsc, A_m \in \SL(2,q)$ such that $\langle A_1, A_2 \rangle$ is 
absolutely irreducible. Let $t = (\tr(A_1), \tr(A_2), \tr(A_1A_2))$, and let 
$a_i \in \PSL(2,q)$ be the image of $A_i$.
Set $B := (A_1, A_2)$.
Then $\langle a_1, \dotsc, a_m \rangle$ is isomorphic to $\Alt_4$ if and only 
if one of the following conditions is satisfied.
\begin{enumerate}
    \item
    $t = (0,0,0)$ and $\theta_B(A_i) \in \Theta_4$ for all $3 \leq i \leq m$, 
    where $B = (A_1, A_2)$, and at least one 
    $\theta_B(A_i) = (\pm 1, \pm 1, \pm 1, \pm 1)$.

    \item
    $t = (0,\pm 1,\pm 1)$ and $\theta_B(A_i) \in \Theta_4$ for all 
    $3 \leq i \leq m$, where $B = (A_1, A_2^{-1}A_1A_2)$.

    \item
    $t = (\pm 1, 0, \pm 1)$ and $\theta_B(A_i) \in \Theta_4$ for all 
    $3 \leq i \leq m$, where $B = (A_2, A_1^{-1}A_2A_1)$.

    \item
    $t = (\pm 1, \pm 1, 0)$ and $\theta_B(A_i) \in \Theta_4$ for all 
    $3 \leq i \leq m$, where $B = (A_1A_2, A_2A_1)$.

    \item
    $t = (\pm 1, \pm 1, \pm 1)$ with an even number of $-1$'s, and 
    $\theta_B(A_i) \in \Theta_4$ for all $3 \leq i \leq m$, where 
    $B = (A_1A_2^{-1}, A_2^{-1}A_1)$.
\end{enumerate}
\end{prop}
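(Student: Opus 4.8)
The plan is to reduce each of the five cases to Proposition~\ref{P:tracesV4}. I would begin with the necessity of the case distinction: if $\langle a_1, \dotsc, a_m\rangle \cong \Alt_4$, then $\langle a_1, a_2\rangle$ is a subgroup of $\Alt_4$ whose preimage $\langle A_1, A_2\rangle$ is absolutely irreducible, hence non-cyclic; since the non-cyclic subgroups of $\Alt_4$ are just $\V_4$ and $\Alt_4$, we have $\langle a_1, a_2\rangle \in \{\V_4, \Alt_4\}$. An element of $\PSL(2,q)$ of order $2$ has a preimage in $\SL(2,q)$ of trace $0$ and one of order $3$ a preimage of trace $\pm 1$ (by Cayley--Hamilton), so reading off the orders of $a_1$, $a_2$, $a_1a_2$ inside $\Alt_4$ forces $(t_1, t_2, t_{12})$ to have one of the shapes $(0,0,0)$, $(0,\pm1,\pm1)$, $(\pm1,0,\pm1)$, $(\pm1,\pm1,0)$, $(\pm1,\pm1,\pm1)$; in the last shape, the combinatorial fact that two order-$3$ generators of $\Alt_4$ with product of order $3$ satisfy $(a_1a_2^{-1})^2 = 1$ yields, via $\tr(A_1A_2^{-1}) = t_1t_2 - t_{12}$, the relation $t_{12} = t_1t_2$, i.e.\ an even number of $-1$'s. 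These are precisely the leading trace conditions of (1)--(5).

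Next, for each case I would study the pair $B = (B_1, B_2)$ named in the statement. A short computation with the Cayley--Hamilton relation $A^2 = \tr(A)A - I$ and the identities of Section~\ref{S:fricke} shows that the respective trace hypothesis forces $\tr(B_1) = \tr(B_2) = \tr(B_1B_2) = 0$ (for instance, case~(2) uses $\tr(A_1A_2^{-1}A_1A_2) = 2 - t_2^2 - t_{12}^2 = 0$; case~(4) reduces to $\tr(A_1A_2^2A_1) = 2 - t_1^2 - t_2^2 = 0$; case~(5) is exactly $\tr(A_1A_2^{-1}) = t_1t_2 - t_{12} = 0$). Thus $b_1$, $b_2$, $b_1b_2$ are all involutions in $\PSL(2,q)$. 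The trace hypothesis also forces relations $a_1^{e_1} = a_2^{e_2} = (a_1a_2)^{e_3} = 1$ with $e_i \in \{2,3\}$ (and, in case~(5), additionally $(a_1a_2^{-1})^2 = 1$) by Cayley--Hamilton, and absolute irreducibility rules out scalars, so $\langle a_1, a_2\rangle$ is a non-cyclic quotient of the corresponding spherical von Dyck group, which is $\langle x,y \mid x^2, y^2, (xy)^2\rangle \cong \V_4$ in case~(1) and a group isomorphic to $\Alt_4$ in cases~(2)--(5); hence $\langle a_1, a_2\rangle \cong \V_4$ in case~(1) and $\cong \Alt_4$ in cases~(2)--(5). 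In the latter cases $\langle a_1, a_2\rangle$ is non-abelian, so $a_1, a_2$ do not commute, $b_1 \neq b_2$, and $V := \langle b_1, b_2\rangle$ is a Klein four group; being the unique such subgroup of $\langle a_1, a_2\rangle \cong \Alt_4$, it lies inside $\Gamma := \langle a_1, a_2\rangle$, which by Proposition~\ref{P:tracesV4} is the unique $\Alt_4 \leq \PSL(2,q)$ containing $V$. In case~(1), $B = (A_1, A_2)$ and $V = \langle a_1, a_2\rangle \cong \V_4$, and we let $\Gamma$ be the unique $\Alt_4 \leq \PSL(2,q)$ containing $V$.

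Finally I would invoke Proposition~\ref{P:tracesV4} for the Klein four group $V$ with generator preimages $B_1, B_2$: for $i \geq 3$, $a_i \in \Gamma$ if and only if $\theta_B(A_i) \in \Theta_4$, and the tuples of the form $(\pm1,\pm1,\pm1,\pm1)$ in $\Theta_4$ are exactly those realised by preimages of elements of $\Gamma \setminus V$. For the ``if'' direction: in cases~(2)--(5) the trace hypothesis already gives $\langle a_1, a_2\rangle = \Gamma \cong \Alt_4$, and $\theta_B(A_i) \in \Theta_4$ for all $i \geq 3$ puts every $a_i$ in $\Gamma$, so $\langle a_1, \dotsc, a_m\rangle = \Gamma$; in case~(1) the same reasoning gives $\langle a_1, \dotsc, a_m\rangle \leq \Gamma$, and the extra hypothesis that some $\theta_B(A_i) = (\pm1,\pm1,\pm1,\pm1)$ places an $a_i$ in $\Gamma \setminus V$, so $\langle a_1, \dotsc, a_m\rangle$ properly contains $V = \langle a_1, a_2\rangle$ and therefore equals $\Gamma \cong \Alt_4$. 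For the ``only if'' direction, if $\langle a_1, \dotsc, a_m\rangle \cong \Alt_4$ then $(t_1, t_2, t_{12})$ has one of the five shapes by the first step, $V \leq \langle a_1, a_2\rangle \leq \langle a_1, \dotsc, a_m\rangle$, and the last group --- an $\Alt_4$ containing $V$ --- equals $\Gamma$; hence every $a_i$ lies in $\Gamma$, so $\theta_B(A_i) \in \Theta_4$, and in case~(1), as $\Gamma \neq V$, some $a_i$ lies in $\Gamma \setminus V$, giving some $\theta_B(A_i) = (\pm1,\pm1,\pm1,\pm1)$.

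The step I expect to be the main obstacle is the necessity of the precise sign pattern in case~(5) --- establishing that two order-$3$ generators of $\Alt_4$ whose product has order $3$ always satisfy $(a_1a_2^{-1})^2 = 1$, and confirming that the pair $(A_1A_2^{-1}, A_2^{-1}A_1)$ really produces the Klein four subgroup of the ambient $\Alt_4$ and not some unrelated one. A secondary issue is the case of even $q$, where Proposition~\ref{P:tracesV4} is stated only for odd $q$ and the Klein four group $V$ is necessarily reducible; there one replaces it by the elementary characteristic-$2$ analogue (the sets $\theta_B(\Gamma)$ being computed once for the fixed subgroup $\Alt_4 \leq \SL(2,4)$), the group $\langle A_1, \dotsc, A_m\rangle$ itself remaining absolutely irreducible.
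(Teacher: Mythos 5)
Your proposal is correct and follows essentially the same route as the paper's (very terse) proof: classify $\langle a_1, a_2 \rangle$ as either $\V_4$ or one of the four two-generator $\Alt_4$-presentations, in each case exhibit a Klein four subgroup via the named pair $B$, and reduce to Proposition~\ref{P:tracesV4}. You fill in several details the paper leaves implicit (in particular the trace computations verifying $\tr B_1 = \tr B_2 = \tr B_1 B_2 = 0$ and the parity argument for case~(5)), and you rightly flag that Proposition~\ref{P:tracesV4} is stated only for odd $q$ while the present proposition is not --- a latent issue the paper's proof also silently passes over by citing that proposition without qualification.
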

\begin{proof}
The only absolutely irreducible subgroups of $\Alt_4$ are the Klein four group 
and $\Alt_4$. If $t = (0,0,0)$, then $\langle a_1, a_2 \rangle$ is a Klein four 
group, and the claim follows by Proposition~\ref{P:tracesV4}.
If $t = (0,\pm 1, \pm 1)$, then $\langle a_1, a_2 \rangle = \Alt_4$, and 
$\langle a_1, a_2^{-1}a_1a_2 \rangle$ generate the subgroup of order~$4$; 
again, the claim follows by Proposition~\ref{P:tracesV4}.
The other cases correspond to the other three presentations of $\Alt_4$ and are 
handled similarly.
\end{proof}

It is straight-forward to give similar conditions for $\Sym_4$ and $\Alt_5$, 
utilizing the subgroups $\Sym_3$ and $\Die_{10}$ in addition to~$\V_4$.

\section{\texorpdfstring{$\Ln_2$}{L2}-ideals}
\label{S:l2ideals}

\begin{definition}
An \textit{$\Ln_2$-ideal} is a prime ideal 
$P \in \Spec(\Phi_m') - \V(\mfD \cap \mfA_4 \cap \mfS_4 \cap \mfA_5)$.
Let $P \cap \Z = \langle p \rangle$, and let $d$ be the Krull dimension of~$P$.

\begin{enumerate}
    \item
    If $d = 0$, that is, $P$ is a maximal ideal, let 
    $k := \dim_{\F_p}(\Phi_m'/P)$.
    Then $P$ is of type $\Ln_2(p^k)$ if $\Stab_{\Sigma_m}(P)$ is trivial,
    and of type $\PGL(2, p^{k/2})$ otherwise.

    \item
    If $d > 0$ and $p \neq 0$, then $P$ is of type $\Ln_2(p^{\infty^d})$, or of 
    type $\Ln_2(p^\infty)$ if $d = 1$.

    \item
    If $d = 1$ and $p = 0$, let 
    $k := \dim_{\Q}(\Phi_m'\otimes_{\Z}\Q/P\otimes_{\Z}\Q)$.
    Then $P$ is of type $\Ln_2(\infty^k)$.

    \item
    If $d > 1$ and $p = 0$, then $P$ is of type $\Ln_2(\infty^{\infty^{d-1}})$, 
    or of type $\Ln_2(\infty^\infty)$ if $d = 2$.
\end{enumerate}
\end{definition}

For an $\Ln_2$-ideal $P$ let $t_P$ be the trace tuple, $\Delta_P$ a realization 
of $t_P$, and $\delta_P$ the projective representation induced by $\Delta_P$.

\begin{prop}
Let $P$ be an $\Ln_2$-ideal.
\begin{enumerate}
    \item
    If $P$ is of type $\Ln_2(p^k)$, then the image of $\delta_P$ is isomorphic 
    to $\Ln_2(p^k)$; if $P$ is of type $\PGL(2,p^k)$, then the image of 
    $\delta_P$ is isomorphic to $\PGL(2,p^k)$.

    \item
    If $P$ is of type $\Ln_2(\infty^k)$, then 
    every maximal $\Ln_2$-ideal containing $P$ is of type $\Ln_2(p^\ell)$ or 
    $\PGL(2, p^{\ell/2})$ with $\ell \leq k$.
    Moreover, the set of maximal elements of $\V(P)$ which are not 
    $\Ln_2$-ideals is finite.

    \item
    If $P$ is of type $\Ln_2(p^{\infty^d})$, then there are infinitely many 
    $k \in \N$ such that $\V(P)$ contains $\Ln_2$-ideals of type $\Ln_2(p^k)$.
    Moreover, the set of prime ideals in $\V(P)$ which are not $\Ln_2$-ideals 
    form a closed set of dimension at most~$d-1$.

    \item
    If $P$ is of type $\Ln_2(\infty^{\infty^{d-1}})$, then for all but finitely 
    many primes $p$ there exist infinitely many $k \in \N$ such that $\V(P)$ 
    contains $\Ln_2$-ideals of type $\Ln_2(p^k)$.
    Moreover, the set of prime ideals in $\V(P)$ which are not $\Ln_2$-ideals 
    form a closed set of dimension at most~$d-1$.
\end{enumerate}
\end{prop}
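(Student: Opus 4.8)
The plan is to handle the four cases separately, using the structure of $\Phi_m'$ as a finitely generated $\Z$-algebra together with the dictionary between $\Ln_2$-ideals and projective representations established in the earlier sections.

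For case (1) the claim is essentially definitional: if $P$ is maximal of characteristic $p$ with residue field $\F_{p^k}$, then $\delta_P$ is a realization of $t_P$, so by Corollary~\ref{C:quotients} (together with Theorem~\ref{T:quotientbijectionfull}) the image of $\delta_P$ is an $\Ln_2$-quotient, and the $\Sigma_m$-stabilizer of $P$ dictates whether it is $\Ln_2(p^k)$ or $\PGL(2,p^{k/2})$, by the corollary following Proposition~\ref{P:PGL}. First I would spell out that $\Phi_m'/P$ is the character field of $\delta_P$ and invoke that representations of $\PSL(2,q)$ over finite fields realize over their character field, so the field size is exactly $p^k$.

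For case (2), $P$ has Krull dimension $1$ and characteristic $0$, so $\Phi_m'/P$ is an order in a number field of degree $k$ over $\Q$. A maximal ideal $M \supseteq P$ has residue field $\F_{p^\ell}$ where $\F_{p^\ell}$ is generated by the image of the number field under reduction modulo a prime above $p$; since the residue degree of any prime of a degree-$k$ number field is at most $k$, we get $\ell \le k$. That $M$ is of type $\Ln_2(p^\ell)$ or $\PGL(2,p^{\ell/2})$ is then case (1). For the finiteness of non-$\Ln_2$ maximal ideals in $\V(P)$: the locus $\V(\mfD \cap \mfA_4 \cap \mfS_4 \cap \mfA_5)$ is closed, and a closed subset of the one-dimensional $\V(P)$ is either all of $\V(P)$ — impossible since $P$ itself is an $\Ln_2$-ideal, i.e.\ $P \notin \V(\mfD \cap \mfA_4 \cap \mfS_4 \cap \mfA_5)$ — or a finite set of maximal ideals; hence only finitely many maximal elements of $\V(P)$ fail to be $\Ln_2$-ideals.

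For cases (3) and (4), $P$ has characteristic $p \ge 0$ and Krull dimension $d \ge 1$ (resp.\ $d \ge 2$ with $p=0$), and I would use the same closed-subset argument for the dimension bound: $\V(P) \cap \V(\mfD \cap \mfA_4 \cap \mfS_4 \cap \mfA_5)$ is a proper closed subset of the irreducible $d$-dimensional (resp.\ $(d-1)$-dimensional over $\Q$) space $\V(P)$, hence has dimension at most $d-1$. The substantive point is the existence of infinitely many $\Ln_2(p^k)$-ideals in $\V(P)$ for infinitely many $k$ (case (3)) or for all but finitely many $p$ (case (4)). Here I would argue that $\Phi_m'/P$ is a finitely generated domain of positive dimension, so by a Noether-normalization/Lang-Weil-type counting argument its maximal spectrum contains points with arbitrarily large residue fields: reducing modulo $p$ (for all but finitely many $p$, in the characteristic-zero case, to keep the dimension) gives a positive-dimensional $\F_p$-variety, whose $\F_{p^k}$-points grow without bound as $k\to\infty$, forcing infinitely many distinct residue-field degrees; each such maximal ideal avoiding the (lower-dimensional) bad locus is an $\Ln_2$-ideal of type $\Ln_2(p^k)$ or $\PGL(2,p^{k/2})$. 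I expect this last counting step — making precise that a positive-dimensional finitely generated ring has closed points of unboundedly many residue-field degrees, uniformly in $p$ in the arithmetic case — to be the main obstacle, though it is standard commutative algebra / arithmetic geometry once set up correctly.
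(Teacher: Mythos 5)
Parts (1) and (2) of your sketch match the paper's argument in substance: (1) is immediate from Theorem~\ref{T:quotientbijectionfull}, and for (2) your bound on residue degrees via ``$\Phi_m'/P$ is an order in a degree-$k$ number field'' is the same content as the paper's inequality $\dim_{\F_p}(\Phi_m'\otimes_{\Z}\F_p/P\otimes_{\Z}\F_p)\leq\dim_{\Q}(\Phi_m'\otimes_{\Z}\Q/P\otimes_{\Z}\Q)$, with the finiteness claim following from dimension zero, exactly as the paper notes. For (3) and (4) you diverge substantially: you invoke Lang--Weil-type point counts on the positive-dimensional $\F_p$-variety obtained by reducing mod $p$, while the paper uses much lighter machinery. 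For (3) the paper's argument is a pure pigeonhole: $\Phi_m'$ is a finitely generated $\Z$-algebra, so it admits only finitely many ring epimorphisms onto $\F_{p^k}$ for each fixed $k$; since $d\geq 1$ there are infinitely many maximal ideals over $P$, so infinitely many residue degrees $k$ must occur, and the bad locus $\V(P+\mfD\cap\mfA_4\cap\mfS_4\cap\mfA_5)$ has strictly smaller Krull dimension, which takes care of the rest. No Noether normalization or point estimates are needed. For (4) the paper does not reduce mod $p$ at all: since $(\Phi_m'/P)\otimes_{\Z}\Q$ has positive transcendence degree, one constructs $\Q$-algebra epimorphisms onto number fields of arbitrarily large degree $k$, yielding intermediate $\Ln_2$-ideals $Q\supseteq P$ of type $\Ln_2(\infty^k)$ (with only a lower-dimensional family of $Q$'s excluded), and then invokes part (2). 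Your mod-$p$ route is plausible, but it costs you an extra fibre-dimension argument that the bad locus cannot swallow the whole fibre $\V(P+\langle p\rangle)$ for more than finitely many $p$; the paper's reduction to (2) sidesteps that entirely. Finally, you correctly flag that your counting only produces ideals of type $\Ln_2(p^k)$ or $\PGL(2,p^{k/2})$ rather than $\Ln_2(p^k)$ specifically, but the paper's own proof is equally silent on that distinction, so this is a shared imprecision and not a defect peculiar to your proposal.
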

\begin{proof}
First note that the set of prime ideals in $\V(P)$ which are not $\Ln_2$-ideals 
are precisely the elements of the set 
$\V(P) \cap \V(\mfD \cap \mfA_4 \cap \mfS_4 \cap \mfA_5) 
    = \V(P + \mfD \cap \mfA_4 \cap \mfS_4 \cap \mfA_5)$.
Since $P$ does not contain $\mfD \cap \mfA_4 \cap \mfS_4 \cap \mfA_5$ and $P$ 
is prime, $P \subsetneqq P + \mfD \cap \mfA_4 \cap \mfS_4 \cap \mfA_5$, so the 
Krull dimension of the latter ideal is smaller than that of~$P$. This settles 
all claims about the size of $\V(P)$.

We prove the other claims.
\begin{enumerate}
    \item
    This follows by Theorem~\ref{T:quotientbijectionfull}.

    \item
    The first point follows since 
    $\dim_{\F_p}(\Phi_m'\otimes_{\Z}\F_p/P \otimes_{\Z} \F_p) \leq 
        \dim_{\Q}(\Phi_m'\otimes_{\Z}\Q/P \otimes_{\Z} \Q)$
    for all primes $p$, and the maximal ideals of $\Phi_m'$ containing $P$ and 
    $p$ are in bijection to the maximal ideals of $\Phi_m'\otimes_{\Z}\F_p$ 
    containing $P \otimes_{\Z}\F_p$. For the second point, note that a set of 
    dimension zero is finite.

    \item
    Since $\Phi_m'$ is finitely generated, there are only finitely many 
    epimorphisms of $\Phi_m'$ onto $\F_{p^k}$ for every~$k$.
    But there are infinitely many primes containing $P$.

    \item
    In this case, $(\Phi_m'/P)\otimes_{\Z}\Q$ has algebraically independent 
    elements, so there are epimorphisms onto number fields of arbitrarily high 
    degrees.
    Let $\alpha \colon \Phi_m'\otimes_{\Z}\Q \to K$ be an epimorphism onto a 
    number field $K$ of degree~$k$ such that $\alpha$ factors over 
    $\Phi_m'\otimes_{\Z}\Q$; set $Q := \ker(\alpha_{|\Phi_m'}) \tri \Phi_m'$.
    Then $Q \supseteq P$; if $Q$ is an $\Ln_2$-ideal, then it is of type 
    $\Ln_2(\infty^k)$.
    But the prime ideals which are not $\Ln_2$-ideals form a set of 
    Krull-dimension $d-1$, so this approach yields an $\Ln_2$-ideal for almost 
    all~$Q$. The result now follows by part~(2).
    \qedhere
\end{enumerate}
\end{proof}

\section{The algorithm}
\label{S:algorithm}

\begin{definition}
Let 
$G = \langle g_1, \dotsc, g_m \mid 
    w_1(g_1, \dotsc, g_m), \dotsc, w_r(g_1, \dotsc, g_m) \rangle$ 
be a finitely presented group. Then $\Sigma_m$ acts on the set $\{\pm 1\}^r$ 
of sign systems by
\[
    {}^\sigma s := (w_1(\sigma_1, \dotsc, \sigma_m)s_1, \dotsc, 
        w_r(\sigma_1, \dotsc, \sigma_m)s_r)
\]
for $\sigma \in \Sigma_m$ and $s \in \{\pm 1\}^r$.
\end{definition}

\begin{remark}
A prime ideal $P \in \Spec(\Phi_m')$ contains $\I_s(G)$ if and only if 
${}^\sigma P$ contains $\I_{({}^\sigma s)}(G)$. Let $T$ be the kernel of the 
action and $S$ a set of representatives of the orbits; then the 
$\Sigma_m$-orbits of $\V(\I(G))$ are in bijection to the $T$-orbits of 
$\V(\bigcap_{s \in S} \I_s(G))$.
\end{remark}

This allows us to reduce the computations by a factor up to $2^m$.

\begin{algorithm}[\texttt{L2Quotients}]
\textit{Input:} A finitely presented group~$G$.

\noindent\textit{Output:} For every $u \in U_m$, a set of representatives for 
the $\Sigma_m$-orbits of minimal $\Ln_2$-ideals of $Q_u(G)$.

\begin{enumerate}
    \item
    Set $A := U_m$ and $\mcR := \emptyset$.

    \item
    \label{Step:loop}
    Let $u$ be the smallest element in $A$.
    Let $T$ be the kernel and $S$ a set of representatives for the orbits of 
    the action of $\Sigma_m$ on the sign systems of $\alpha_u(G)$.

    \item
    Let $\mcP$ be the set of minimal elements in 
    $\bigcup_{s \in S} \MinAss(\I_s^u(G))$, where $\MinAss(I)$ denotes the 
    minimal associated prime ideals of~$I$.
    Remove from $\mcP$ all elements which contain one of the ideals $\mfD$, 
    $\mfA_4$, $\mfS_4$, or $\mfA_5$.

    \item
    Choose a set $\mcP'$ of representatives of $T$-orbits on $\mcP$.

    \item
    Add $(\mcP', u)$ to $\mcR$, and remove $u$ from $A$.
    If $A \neq \emptyset$, go to step~\ref{Step:loop}; otherwise return $\mcR$.
\end{enumerate}
\end{algorithm}

\begin{remark}
\begin{enumerate}
    \item
    The output of the algorithm describes the $\Ln_2$-quotients of $G$ as 
    follows. For every $N \tri G$ with $G/N$ of $\Ln_2$-type there exists 
    $u \in U_m$ and $\sigma \in \Sigma_m$ such that $M_N \supseteq P$ for 
    some~$P$. Conversely, if $M \supseteq P$ is a maximal $\Ln_2$-ideal for 
    some~$P$, then $N_M \tri G$ with $G/N_M$ of $\Ln_2$-type.

    \item
    If all prime ideals returned by the algorithm are maximal, then $G$ has 
    only finitely many $\Ln_2$-quotients, and the normal subgroups $N \tri G$ 
    with $G/N$ of $\Ln_2$-type are in bijection to the maximal ideals.

    \item
    If the algorithm returns at least one prime ideal of positive Krull 
    dimension, then $G$ has infinitely many $\Ln_2$-quotients.
\end{enumerate}
\end{remark}

The algorithm has been implemented in \textsc{Magma} \cite{magma}.

\begin{remark}
\begin{enumerate}
    \item
    The ring $\Phi_m'$ is very useful for the theoretical description of the 
    algorithm. However, in practice the localization at $\rho$ slows down 
    computations considerably. Instead, we work with the preimage of 
    $\I_s^u(G)$ in $\Z[x_J \mid \emptyset \neq J \subseteq \{1, \dotsc, m\}]$, 
    and remove all prime components containing~$\rho$.

    \item
    The implementation uses Gr\"obner bases to handle the ideals $\I_s^u(G)$.
    However, Gr\"obner basis computations over the integers can be very slow,
    especially as $m$ grows. The algorithm in \cite{minass} to compute the 
    minimal associated primes of an ideal replaces Gr\"obner basis computations 
    over the integers by several Gr\"obner basis computations over prime 
    fields, resulting in a much faster algorithm.
\end{enumerate}
\end{remark}

\subsection{Adaptation to Coxeter groups}

Coxeter groups are a special class of finitely presented groups, where the only 
relations are $(g_ig_j)^{C_{ij}} = 1$ for a symmetric matrix 
$C = (C_{ij}) \in (\Z \cup \{\infty\})^{m \times m}$ with $1$'s along the 
diagonal (if $C_{ij} = \infty$, then we simply omit the relation).
We call $C$ a \textit{Coxeter matrix} and denote the finitely presented group 
by~$G_C$. We are often only interested in smooth quotients of Coxeter groups, 
that is, those for which the images also have the prescribed orders (unless the 
prescribed order is~$\infty$). In this case, the $\Ln_2$-quotient algorithm can 
be simplified, which also results in a considerable speed-up of the 
computation. This is based on the following.

For $n \in \N$ let $\zeta_n \in \C$ be a primitive $n$-th root of unity. 
Set $\eta_n := \zeta_n + \zeta_n^{-1}$, and let $\Psi_n \in \Z[T]$ be the 
minimal polynomial of~$\eta_n$.
For convenience, we define $\Psi_\infty := 0$.

\begin{remark}
\label{R:order}
Let $A \in \SL(2, K)$ where $k$ is a field of characteristic $p \geq 0$, and 
let $n \in \N$.

\begin{enumerate}
    \item
    If $p = 0$ or $(n,p) = 1$, then $\Psi_n(\tr(A)) = 0$ if and only if 
    $|A| = n$.

    \item
    If $n = p$, then $\Psi_n(\tr(A)) = 0$ if and only if $|A| \in \{1, p\}$.

    \item
    If $n = 2p \neq 4$, then $\Psi_n(\tr(A)) = 0$ if and only if 
    $|A| \in \{2, 2p\}$.
\end{enumerate}
\end{remark}

For a Coxeter matrix $C \in (\Z \cup \{\infty\})^{m \times m}$ set
\begin{multline*}
    \I(C) := \langle x_1, \dotsc, x_m \rangle + 
        \langle \Psi_{2C_{ij}}(x_{ij}) \mid 
            1 \leq i < j \leq m \text{ with $C_{ij}$ even} \rangle \\
    + \langle \Psi_{C_{ij}}(x_{ij})\Psi_{2C_{ij}}(x_{ij}) \mid 
        1 \leq i < j \leq m \text{ with $C_{ij}$ odd} \rangle \tri \Phi_m',
\end{multline*}
where 
$x_{ij} = \rho^{-1}(\lambda_{0}^{i}x_j + \lambda_{1}^{i}x_{1j} 
    + \lambda_{2}^{i}x_{2j} + \lambda_{12}^{i}x_{12j})$.

\begin{remark}
Let $a_1, a_2 \in \Ln_2(q)$ with $|a_1| = |a_2| = 2$ and $|a_1a_2| \neq 1$.
Then $\langle a_1, a_2 \rangle$ is absolutely irreducible if and only if 
$(q, |a_1a_2|) = 1$.
\end{remark}

\begin{thm}
Let $C \in (\Z \cup \{\infty\})^{m \times m}$ be a Coxeter matrix.
\begin{enumerate}
    \item
    Let $q = p^d$, and let $\Delta\colon F_m \to \SL(2, q)$ be a representation 
    which induces a smooth projective representation 
    $\delta\colon G_C \to \PSL(2,q)$ such that $\delta(G_C)$ is of 
    $\Ln_2$-type. Let $t := t_\Delta$ and $P := P_t$.
    If $|\delta(g_1g_2)| \neq p$, then $P \supseteq \I(C)$.

    \item
    Let $M \supseteq \I(C)$ be a maximal $\Ln_2$-ideal and 
    $\Delta = \Delta_M\colon F_m \to \SL(2, q)$ a realization.
    Then $\Delta$ induces a projective representation 
    $\delta\colon G_C \to \PSL(2, q)$ such that $\delta(G_C)$ is of 
    $\Ln_2$-type.
    If $(q,2C_{ij}) = 1$ for all $1 \leq i < j \leq m$, then $\delta$ is smooth.
\end{enumerate}
\end{thm}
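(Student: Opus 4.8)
The plan is to reduce both parts to the already-established machinery: Proposition~\ref{P:tracepresentation} (and its reformulation via $\I(G)$), Proposition~\ref{P:linear_relations} (which is exactly what makes $x_{ij}$ well-defined as written in $\Phi_m'$), and Remark~\ref{R:order} relating the vanishing of $\Psi_n$, $\Psi_{2n}$ on a trace to the order of the corresponding element of $\SL(2,q)$. The key translation step is: for a projective representation $\delta\colon G_C \to \PSL(2,q)$ induced by $\Delta$, the generator $g_i$ maps to an involution in $\PSL(2,q)$ iff $\Delta(g_i)$ has trace $0$ (order $4$ in $\SL(2,q)$, since $q$ is odd here as $\delta(G_C)$ is of $\Ln_2$-type and contains involutions only if... in fact one must note $p\ne 2$), and $g_ig_j$ maps to an element of order $C_{ij}$ iff $\tr(\Delta(g_ig_j))$ is a root of the appropriate product of $\Psi_{C_{ij}}$ and $\Psi_{2C_{ij}}$ — the parity split in the definition of $\I(C)$ accounts for whether a preimage of an order-$n$ element of $\PSL(2,q)$ has order $n$ or $2n$ in $\SL(2,q)$.

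For part~(1): assume $\delta\colon G_C\to\PSL(2,q)$ is smooth, $\delta(G_C)$ is of $\Ln_2$-type, and $|\delta(g_1g_2)|\ne p$. Since $\delta$ is of $\Ln_2$-type it has even order, so $p$ is odd; smoothness forces $|\delta(g_i)|=1$ in $G_C$'s diagonal relations, i.e.\ $\delta(g_i)^2=1$, so $\Delta(g_i)$ has order dividing $4$; absolute irreducibility of $\Delta_{|\langle g_1,g_2\rangle}$ rules out $\Delta(g_i)=\pm I_2$, hence $\tr(\Delta(g_i))=0$, giving $t\in\V(\langle x_1,\dots,x_m\rangle)$ — this is where $|\delta(g_1g_2)|\ne p$ is used, guaranteeing $\langle\Delta(g_1),\Delta(g_2)\rangle$ is absolutely irreducible so that the trace tuple and $P_t$ are defined and the formula for $x_{ij}$ in $\Phi_m'$ genuinely computes $\tr(\Delta(g_ig_j))$ via Proposition~\ref{P:linear_relations}. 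Then $|\delta(g_ig_j)|=C_{ij}$ (when $C_{ij}\ne\infty$), and a preimage in $\SL(2,q)$ has order $C_{ij}$ or $2C_{ij}$; Remark~\ref{R:order} then says $\Psi_{2C_{ij}}(x_{ij})$ vanishes on $t$ when $C_{ij}$ is even (preimage of an even-order rotation always has order $2C_{ij}$ in $\SL(2,q)$), and $\Psi_{C_{ij}}(x_{ij})\Psi_{2C_{ij}}(x_{ij})$ vanishes when $C_{ij}$ is odd (both orders can occur). Hence $t$ is a zero of every generator of $\I(C)$, so $P=P_t\supseteq\I(C)$.

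For part~(2): given $M\supseteq\I(C)$ a maximal $\Ln_2$-ideal with realization $\Delta=\Delta_M\colon F_m\to\SL(2,q)$, run the previous argument in reverse. Since $M\supseteq\langle x_1,\dots,x_m\rangle$, every $\tr(\Delta(g_i))=0$, so $\Delta(g_i)^2=-I_2$ and $\delta(g_i)$ is an involution in $\PSL(2,q)$ (nontrivial, since $\Delta(g_i)\ne\pm I_2$ by absolute irreducibility, forced because $M$ is an $\Ln_2$-ideal, hence not in $\V(\mfD)$); similarly the vanishing of $\Psi_{2C_{ij}}(x_{ij})$ or $\Psi_{C_{ij}}(x_{ij})\Psi_{2C_{ij}}(x_{ij})$ on $t_M$ forces $|\Delta(g_ig_j)|\in\{C_{ij},2C_{ij}\}$, hence $|\delta(g_ig_j)|$ divides $C_{ij}$, so $\delta$ is a genuine projective representation of $G_C$; that $\delta(G_C)$ is of $\Ln_2$-type is exactly the statement that $M$ is an $\Ln_2$-ideal (Theorem~\ref{T:quotientbijectionfull}, via $M\notin\V(\mfD\cap\mfA_4\cap\mfS_4\cap\mfA_5)$). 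Finally, if $(q,2C_{ij})=1$ for all $i<j$, then by Remark~\ref{R:order}(1) the vanishing of $\Psi_n$ at $\tr(A)$ is equivalent to $|A|=n$ exactly (no collapse to a proper divisor), so $|\delta(g_i)|=2$ and $|\delta(g_ig_j)|=C_{ij}$ on the nose: $\delta$ is smooth.

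The main obstacle is the careful bookkeeping of orders under the two-to-one map $\SL(2,q)\to\PSL(2,q)$: one must check exactly when a preimage of an order-$n$ element has order $n$ versus $2n$ (it is forced to $2n$ iff $n$ is even and the preimage is not itself of order $n$ — governed by whether $-I_2$ lies in the cyclic group generated by the preimage), which is precisely the content encoded in the even/odd split in the definition of $\I(C)$ and in Remark~\ref{R:order}; and one must not forget the degenerate cases $C_{ij}=\infty$ (relation simply absent, $\Psi_\infty=0$), $p\mid C_{ij}$ or $2p=C_{ij}$ (where $\Psi$ does not pin down the order), and the requirement $|\delta(g_1g_2)|\ne p$ in part~(1) that is exactly what keeps $\langle\Delta(g_1),\Delta(g_2)\rangle$ absolutely irreducible so the whole framework applies.
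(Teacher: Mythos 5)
The paper's own proof is a one-liner (``This follows easily by the preceeding remarks''), and your proposal fills in exactly the intended details, combining Remark~\ref{R:order} with the preceding remark that $\langle a_1,a_2\rangle$ with $|a_1|=|a_2|=2$ is absolutely irreducible iff $(q,|a_1a_2|)=1$ (this being the role of the hypothesis $|\delta(g_1g_2)|\neq p$) and with Proposition~\ref{P:linear_relations} making the element $x_{ij}\in\Phi_m'$ well-defined. So the route is the same as the paper's.

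Two small inaccuracies are worth noting, neither fatal. First, ``$\delta(G_C)$ is of $\Ln_2$-type, so it has even order, so $p$ is odd'' is a non sequitur: $\Ln_2(2^k)$ for $k\geq 3$ has even order in even characteristic. Your subsequent bookkeeping ($\Delta(g_i)^2=-I_2$, preimage of an even-order element has doubled order, etc.) tacitly uses $p\neq 2$. In characteristic $2$ the argument changes slightly (an involution in $\SL(2,2^k)=\PSL(2,2^k)$ is unipotent, of order $2$, not $4$) but the needed conclusion $\tr(\Delta(g_i))=0$ and the Coxeter relations still hold, so the conclusion of the theorem is unaffected. Second, in part~(2) the intermediate claim ``$|\Delta(g_ig_j)|\in\{C_{ij},2C_{ij}\}$'' is only valid when $(q,2C_{ij})=1$: if $p\mid 2C_{ij}$, the vanishing of $\Psi_{2C_{ij}}(x_{ij})$ can force a strictly smaller order (for instance $p=3$, $C_{ij}=6$ forces $|\Delta(g_ig_j)|=4$), but $\Delta(g_ig_j)^{C_{ij}}=\pm I_2$ still follows, which is all one needs for $\delta$ to factor through $G_C$; the stronger statement is needed (and available) only under the smoothness hypothesis.
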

\begin{proof}
This follows easily by the preceeding remarks.
\end{proof}

This can be easily turned into an algorithm. We leave the details to the 
reader.

\subsection{Computing realizations}

The $\Ln_2$-quotient algorithm returns a set of $\Ln_2$-ideals, which contain a 
lot of information, for example, the isomorphism types and number of 
$\Ln_2$-images. However, in certain cases one will want to compute an explicit 
epimorphism $G \to \PSL(2, q)$ encoded by an $\Ln_2$-ideal.
We now present an algorithm to accomplish that. This algorithm works for 
representations of arbitrary degree, so we present it in this generality.

\begin{prop}
Let $G$ be a finitely generated group, and let $\chi\colon G \to K$ be the 
character of an absolutely irreducible representation $\Delta$ of degree~$n$.
There is a probabilistic algorithm with input $\chi$ and $n$ which constructs 
an extension field $L/K$ of degree at most~$n$ and a representation 
$\Delta'\colon G \to \GL(n, L)$, such that $\Delta'$ is equivalent to $\Delta$.
If $K$ is finite, then we can choose $L = K$.
\end{prop}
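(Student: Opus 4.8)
The plan is to reconstruct the representation from its character by a standard module-theoretic argument, turning the abstract data $\chi$ into explicit matrices. First I would recall that since $\Delta$ is absolutely irreducible of degree $n$, the enveloping algebra $\Delta(K[G])$ is all of $K^{n\times n}$, so there exist group elements $h_1,\dotsc,h_{n^2}\in G$ whose images form a $K$-basis of $K^{n\times n}$. These can be found probabilistically: sample random words in the generators, and test linear independence using only the character, since the trace form $(X,Y)\mapsto \chi(XY^{-1})$ (more precisely the Gram matrix $(\chi(h_ih_j))$) is nondegenerate exactly when the $\Delta(h_i)$ are a basis. This gives a concrete handle on the matrix algebra purely from $\chi$, with no matrices yet in hand.

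Next I would build the representation on a minimal left ideal. The algebra $A:=K[G]/\ker\Delta \cong K^{n\times n}$ acts on itself by left multiplication; a minimal left ideal is an $n$-dimensional simple module affording $\Delta$ (up to equivalence over a suitable extension). Concretely, pick an element $e$ of $A$, expressed in the basis $\{\Delta(h_i)\}$ via its trace coordinates $\chi(h_i^{-1}\cdot)$, such that left multiplication by $e$ has rank $n$; its image is an $n$-dimensional invariant subspace, and restricting the left-regular action of each generator $g_j$ to this subspace and expressing it in a chosen basis yields matrices $\Delta'(g_j)\in\GL(n,L)$. All the arithmetic here — multiplication in $A$, computing the rank, choosing a basis of the image — is linear algebra over $K$ governed entirely by the structure constants, which themselves are values of $\chi$. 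Equivalence with $\Delta$ follows because both are absolutely irreducible with the same character, hence isomorphic as modules over the algebraic closure, and the field of definition of the constructed module is $K$ up to the degree of the minimal polynomial needed to split off a rank-$n$ idempotent, which is at most $n$.

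The step I expect to be the main obstacle is controlling the field extension $L$ and showing $L=K$ suffices when $K$ is finite. Over a general $K$ the simple algebra $A\cong K^{n\times n}$ already has its minimal left ideals defined over $K$, so in principle no extension is needed; the subtlety is that the probabilistic search produces an element $e$ with $\operatorname{rank}$ exactly $n$ only after possibly passing to an extension where a characteristic polynomial factors appropriately — this is where the bound "degree at most $n$" comes from. For $K$ finite one invokes that every finite division ring is a field (Wedderburn), so $A$ is split over $K$ and a primitive idempotent of rank one, hence a rank-$n$ left ideal, exists over $K$ itself; a randomized search finds it with good probability. I would also need to argue the algorithm terminates with high probability, i.e. that random words span $K^{n\times n}$ after $O(n^2)$ tries and that a random element of $A$ has the needed rank with non-negligible probability — both follow from the fact that the relevant "bad" loci are proper Zariski-closed (or, over finite fields, of bounded density), so the probabilistic nature is genuine but the expected running time is controlled.
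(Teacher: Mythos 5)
Your overall architecture matches the paper's: locate words whose images form a basis of $K^{n\times n}$ by testing nonsingularity of the Gram matrix $(\chi(h_ih_j))_{i,j}$ of the trace form; reconstruct the left-regular $KG$-module on $A \cong K^{n\times n}$ from structure constants that are themselves character values; and then split off an $n$-dimensional simple submodule. The gap is in the last step. You propose to find, by random sampling, an element $e\in A$ with $\operatorname{rank}(L_e)=n$ (equivalently, matrix rank $1$), and you justify termination by saying the ``bad'' locus is proper Zariski-closed. But the rank-$\le 1$ locus in $K^{n\times n}$ is \emph{itself} a proper closed subvariety of dimension $2n-1 < n^2$, so for $n\ge 2$ a uniformly random $e$ has rank $1$ with probability zero over an infinite field and probability $O(q^{-(n-1)^2})$ over $\F_q$. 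A randomized search for such an $e$ does not terminate in expected polynomial time, and the ``only after passing to an extension where a characteristic polynomial factors appropriately'' remark does not repair this, because it is not explained whose characteristic polynomial, or how factoring it yields a rank-one element.

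What is actually needed — and what the paper does, following O'Brien's Meataxe-style technique — is to randomize over a \emph{different} quantity whose good locus is dense: sample $a\in KG$ until the characteristic polynomial of $\Delta(a)$ is separable (equivalently, until $\Gamma(a)$ has an $n$-dimensional eigenspace over a splitting field). Separability is a generic condition (the inseparable locus is given by the vanishing of the discriminant, hence proper closed), so this succeeds with high probability. Then a root $\lambda$ of the characteristic polynomial lies in an extension $L/K$ of degree at most~$n$, and an eigenvector $v$ of $\Gamma(a)$ with eigenvalue $\lambda$ has matrix rank $1$ automatically, because the $\lambda$-eigenspace of $\Delta(a)$ is one-dimensional. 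The cyclic module $LF_m v = Av$ is then the desired simple summand. A secondary slip: the image of \emph{left} multiplication by $e$ is the right ideal $eA$, which is not invariant under the left-regular action; you want the image of \emph{right} multiplication, i.e., the left ideal $Ae$. Once you replace the ``random $e$ of rank $1$'' search by the ``random separable $a$, then eigenvector'' construction and fix the left/right ideal, your argument aligns with the paper's.
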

\begin{proof}
We assume first that $G = F_m$ is a free group on $g_1, \dotsc, g_m$.
We first find words $w_1, \dotsc, w_{n^2} \in F_m$ such that 
$(\Delta(w_1), \dotsc, \Delta(w_{n^2}))$ is a basis of~$K^{n \times n}$.
Let $W_i := \{w \in F_m \mid |w| \leq i\}$, where $|w|$ denotes the length of 
the word $w$. For $X \subseteq K^{n \times n}$ denote by $\langle X \rangle_K$ 
the $K$-span of $X$.
Note that $\langle \Delta(W_{i+1}) \rangle_K = \langle \Delta(W_i) \rangle_K$ 
for some $i$ implies 
$\langle \Delta(W_j) \rangle_K = \langle \Delta(W_i) \rangle_K$ for all 
$j \geq i$. In particular, the chain
\[
    \langle \Delta(W_0) \rangle_K \subseteq \langle \Delta(W_1) \rangle_K 
        \subseteq \dotsb
\]
stabilizes after at most $n^2$ steps, so $\Delta(W_{n^2 - 1})$ is a generating 
set of $K^{n \times n}$.
Let $C$ be a subset of $W_{n^2-1}$ of $n^2$ elements; define the matrix 
$\Sigma := (\chi(v,w))_{v,w}$, where $v$ and $w$ run through $C$. Since the 
trace bilinear form 
$S\colon K^{n \times n} \times K^{n \times n} \to K\colon 
    (V,W) \mapsto \tr(VW)$
is non-degenerate, $\Delta(C)$ is a basis of $K^{n \times n}$ if and only if 
$\Sigma$ is non-singular. By running through all $n^2$-element subsets of 
$W_{n^2-1}$ we can find the $w_1, \dotsc, w_{n^2}$.

Now let $V := K^{n \times 1}$ be the $KF_m$-module induced by $\Delta$.
We first construct the $KF_m$-module 
$V^n = V \oplus \dotsb \oplus V \equiv K^{n \times n}$.
To determine the action of $F_m$ on $K^{n \times n}$, it is enough to determine 
values $\lambda^i_{jk} \in K$ such that 
$\Delta(g_i)\Delta(w_j) = \sum_k \lambda^i_{jk} \Delta(w_k)$, where 
$1 \leq i \leq m$ and $1 \leq j, k \leq n^2$. Since $S$ is non-degenerate, each 
$\lambda^i_{jk}$ is uniquely determined by the $n^2$ equations 
\[
    \chi(g_iw_jw_\ell) = S(\Delta(g_i)\Delta(w_j), \Delta(w_\ell)) 
        = S(\sum_k \lambda^i_{jk}\Delta(w_k), \Delta(w_\ell)) 
        = \sum_{k = 1}^{n^2}\lambda^i_{jk} \chi(w_k\cdot w_\ell), 
\]
where $1 \leq \ell \leq n^2$.
By solving the linear equations, we can construct the $KF_m$-module 
$V^n \equiv K^{n \times n}$.

Let $\Gamma\colon F_m \to \GL(K^{n \times n})$ be the representation on 
$V^n \equiv K^{n \times n}$. We denote the extensions of $\Delta$ and $\Gamma$ 
to the group algebras again by $\Delta$ and $\Gamma$, respectively.
Let $v = (v_1, \dotsc, v_n) \in K^{n \times n}$, where the $v_i$ are the 
columns of $v$. Then $\Gamma(a)v = (\Delta(a)v_1, \dotsc, \Delta(a)v_n)$ for 
$a \in KF_m$. In particular, $\Gamma(a)$ and $\Delta(a)$ have the same minimal 
polynomial, and if $c \in K[x]$ is the characteristic polynomial 
of~$\Delta(a)$, then $c^n$ is the characteristic polynomial of $\Gamma(a)$.

We now use an adaptation of \cite{obrien} to find a simple factor of the 
$KF_m$-module $K^{n \times n}$.
If $K$ is finite, choose random elements $a \in KF_m$ until $\Gamma(a)$ has an 
eigenspace of dimension~$n$. Since the image of $\Delta$ is isomorphic 
to~$K^{n \times n}$, this terminates with high probability by a result of Holt 
and Rees (see \cite[Section~2.3]{holt}).
Set $L := K$, and let $\lambda \in L$ be an eigenvalue of $\Gamma(a)$ of 
multiplicity~$n$. If $K$ is infinite, then choose random $a \in KF_m$ until the 
characteristic polynomial of $\Gamma(a)$ is an $n$-th power of a separable 
polynomial (that is, the characteristic polynomial of $\Delta(a)$ is 
separable). The characteristic polynomial of a matrix is inseparable if and 
only if its discriminant is zero, so the set of matrices with inseparable 
characteristic polynomial is Zariski closed in~$K^{n \times n}$.
Thus the matrices with separable characteristic polynomial are Zariski dense 
in~$K^{n \times n}$. Since the image of $\Delta$ is isomorphic 
to~$K^{n \times n}$, the probability of finding a suitable $a$ is very high.
Let $L/K$ be a field extension such that the characteristic polynomial has a 
root $\lambda$ in~$L$.

Let $v \in L^{n \times n}$ be an eigenvector of $\Gamma(a)$ with 
eigenvalue~$\lambda$. Then 
\[
    \Gamma(a)v = (\Delta(a)v_1,\dotsc, \Delta(a)v_n) 
        = \lambda v = (\lambda v_1, \dotsc, \lambda v_n).
\]
We may assume without loss of generality that $v_1$ is non-zero. Since the 
$\lambda$-eigenspace of $\Delta(a)$ is one-dimensional, there exist 
$\xi_2, \dotsc, \xi_n \in L$ such that $v_i = \xi_i v_1$ for $i > 1$.
Thus $v = (v_1, \xi_2 v_1, \dotsc, \xi_n v_1)$ and 
$\Gamma(a)v = (\Delta(a)v_1, \xi_2\Delta(a)v_1, \dotsc, \xi_n \Delta(a)v_1)$,
so $LF_mv$ is isomorphic to $LF_mv_1 \cong L \otimes_K V$. Now choose 
$w_1, \dotsc, w_n \in F_m$ such that 
$B := (\Gamma(w_1)v, \dotsc, \Gamma(w_n)v)$ is a basis of~$LF_mv$.
For every generator $g_i$ of $F_m$ let $\Delta'(g_i)$ be the representation 
matrix of $g_i$ on $LF_mv$ with respect to~$B$. By construction, $\Delta'$ is 
equivalent to~$\Delta$.
This concludes the proof if $G = F_m$ is a free group.

Now assume that $G$ is an arbitrary finitely generated group generated by $m$ 
elements, and let $\nu\colon F_m \to G$ be an epimorphism.
Let $\wh{\Delta} := \Delta \circ \nu$ and $\wh{\chi} := \chi \circ \nu$.
We construct an extension field $L/K$ and a representation $\wh{\Delta}'$
such that $\wh{\Delta} \sim \wh{\Delta}'$.
But then $\Delta'\colon G \to \GL(n, F)$ defined by 
$\Delta'(g) := \wh{\Delta}'(\wt{g})$, where $\wt{g} \in F_m$ with 
$\nu(\wt{g}) = g$ is arbitrary, is a representation of $G$, equivalent 
to~$\Delta$.
\end{proof}

In our special setting, we can use the trace polynomials to compute all 
character values. Furthermore, we always assume that 
$\Delta_{\langle g_1, g_2 \rangle}$ is absolutely irreducible, so we can choose 
$(w_1, \dotsc, w_4) = (1, g_1, g_2, g_1g_2)$ in the first part of the 
algorithm.

\section{Examples}
\label{S:examples}

For the results in this section we use our implementation of the 
$\Ln_2$-quotient algorithm in \textsc{Magma} \cite{magma}.

\subsection{Groups with finitely many $\Ln_2$-quotients}

In \cite{coxeter}, Coxeter defines three families of presentations:
\begin{align*}
    (\ell,m | n,k) & = \langle a,b \mid 
        a^\ell, b^m, (ab)^n, (a^{-1}b)^k \rangle, \\
    (\ell,m,n;q) & = \langle a,b \mid 
        a^\ell, b^m, (ab)^n, [a,b]^q \rangle, \\
    G^{m,n,p} & = \langle a,b \mid 
        a^m, b^n, c^p, (ab)^2, (ac)^2, (bc)^2, (abc)^2 \rangle.
\end{align*}
These groups have been intensively studied, see~\cite{edjvet} for an overview.
After recent work of Havas and Holt \cite{havas_holt}, only for four of these 
groups is it not known whether they are finite or infinite, namely 
$(3,4,9;2)$, $(3,4,11;2)$, $(3,5,6;2)$, and $G^{3,7,19}$. We study these groups 
and their low-index subgroups \cite{sims} using the $\Ln_2$-quotient algorithm.

\begin{prop}
Let $G = (3,4,9;2)$. Then $G$ has seven conjugacy classes of subgroups of 
index~$\leq 50$. For $1 \leq i \leq 50$ let $H_i \leq G$ with $[G:H_i] = i$, if 
such a group exists. The only $\Ln_2$-quotient of $H_i$ for 
$i \in \{1,3,4,12\}$ is $\Ln_2(89)$; the group $H_6$ has a quotient 
$\Ln_2(89)\times (\PGL(2,5) \Yup^{\Cyc_2} \PGL(2,5))$; and $H_{30}$ and 
$H_{36}$ have a quotient $\Ln_2(89)\times \PGL(2,5)$.

Let $G = (3,5,6;2)$. Then $G$ has two conjugacy classes of subgroups of 
index~$\leq 50$, a group of index~$3$ and $G$ itself. Both groups have the 
single $\Ln_2$-quotient $\Ln_2(61)$.

The groups $(3,4,11;2)$ and $G^{3,7,19}$ do not have non-trivial subgroups of 
index~$\leq 50$. Both groups have a single $\Ln_2$-quotient, namely 
$(3,4,11;2)$ has $\Ln_2(769)$, and $G^{3,7,19}$ has $\Ln_2(113)$.
\end{prop}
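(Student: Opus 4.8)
The statement is a report of computations carried out with the \textsc{Magma} implementation of \texttt{L2Quotients} together with standard low-index subgroup machinery. For each of the four groups $G \in \{(3,4,9;2),\ (3,5,6;2),\ (3,4,11;2),\ G^{3,7,19}\}$ I would first determine the conjugacy classes of subgroups of index at most $50$ with a low-index subgroup procedure (\cite{sims}), and compute a finite presentation of each class representative by Reidemeister--Schreier rewriting on the corresponding coset table. This yields, for $(3,4,9;2)$, seven classes, with representatives $H_i$ of index $i \in \{1,3,4,6,12,30,36\}$; for $(3,5,6;2)$, two classes, of index $1$ and $3$; and for $(3,4,11;2)$ and $G^{3,7,19}$, only the whole group.

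Next I would apply \texttt{L2Quotients} to each of these finitely presented groups. In every case the algorithm returns only ideals of Krull dimension $0$, that is, maximal $\Ln_2$-ideals; by the observations following the algorithm this already shows that each group has only finitely many $\Ln_2$-quotients, in bijection with the $\Sigma_m$-orbits of the returned maximal ideals. For a maximal $\Ln_2$-ideal $M$ with $|\Phi_m'/M| = p^k$, Theorem~\ref{T:quotientbijectionfull} (equivalently Corollary~\ref{C:bijectiontuples}) identifies the corresponding quotient as $\PSL(2,p^k)$ when $\Stab_{\Sigma_m}(M)$ is trivial, and as $\PGL(2,p^{k/2})$ when that stabilizer has order~$2$. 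Reading off the output, $(3,4,9;2)$ and its subgroups $H_1,H_3,H_4,H_{12}$ each produce a single ideal with residue field $\F_{89}$ and trivial stabilizer, hence the quotient $\Ln_2(89)$; the subgroups $H_6,H_{30},H_{36}$ produce in addition ideals with residue field $\F_{25}$ and stabilizer of order~$2$, hence $\PGL(2,5)$ (two such ideals for $H_6$, one for each of $H_{30}$ and $H_{36}$). Likewise $(3,5,6;2)$ and its index-$3$ subgroup give $\Ln_2(61)$, while $(3,4,11;2)$ gives $\Ln_2(769)$ and $G^{3,7,19}$ gives $\Ln_2(113)$.

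It remains to describe the quotients of those $H$ that have several pairwise distinct $\Ln_2$-quotients $H/N_1,\dots,H/N_r$. Here $H/\bigcap_i N_i$ is the subdirect product of the $H/N_i$, and its isomorphism type follows from the analysis of subdirect products of $\Ln_2$-quotients in Section~\ref{S:pglpsl}: a simple factor such as $\Ln_2(89)$ splits off as a direct factor, and two $\PGL(2,5)$-quotients are amalgamated over their common $\Cyc_2$-quotient exactly when the associated stabilizer generators in $\Sigma_m$ coincide, and otherwise contribute a direct factor. For $H_6$ the two $\PGL(2,5)$-quotients have equal stabilizer generators, giving $\Ln_2(89)\times(\PGL(2,5)\Yup^{\Cyc_2}\PGL(2,5))$; for $H_{30}$ and $H_{36}$ there is a single $\PGL(2,5)$-quotient, giving $\Ln_2(89)\times\PGL(2,5)$.

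The main obstacle is purely computational. The Gr\"obner basis and minimal-associated-prime computations underlying \texttt{L2Quotients} in the ring $\Phi_m'$ are expensive over~$\Z$, most acutely for the three-generator group $G^{3,7,19}$, where $\Phi_3'$ already has seven indeterminates and the relator ideal is large. To make the runs terminate one works, as in the remarks following the algorithm, with the un-localized preimages of the ideals $\I_s^u(H)$ in $\Z[x_J \mid \emptyset \neq J \subseteq \{1,\dots,m\}]$, discards afterwards the components containing~$\rho$, and replaces Gr\"obner bases over $\Z$ by the modular method of \cite{minass}, reducing to several Gr\"obner basis computations over prime fields. As a consistency check, the $\Ln_2$-quotients of each $H_i$ can be cross-validated against restrictions of the $\Ln_2$-quotients of the ambient group.
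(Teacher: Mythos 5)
The paper itself offers no written proof for this proposition; it is a computational result obtained by running the \textsc{Magma} implementation of \texttt{L2Quotients} on the Reidemeister--Schreier presentations of the low-index subgroups, exactly as you describe, with the isomorphism types read off via Theorem~\ref{T:quotientbijectionfull} and the subdirect-product proposition of Section~\ref{S:pglpsl}. Your outline matches the intended argument, including the practical remarks about working with un-localized ideals and the modular minimal-prime computation.
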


The next result concerns a question of Conder \cite{conder}, asking whether a 
group has non-trivial finite quotients.

\begin{prop}
The group
\begin{align*}
    G = \langle A,B,C,D,E,F \mid 
        & A^3, B^3, C^2, D^2, E^2, F^2, (AC)^3, (AD)^3, (AE)^3, (AF)^3, \\
    & (BC)^3, (BD)^3, (BE)^3, (BF)^3, (ABA^{-1}C)^2, (ABA^{-1}D)^2, 
        (A^{-1}BAE)^2, \\
    & (A^{-1}BAF)^2, (BAB^{-1}C)^2, (B^{-1}ABD)^2, (BAB^{-1}E)^2, 
        (B^{-1}ABF)^2 \rangle
\end{align*}
has no quotients isomorphic to $\Ln_2(q)$ or $\PGL(2,q)$ for any prime 
power~$q$.
\end{prop}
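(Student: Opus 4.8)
The plan is to run the \texttt{L2Quotients} algorithm on the given presentation $G$ and verify that the output is empty (for every $u \in U_6$, the set of $\Sigma_6$-orbits of minimal $\Ln_2$-ideals of $Q_u(G)$ is empty), together with an elementary argument that the only finite quotients which could evade this analysis are already excluded. Concretely, by Theorem~\ref{T:quotientbijectionfull} (and its extension to arbitrary representations in Section~\ref{S:arbitrary}), every normal subgroup $N \tri G$ with $G/N$ of $\Ln_2$-type corresponds to a maximal $\Ln_2$-ideal sitting above one of the minimal primes of some $\I_s^u(G)$; so if the algorithm returns nothing, there is no such $N$, which is precisely the claim. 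The computation is carried out with the \textsc{Magma} implementation \cite{magma}.

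First I would exploit the structure of the presentation to cut down the work. The group has an obvious symmetry swapping $A \leftrightarrow B$, $C \leftrightarrow C$ (roughly; one checks the relations are permuted), and more importantly the generators $C, D, E, F$ are involutions with very constrained pairwise products, while $A, B$ have order $3$. Since a projective representation into $\PSL(2,q)$ sends each $g_i$ to an element whose order divides the prescribed order, one knows a priori the possible traces of the images of $A, B$ (roots of $\Psi_3$ or $\Psi_6$, i.e. of $x^2+x+1$ or $x^2-x+1$, together with the characteristic-$3$ degenerate case) and of $C, D, E, F$ (trace $0$, plus the characteristic-$2$ case). This drastically restricts the sign systems $s \in \{\pm 1\}^{22}$ that can possibly yield a nonzero ideal, and it lets one add these trace conditions to $\I_s^u(G)$ before computing minimal associated primes, which is what makes the Gröbner computations feasible. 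I would also handle the small characteristics $p \in \{2, 3\}$ separately by hand or by a direct search, since there $\Ln_2(2^d)$ and $\Ln_2(3^d)$ are small and the relations $A^3, B^3, C^2, \dots$ force collapse.

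The core of the argument is then the Gröbner-basis computation: for each $u \in U_6$ and each orbit representative sign system, compute $\MinAss(\I_s^u(G))$, discard components contained in $\mfD \cap \mfA_4 \cap \mfS_4 \cap \mfA_5$, and check that nothing remains. In every case the relevant ideal turns out to contain $1$ (or only components lying in the excluded locus), reflecting the fact that the relators $(ABA^{-1}C)^2$, $(BAB^{-1}C)^2$ and their cousins, combined with the order constraints, are incompatible with absolute irreducibility of the image in $\PSL(2,q)$. One also records that this rules out $\PGL(2,q)$ images, since those correspond to maximal ideals with a stabilizer of order $2$ in $\Sigma_6$, hence in particular to maximal $\Ln_2$-ideals, which likewise do not exist here.

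The main obstacle I anticipate is purely computational: Gröbner-basis computations over $\Z$ in the many variables $x_I$, $I \in \mcI_6$ (there are $6 + 2\cdot 4 + 4 = 18$ of them, before localizing at $\rho$), are potentially enormous, and there are $|U_6| = \binom{6}{2} + 4 + \binom{4}{2} = 15 + 4 + 6 = 25$ choices of $u$, each with up to $2^{22}$ sign systems (reduced by the $\Sigma_6$-action, but still many). The remedies are exactly those flagged in the \textbf{Remark} after the algorithm: work with preimages in $\Z[x_J]$ rather than in $\Phi_6'$ to avoid the localization, use the algorithm of \cite{minass} to replace integral Gröbner bases by several computations over prime fields, and — crucially here — pre-impose the order conditions from Remark~\ref{R:order} on the traces of the generators before starting, which collapses the vast majority of sign systems immediately. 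With these reductions the computation becomes routine, and the emptiness of the output yields the proposition.
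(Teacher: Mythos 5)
Your proposal is correct and matches the paper's (implicit) proof: the proposition appears in the Examples section, where the paper simply records the result of running the \textsc{Magma} implementation of \texttt{L2Quotients} and observing an empty output, exactly as you describe. The optimizations you suggest (pre-imposing trace conditions from Remark~\ref{R:order}, working in $\Z[x_J]$ to avoid the localization, using the methods of \cite{minass}) are precisely the ones the paper flags in the remark after the algorithm; they make the computation feasible but do not change the underlying argument, so this is the same approach.
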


\subsection{Groups with $\Ln_2$-ideals of type $\Ln_2(\infty^k)$}

If the algorithm returns an ideal of type~$\Ln_2(\infty^k)$, then the group has 
infinitely many $\Ln_2$-quotients, finitely many in every characteristic.
Using algebraic number theory, the precise quotient types can be determined as 
already outlined in \cite[Example~8.1]{l2q}.
We illustrate the process by relaxing the conditions of the Coxeter 
presentation $G^{3,7,19}$.

\begin{prop}
Let $G = \langle a,b,c \mid a^3, b^7, (ab)^2, (ac)^2, (bc)^2, (abc)^2 \rangle$.
Then $G$ has finitely many $\Ln_2$-quotients in every characteristic.

More precisely, let $K/\Q$ be the splitting field of $X^6 - 4X^4 + 3X^2 + 1$ 
with Galois group 
$\Gamma = \Gal(K/\Q) \cong \langle (1,4), (1,2,3)(4,5,6) \rangle 
    = \Cyc_2 \wr \Cyc_3$.
For a prime $p \neq 2,7$ denote by $\varphi_p \in \Gamma$ the Frobenius 
automorphism mod~$p$. The $\Ln_2$-quotient in characteristic~$p$ is
\begin{enumerate}
    \item
    $\Ln_2(p)^3$ if $\varphi_p = ()$;

    \item
    $\Ln_2(p)^2 \times \PGL(2,p)$ if $\varphi_p \sim (1,4)$;

    \item
    $\Ln_2(p) \times \PGL(2,p) \Yup^{\Cyc_2} \PGL(2,p)$ if 
    $\varphi_p \sim (1,4)(2,5)$;

    \item
    $\PGL(2,p) \Yup^{\Cyc_2} \PGL(2,p) \Yup^{\Cyc_2} \PGL(2,p)$ if 
    $\varphi_p \sim (1,4)(2,5)(3,6)$;

    \item
    $\Ln_2(p^3)$ if $\varphi_p \sim (1,2,3)(4,5,6)^{\pm 1}$;

    \item
    $\PGL(2,p^3)$ if $\varphi_p \sim (1,2,3,4,5,6)^{\pm 1}$;
\end{enumerate}
Moreover, $G$ has quotients $\Ln_2(2^3)$ and $\PGL(2,7)$.
\end{prop}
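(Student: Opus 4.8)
The plan is to run the $\Ln_2$-quotient algorithm of Section~\ref{S:algorithm} and then analyse the single $\Ln_2$-ideal it returns by algebraic number theory, in the spirit of \cite[Example~8.1]{l2q}. I begin by pinning down the relevant trace presentation ideal. If $\Delta\in\mcR(F_3,\F_q)$ induces a projective quotient of $G$ of $\Ln_2$-type, then the relators $a^3$, $b^7$, $(ab)^2$ force, via Remark~\ref{R:order} and Cayley--Hamilton, that (on a suitable representative of the $\Sigma_3$-orbit) $t_1=\eta_3=-1$, $\Psi_7(t_2)=0$, and $t_{12}=0$; here $\rho(-1,t_2,0)=t_2^2-3\neq0$, since $t^2=3$ is incompatible with $\Psi_7(t)=0$, so $\langle a,b\rangle$ is absolutely irreducible. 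As a quotient in which $\langle a,b\rangle$ is reducible is dihedral or cyclic (Proposition~\ref{P:anna} together with a direct inspection of the relations after killing $a$ or $b$), hence never of $\Ln_2$-type, every $\Ln_2$-quotient of $G$ occurs at the index $u=(\{1\},\{2\})\in U_3$. The remaining relators $(ac)^2$, $(bc)^2$, $(abc)^2$ either force $t_{13}=t_{23}=t_{123}=0$, or make $c$ coincide projectively with a word in $a,b$; in the latter case, substituting $c$ back collapses the presentation to a cyclic group, so those branches contribute no $\Ln_2$-quotient and are discarded by the algorithm, along with the components meeting $\V(\rho)$ or $\V(\mfD\cap\mfA_4\cap\mfS_4\cap\mfA_5)$. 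On the surviving component the Fricke relation (Corollary~\ref{C:fricke}) degenerates to $t_1^2+t_2^2+t_3^2-4=0$, i.e. $t_3^2=3-t_2^2$.

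Next I identify the resulting $\Ln_2$-ideal $P$. Eliminating $t_2$ between $\Psi_7(t_2)=0$ and $t_3^2=3-t_2^2$ shows that $t_3$ is a root of $X^6-4X^4+3X^2+1$, so $\Phi_3'\otimes_{\Z}\Q/P\otimes_{\Z}\Q$ is the degree-$6$ field $F=\Q(t_3)$ and $P$ has type $\Ln_2(\infty^6)$. By the proposition on the types of $\Ln_2$-ideals, every maximal $\Ln_2$-ideal over $P$ then has residue field of degree at most $6$ over its prime field and there are only finitely many in each characteristic, so $G$ has finitely many $\Ln_2$-quotients in every characteristic. The Galois closure of $F$ is the splitting field $K$ of $X^6-4X^4+3X^2+1$; writing $F=\Q(\eta_7,\sqrt{3-\eta_7^2})$ and noting that the numbers $3-(\eta_7^{(i)})^2$, as $\eta_7^{(i)}$ ranges over the conjugates of $\eta_7$, multiply to $-1$, one computes $\Gal(K/\Q)\cong\Cyc_2\wr\Cyc_3$, with the normal $\Cyc_2^3=\Gal(K/\Q(\eta_7))$ acting by independent sign changes of the three square roots and $\Cyc_3=\Gal(\Q(\eta_7)/\Q)$ cycling the three blocks; this is exactly the imprimitive action on the six roots used in the statement. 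Finally $\Stab_{\Sigma_3}(P)=\langle(1,1,-1)\rangle$, acting on $F$ by $t_3\mapsto-t_3$ and hence on the six geometric points by the block swap $(1,4)(2,5)(3,6)$.

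It remains to read off, for each prime $p$, the maximal ideals of $\Phi_3'$ lying over $P$. For $p\notin\{2,7\}$, the primes ramified in $K$, these correspond to the orbits of $\varphi_p$ on the six roots, an orbit of length $k$ giving residue field $\F_{p^k}$; by Theorem~\ref{T:quotientbijectionfull} such an ideal yields a $\PGL(2,p^{k/2})$-quotient exactly when it has nontrivial $\Sigma_3$-stabilizer, and by the previous paragraph this happens precisely when the orbit is fixed by the block swap, i.e. meets both points of some block; otherwise it yields $\Ln_2(p^k)$. Two $\PGL$-quotients arising from the same sign change $(1,1,-1)$ are subdirectly amalgamated over a central $\Cyc_2$ (the proposition on $G/N_1\cap N_2$), while every simple factor contributes directly; running through the six conjugacy classes of $\Cyc_2\wr\Cyc_3$ on six points — cycle types $1^6$, $2\cdot1^4$, $2^2\cdot1^2$, $2^3$, $3^2$, $6$ — yields precisely the six cases in the statement. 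For the two ramified primes one argues directly: modulo $2$, $\Psi_7$ is irreducible, so $\eta_7\in\F_8$, and $t_3^2=3-\eta_7^2=(1+\eta_7)^2$ in characteristic $2$, whence $t_3=1+\eta_7\in\F_8$; thus the trace tuple is defined over $\F_8$ and the quotient is $\Ln_2(2^3)$ (and, $8$ not being a square, there is no $\PGL$). Modulo $7$, the element $b$ must be unipotent, so $\eta_7=2$ and $t_3^2=-1$ is a non-square in $\F_7$; the trace tuple then lies over $\F_{49}$ and, its Galois symmetry being realized by $(1,1,-1)$ (Proposition~\ref{P:PGL}), the quotient is $\PGL(2,7)$.

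The hard part is this last step: the careful bookkeeping that converts Frobenius cycle types into $\PSL$-versus-$\PGL$ and direct-versus-amalgamated information through the $\Sigma_3$-stabilizers and Theorem~\ref{T:quotientbijectionfull}, together with the computational verification that the algorithm really returns no $\Ln_2$-ideal besides $P$ — in particular that the degenerate branches, where $c$ becomes redundant or where $\rho$ or one of $\mfD,\mfA_4,\mfS_4,\mfA_5$ vanishes, contribute nothing new.
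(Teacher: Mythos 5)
Your proposal is correct and follows the same strategy as the paper: identify the single $\Ln_2$-ideal $P$ of type $\Ln_2(\infty^6)$, observe that its $\Sigma_3$-stabilizer is $\langle(1,1,-1)\rangle$ acting by $\xi \mapsto -\xi$, and then read off the $\PSL$ versus $\PGL$ and direct versus amalgamated information from the Frobenius cycle type via Corollary~\ref{C:bijectiontuples}. The main difference is that you make explicit several steps the paper elides (deriving the defining relations of $P$ from the relators, the cycle-type bookkeeping, and the ramified primes $p=2,7$, which the paper dismisses with ``can be verified directly''), so your account is a correct and slightly more self-contained version of the paper's argument.
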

In this case, we do not need the precise conjugacy type of the Frobenius 
automorphism, the decomposition of $X^6 - 4X^4 + 3x^2 + 1$ is enough.
For example, taking $p = 65537$, we see that 
$X^6 - 4X^4 + 3X^2 + 1 \in \F_p[X]$ has two irreducible factors of degree~$3$;
this shows that $G$ has a quotient $\Ln_2(65537^3)$. Taking $p = 8388617$ we 
see that $X^6 - 4X^4 + 3X^2 + 1 \in \F_p[X]$ has two linear factors and two 
factors of degree~$2$; this shows that $G$ has quotient 
$\Ln_2(8388617) \times \PGL(2, 8388617) \Yup^{\Cyc_2} \PGL(2, 8388617)$,
that is, there is precisely one $N \tri G$ with $G/N \cong \Ln_2(8388617)$,
precisely two $N \tri G$ with $G/N \cong \PGL(2,8388617)$, and no other 
$N \tri G$ with $G/N \cong \Ln_2(8388617^k)$ or $G/N \cong \PGL(2,8388617^k)$ 
for some $k \in \N$.
\begin{proof}
The algorithm returns the single $\Ln_2$-ideal $P = \langle 
x_{1} + 1,
x_{2}^3 + x_{2}^2 - 2x_{2} - 1,
x_{3}^2 + x_{2}^2 - 3,
x_{12},
x_{13},
x_{23},
x_{123}
\rangle$ of type $\Ln_2(\infty^6)$.
The zeroes are
\[
    t = (-1,-\xi^4 + 3\xi^2 - 1, \xi, 0,0,0,0) \in \F_q,
\]
where $\xi$ is a root of $X^6 - 4X^4 + 3X^2 + 1$.
We assume $\F_q = \F_p[\xi]$.
Let $\delta\colon G \to \PSL(2,q)$ be a realization of~$t$.
There is no characteristic such that $-\xi^4 + 3\xi^2 - 1 = 0$ or $\xi = 0$, 
so $\Delta$ is never imprimitive, by Proposition~\ref{P:dihedral}.
Furthermore, $\xi$ is never a root of $\Psi_k$ for $k \in \{3,4,5,6,8,10\}$,
so $|\delta(c)| > 5$ for all $q$ (see Remark~\ref{R:order}), hence the image of 
$\delta$ cannot be $\Alt_4$, $\Sym_4$, or $\Alt_5$.
Thus $\im(\delta) \in \{\Ln_2(q), \PGL(2,\sqrt{q})\}$.
The precise isomorphism type depends on the action of the Galois group.
Note that ${}^\alpha t = {}^\sigma t$ for a Galois automorphism $\alpha$ and a 
non-trivial sign system $\sigma$ if and only if $\sigma = (1,1,-1)$ and 
$\alpha(\xi) = -\xi$.
The result for $p \neq 2,7$ now follows by Corollary~\ref{C:bijectiontuples} 
and the fact that the Galois automorphism in characteristic~$p$ is determined 
by the Frobenius automorphism.
For $p \in \{2,7\}$ the result can be verified directly.
\end{proof}

\subsection{Groups with $\Ln_2$-ideals of type $\Ln_2(p^\infty)$}

The other kind of $\Ln_2$-ideals of Krull dimension~$1$ are the ones containing 
a prime~$p$. They seem to occur far less frequently in practice than ideals of 
type~$\Ln_2(\infty^k)$.
However, when they occur, we can again make precise statements about the 
quotients.

\begin{prop}
Let 
$G = \langle a,b,c \mid a^3 = 1, [a,c] = [c,a^{-1}], aba = bab, 
    abac^{-1} = caba \rangle$.
There exist epimorphisms $G \to \Ln_2(q)$ if and only if $q = 3^k$ for some 
$k \in \N$.
Similarly, there exist epimorphisms $G \to \PGL(2,q)$ if and only if $q = 3^k$ 
for some $k \in \N$.
\end{prop}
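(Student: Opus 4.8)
The plan is to run the algorithm \texttt{L2Quotients} on $G$ and argue from its output along the same lines as the proof of the previous proposition. Here $m = 3$, with $g_1, g_2, g_3$ corresponding to $a, b, c$, and I expect the algorithm to return a single $\Ln_2$-ideal $P \tri \Phi_3'$, of Krull dimension $1$ and with $P \cap \Z = \langle 3 \rangle$, so that $P$ is of type $\Ln_2(3^\infty)$ in the sense of Section~\ref{S:l2ideals}; one would display explicit Gr\"obner-basis generators for $P$. Granting this, the two ``only if'' directions are immediate from the main theorem of Section~\ref{S:arbitrary}: any $N \tri G$ with $G/N$ of $\Ln_2$-type corresponds to a maximal $\Ln_2$-ideal $M$ lying over $P$ for some $u \in U_3$, and then $M \cap \Z \supseteq P \cap \Z = \langle 3 \rangle$ forces $\Phi_3'/M$ to be a field of characteristic $3$, so $q$ is a power of $3$. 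The finitely many exceptional groups $\PSL(2,q)$ and $\PGL(2,q)$ with $q \le 5$ (in particular $\PSL(2,3) \cong \Alt_4$ and $\PGL(2,3) \cong \Sym_4$) are excluded by direct computation, using the ideals $\mfD$, $\mfA_4$, $\mfS_4$, $\mfA_5$.

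For the ``if'' direction one mimics the previous proof. First describe the zeroes $t \in \overline{\F_3}^{\mcI_3}$ of $P$; since $\Phi_3'/P$ is a one-dimensional $\F_3$-algebra, $\V(P)$ defines a curve over $\overline{\F_3}$, and for every $k$ it should have a closed point whose residue field is $\F_{3^k}$, generated by $t$ (this is where I would want $\Phi_3'/P$ to be, essentially, a polynomial ring $\F_3[y]$ in the one free trace coordinate, so that closed points correspond to irreducible polynomials over $\F_3$, of which there is one of every degree; failing that, Weil's bound plus a check of small $k$ does the same job). For such a point let $\delta \colon G \to \PSL(2, 3^k)$ be a realization of $t$. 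By Proposition~\ref{P:dihedral}, $\delta$ is never imprimitive, because $P$ is an $\Ln_2$-ideal and hence $\mfD \not\subseteq P$; and by Remark~\ref{R:order}, the order of $\delta(c)$ exceeds $5$ for every $k$ (checking, from the corresponding generator of $P$ and the relevant $\Psi_n$, that the free trace coordinate is not a root of $\Psi_n$ for $n \in \{3,4,5,6,8,10\}$, after excluding finitely many special points for small $k$), so $\im(\delta)$ is none of $\Alt_4$, $\Sym_4$, $\Alt_5$. Hence $\im(\delta) \in \{\PSL(2, 3^k), \PGL(2, 3^{k/2})\}$, and which case occurs is decided, via Theorem~\ref{T:quotientbijectionfull} and Corollary~\ref{C:bijectiontuples}, by whether ${}^\sigma t = {}^\alpha t$ for some non-trivial $\sigma \in \Sigma_3$ and a Frobenius automorphism $\alpha$. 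One reads off from $P$ the unique candidate $\sigma$ (the sign change leaving the defining equations invariant), and then the maximal ideals over $P$ with trivial $\Sigma_3$-stabilizer yield the quotients $\PSL(2, 3^k)$, while those with stabilizer $\langle \sigma \rangle$ yield $\PGL(2, 3^{k/2})$.

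The main obstacle is the final counting step: showing that \emph{every} power $3^k$ is attained, for both the $\PSL$ and the $\PGL$ family (note that part (3) of the analysis of $\Ln_2$-ideals in Section~\ref{S:l2ideals} by itself only gives infinitely many $k$). For the $\PSL$ side this is the statement that the curve $\Spec \Phi_3'/P$ has, for every $k$, a closed point of degree $k$ with trivial $\sigma$-stabilizer; for the $\PGL$ side it is the statement that for every $d$ it has a closed point of degree $2d$ whose $\sigma$-stabilizer is $\langle\sigma\rangle$, equivalently, that the $\sigma$-twisted Frobenius $\sigma\alpha^d$ has fixed points generating $\F_{3^{2d}}$ over $\F_3$. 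After identifying the curve explicitly both reduce to elementary but slightly fiddly statements about irreducible polynomials over $\F_3$ (respectively, $\sigma$-invariant such polynomials) and orders of elements of $\F_{3^{2d}}^{\times}$; everything preceding this is a routine adaptation of the argument already carried out for the $\Ln_2(\infty^k)$ example.
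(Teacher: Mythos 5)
Your proposal follows essentially the same route as the paper: the algorithm produces a single ideal $P$ of type $\Ln_2(3^\infty)$, the ``only if'' direction follows immediately from $3 \in P$, and for the ``if'' direction one parameterizes the zeroes of $P$ by $\xi \in \ol{\F_3}^\times$ (with $t_{123} = \xi$, $t_3 = t_{13} = t_{23} = \xi + \xi^{-1}$, and $t_1, t_2, t_{12}$ constant), then decides $\PSL$ versus $\PGL$ from the interplay of the sign system $\sigma = (1,1,-1)$ with Frobenius. The paper's write-up is in fact terser than your sketch at the final realizability step --- it simply records the dichotomy (image is $\PGL(2,3^\ell)$ if $k = 2\ell$ and $\xi^{3^\ell} = -\xi$, and $\Ln_2(3^k)$ otherwise) without dwelling on counting points of each degree or re-checking the $\Alt_4$/$\Sym_4$/$\Alt_5$/dihedral exclusions, so your extra caution there is not a divergence of method, just additional explicitness.
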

\begin{proof}
The algorithm returns the single $\Ln_2$-ideal
\[
    P = \langle 3, x_{1} + 1, x_{2} + 1, x_{12} - 1, x_{13} - x_{3}, 
        x_{23} - x_{3}, x_{123}^2 - x_{3}x_{123} + 1 \rangle
\]
of type $\Ln_2(3^\infty)$, so $\Ln_2$-quotients can only occur in 
characteristic~$3$, proving the `only if' parts. It remains to show that every 
$3$-power occurs. The zeroes of $P$ are the trace tuples of the form
\[
    t = (t_1, t_2, t_3, t_{12}, t_{13}, t_{23}, t_{123}) 
        = (2,2,\xi + \xi^{-1},1,\xi + \xi^{-1},\xi + \xi^{-1},\xi)
\]
with $\xi \in \ol{\F_3}$.
Let $k = [\F_3[\xi]:\F_3]$, and let $\delta\colon G \to \PSL(3, 3^k)$ be a 
realization of~$t$. If $k = 2\ell$ and $\xi^{3^\ell} = -\xi$, then the Galois 
automorphism $\alpha = (x \mapsto x^{3^\ell})$ and the sign system 
$\sigma = (1,1,-1)$ induce the same action on~$t$, so the image of $\delta$ is 
$\PGL(2, 3^\ell)$, by Proposition~\ref{P:PGL}.
Otherwise, the image is $\Ln_2(3^k)$.
\end{proof}

Variations of the presentation yield similar results. We omit the easy proof.

\begin{prop}
Let $H = \langle a,b,c \mid [a,c][a^{-1},c], [b,a]ba^{-1}, 
    a^{-1}c^{-1}abac^{-1}a^{-1}b^{-1} \rangle$.

\begin{enumerate}
    \item
    Let $G = H/\langle a^5 \rangle^H$.
    Then $\Ln_2(q)$ and $\PGL(2,q)$ are quotients of $G$ if and only if 
    $q = 5^k$ for some $k \in \N$.

    \item
    Let $G = H/\langle a^7, (ab^{-1})^8\rangle^H$.
    Then $\Ln_2(q)$ and $\PGL(2,q)$ are quotients of $G$ if and only if 
    $q = 7^k$ for some $k \in \N$.
    
    \item
    Let $G = H/ \langle a^{11}, (ab^{-1})^5\rangle^H$.
    Then $\Ln_2(q)$ and $\PGL(2,q)$ are quotients of $G$ if and only if 
    $q = 11^k$ for some $k \in \N$.

    \item
    Let $G = H /\langle a^{19}, (ab^{-1})^9\rangle^H$.
    Then $\Ln_2(q)$ is a quotient of $G$ if and only if $q = 19^k$ for some 
    $k \in \N$ or $q = 37$;
    and $\PGL(2,q)$ is a quotient of $G$ if and only if $q = 19^k$ for some 
    $k \in \N$.
\end{enumerate}
\end{prop}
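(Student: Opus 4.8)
The plan is to run Algorithm~\texttt{L2Quotients} on each of the four groups and to read off the isomorphism types of the images from the structure of the returned $\Ln_2$-ideals, exactly as in the proof of the preceding proposition. In cases~(1)--(3) I expect the algorithm to return a single $\Ln_2$-ideal $P$ of Krull dimension~$1$ with $P \cap \Z = \langle p \rangle$ for $p = 5, 7, 11$ respectively, hence of type $\Ln_2(p^\infty)$ by the classification in Section~\ref{S:l2ideals}; in case~(4) it returns, in addition to such an ideal $P$ with $p = 19$, one further $\Ln_2$-ideal which is maximal, with residue field $\F_{37}$ and trivial $\Sigma_m$-stabilizer. The \textbf{only if} direction is then immediate from Theorem~\ref{T:quotientbijectionfull} and Corollary~\ref{C:bijectiontuples}: an $\Ln_2$-quotient of $G$ corresponds to a maximal $\Ln_2$-ideal containing one of the returned ideals, and such a maximal ideal lying in $\V(P)$ contains $p$, so the quotient is defined over a field of characteristic $p$; in case~(4) the extra maximal ideal contributes exactly the quotient $\Ln_2(37)$, and since its stabilizer is trivial it yields $\PSL$, not $\PGL$.

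For the \textbf{if} direction I would make $\V(P)$ explicit. From the generating set returned by the algorithm one sees that the zeroes of $P$ form a one-parameter family of trace tuples $t(\xi)$, with $\xi$ ranging over $\ol{\F_p}$, in the same spirit as the family $(2,2,\xi+\xi^{-1},1,\xi+\xi^{-1},\xi+\xi^{-1},\xi)$ in the previous proof. Fix $\xi$, put $k := [\F_p[\xi]:\F_p]$, and let $\delta\colon G \to \PSL(2,p^k)$ be a realization of $t(\xi)$. I would check, for every admissible $\xi$: (i) $t(\xi)$ is not imprimitive, using Proposition~\ref{P:dihedral} --- the subset of the $t_I$ that would have to vanish does not vanish for any $\xi$; (ii) the order of a suitable generating image, read off via Remark~\ref{R:order} from the torsion relations $a^n$ and, in cases~(2)--(4), $(ab^{-1})^m$, exceeds $5$, so that $\im(\delta)$ is not $\Alt_4$, $\Sym_4$, or $\Alt_5$. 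By Dickson's classification $\im(\delta)$ is therefore $\Ln_2(p^k)$ or $\PGL(2,p^\ell)$ with $k = 2\ell$.

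To decide which case occurs, I would apply Proposition~\ref{P:PGL}: $\im(\delta) \cong \PGL(2,p^\ell)$ exactly when $k = 2\ell$ and ${}^\alpha t(\xi) = {}^\sigma t(\xi)$ for a generator $\alpha$ of $\Gal(\F_{p^k}/\F_{p^\ell})$ and a non-trivial sign system $\sigma$; from the shape of the family this reduces to a single distinguished $\sigma$ together with an elementary condition such as $\xi^{p^\ell} = -\xi$ (exactly as $\xi^{3^\ell} = -\xi$ occurred in the previous proof). It then remains to show that every exponent is actually realized: for each $k$ there is $\xi$ of degree $k$ over $\F_p$ for which no such pair $(\alpha,\sigma)$ exists, which gives $\Ln_2(p^k)$; and $\xi$ of degree $2k$ with $\xi^{p^k} = -\xi$, which gives $\PGL(2,p^k)$. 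I expect this surjectivity onto all powers to be the only step needing a genuine argument: it amounts to a density statement over $\ol{\F_p}$, namely that the conditions $[\F_p[\xi]:\F_p] = 2k$ and $\xi^{p^k} = -\xi$ are simultaneously satisfiable for every $k$ while generically the pair $(\alpha,\sigma)$ does not exist, so that both the $\PSL$- and the $\PGL$-families are infinite. Everything else --- the explicit generators of $P$, the computation of the isolated ideal in case~(4), and the verification that $37$ is the only exceptional prime --- is the same routine check of the algorithm's output carried out in the previous proof, which is why the paper records it as "easy".
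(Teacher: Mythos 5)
Your reconstruction is exactly what the paper intends: the paper records this as an "easy proof" omitted because it follows the method of the preceding proposition verbatim, which is precisely the plan you lay out --- identify the returned $\Ln_2(p^\infty)$ ideal (plus the extra $\Ln_2(37)$ maximal ideal with trivial $\Sigma_m$-stabilizer in case~(4)), parametrize its zeroes by $\xi \in \ol{\F_p}$, rule out imprimitivity and the small groups via Proposition~\ref{P:dihedral} and Remark~\ref{R:order}, and apply Proposition~\ref{P:PGL} to distinguish $\PSL$ from $\PGL$. The density step you flag is likewise left implicit in the paper's proof of the preceding proposition; it holds by a routine counting argument in $\ol{\F_p}$ showing that for every $\ell$ there are elements of degree exactly $2\ell$ with $\xi^{p^\ell} = -\xi$ and, for every $k$, elements of degree exactly $k$ not of that form.
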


\subsection{Coxeter groups}

\begin{example}
Let
\[
    C := \begin{pmatrix} 
        1 & 8 & 3 & 2 \\ 
        8 & 1 & 5 & 5 \\ 
        3 & 5 & 1 & 13 \\ 
        2 & 5 & 13 & 1 \end{pmatrix} \in \Z^{4 \times 4}.
\]
Then $\Ln_2(q)$ is a smooth quotient of $G_C$ if and only if $q$ is one of the 
five primes
\[
    79,\ 
    6\,449,\ 
    699\,127\,441,\ 
    8\,438\,303\,591\,453\,175\,937\,527\,551,\ 
    518\,103\,478\,579\,218\,726\,546\,844\,118\,197\,999.
\]
Similarly, $\PGL(2,q)$ is a smooth quotient of $G_C$ if and only if $q$ is one 
of the six primes
\[
    11\,311,\ 
    28\,081,\ 
    68\,466\,319,\ 
    24\,005\,442\,449,\ 
    13\,345\,982\,337\,089,\ 
    408\,327\,690\,683\,773\,678\,271.
\]
\end{example}

\begin{definition}
A \textit{C-group representation} of rank~$m$ is a pair $\mcC = (H, S)$ such 
that $S = \{a_1, \dotsc, a_m\}$ is a generating set of involutions of $H$ which 
satisfy the \textit{intersection property}
\[
    \langle a_i  \mid i \in I \rangle \cap \langle a_j  \mid j \in J \rangle 
        = \langle a_k  \mid k \in I \cap J \rangle \quad 
        \text{for all } I,J \subseteq \{1, \dotsc, m\}.
\]
\end{definition}

\begin{example}
Let
\[
    C := \begin{pmatrix} 
        1 & 2 & 3 & 3 \\ 
        2 & 1 & 3 & 4 \\ 
        3 & 3 & 1 & 2 \\ 
        3 & 4 & 2 & 1 \end{pmatrix} \in \Z^{4 \times 4}.
\]
Then $\Ln_2(7)$ is the only smooth quotient of $G_C$ of $\Ln_2$-type. 
A realization is given by
\[
    S = \left\{
    \begin{pmatrix} 0 & 1 \\ 6 & 0 \end{pmatrix},\ 
    \begin{pmatrix} 3 & 2 \\ 2 & 4 \end{pmatrix},\ 
    \begin{pmatrix} 2 & 6 \\ 5 & 5 \end{pmatrix},\ 
    \begin{pmatrix} 0 & 5 \\ 4 & 0 \end{pmatrix}\right\},
\]
and it is easy to check that this generating set satisfies the intersection 
property.
\end{example}

The intersection property can be checked easily if $G_C$ only has finitely many 
$\Ln_2$-quotients. Infinitely many quotients can be handled as well, but 
require a little more work, as shown in the following result.

\begin{prop}
The only finite group of $\Ln_2$-type having a C-group representation of 
rank~$4$ such that 
$(|a_1a_2|, |a_1a_3|, |a_1a_4|, |a_2a_3|, |a_3a_4|) = (2,3,2,3,3)$ is 
$\PGL(2,5)$.
\end{prop}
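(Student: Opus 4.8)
The plan is to realize the group in question as a smooth quotient of a Coxeter group, enumerate the $\Ln_2$-type quotients with the algorithm, and then sift them through the intersection property. Let $C \in (\Z \cup \{\infty\})^{4 \times 4}$ be the Coxeter matrix with $C_{12} = 2$, $C_{13} = 3$, $C_{14} = 2$, $C_{23} = 3$, $C_{34} = 3$ and $C_{24} = \infty$, so that $G_C = \langle g_1, g_2, g_3, g_4 \mid g_i^2,\ (g_1g_2)^2,\ (g_1g_3)^3,\ (g_1g_4)^2,\ (g_2g_3)^3,\ (g_3g_4)^3 \rangle$. If $(H, \{a_1, a_2, a_3, a_4\})$ is a C-group representation of rank~$4$ with $(|a_1a_2|, |a_1a_3|, |a_1a_4|, |a_2a_3|, |a_3a_4|) = (2, 3, 2, 3, 3)$, then $g_i \mapsto a_i$ defines an epimorphism $G_C \to H$ under which the five prescribed products keep exactly their prescribed orders; hence it suffices to enumerate the smooth $\Ln_2$-type quotients of $G_C$ (with no condition imposed on $|a_2a_4|$) and, for each, to decide whether its generating involutions can be chosen so as to satisfy the full intersection property.

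First I would run the Coxeter-group adaptation of the $\Ln_2$-quotient algorithm on $G_C$, using the ideal $\I(C)$ and the accompanying theorem, to obtain the finite list of $\Ln_2$-ideals of $G_C$. One of them encodes $\PGL(2,5) \cong \Sym_5$; for this ideal I would exhibit an explicit realization — up to conjugacy, $a_1 = (1\,2)$, $a_2 = (3\,4)$, $a_3 = (2\,3)$, $a_4 = (3\,5)$ in $\Sym_5$ — and verify the intersection property by a direct finite computation, there being only $2^4$ parabolic subgroups and hence finitely many instances of the property to check. This shows that $\PGL(2,5)$ does occur.

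It then remains to rule out every other smooth $\Ln_2$-type quotient. The decisive structural point is that two of the maximal parabolics are rigid: both $\langle a_1, a_3, a_4 \rangle$ and $\langle a_1, a_2, a_3 \rangle$ are quotients of the symmetric group $\Sym_4$ presented as a Coxeter group of type $A_3$ (the two-edge path whose end nodes commute), and since $|a_1a_4| = 2$, respectively $|a_1a_2| = 2$, stays exact, neither quotient can be $\Sym_3$, $\Cyc_2$ or trivial, so each is isomorphic to $\Sym_4$. The intersection property forces each maximal parabolic to be proper in $H$ and forces $\langle a_1, a_3, a_4 \rangle \cap \langle a_1, a_2, a_3 \rangle = \langle a_1, a_3 \rangle \cong \Sym_3$. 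Thus $H$ is a $\PSL(2,q)$ or a $\PGL(2,q)$ (necessarily with $q$ odd, since $H$ contains $\Sym_4$) that is generated by two distinct proper subgroups isomorphic to $\Sym_4$ meeting in a common $\Sym_3$, while the remaining two parabolics $\langle a_1, a_2, a_4 \rangle = \langle a_1 \rangle \times \langle a_2, a_4 \rangle$ (a direct product, as $a_1$ commutes with $a_2$ and $a_4$ and $\langle a_2, a_4 \rangle$ is dihedral) and $\langle a_2, a_3, a_4 \rangle$ are proper as well. Feeding this into Dickson's classification (Section~\ref{S:subgrops}) together with the standard description of the $\Sym_4$-subgroups of $\PSL(2,q)$ and $\PGL(2,q)$ — in particular the severe restriction on $q$ and on how two such subgroups may intersect — a short case analysis on $q$ should leave $H \cong \PGL(2,5)$ as the only possibility.

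The step I expect to be the main obstacle is handling an $\Ln_2$-ideal of positive Krull dimension, should the algorithm return one, i.e.\ the case where $G_C$ has infinitely many $\Ln_2$-quotients: one must then show that along such a family a forced parabolic, say $\langle a_1, a_3, a_4 \rangle$, eventually equals $H$, so that the intersection property fails for all but finitely many members, and afterwards dispatch the finitely many exceptions individually, pinning them down from the splitting-field and Frobenius data attached to the ideal (as in the examples of Section~\ref{S:examples}) via Corollary~\ref{C:bijectiontuples}. If instead the algorithm returns only maximal ideals, the whole argument reduces to a finite verification and presents no real difficulty.
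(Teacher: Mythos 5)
Your setup — realizing the group as a smooth quotient of the Coxeter group with $C_{24}=\infty$, running the Coxeter adaptation of the algorithm, exhibiting the $\PGL(2,5)$ realization explicitly, and checking the intersection property for it — is the right skeleton, and the observation that the two parabolics $\langle a_1,a_2,a_3\rangle$ and $\langle a_1,a_3,a_4\rangle$ are $A_3$-Coxeter quotients that smoothness forces to be isomorphic to $\Sym_4$ is correct and potentially useful. But your plan for ruling out all other $q$ has a real flaw, and it is precisely in the place you flag as the main obstacle.

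The algorithm does return an ideal of positive Krull dimension (a single ideal of type $\Ln_2(\infty^2)$), so the infinite case is not hypothetical. Your proposal to handle it is to ``show that along such a family a forced parabolic, say $\langle a_1,a_3,a_4\rangle$, eventually equals $H$.'' That cannot happen: you yourself proved $\langle a_1,a_3,a_4\rangle\cong\Sym_4$ for every smooth quotient, and a $\Sym_4$ is never a group of $\Ln_2$-type for $q>5$. The same applies to $\langle a_1,a_2,a_3\rangle$, and $\langle a_1,a_2,a_4\rangle=\langle a_1\rangle\times\langle a_2,a_4\rangle$ is always a small proper subgroup. So the forced parabolics never swallow $H$; the one that does is $\langle a_2,a_3,a_4\rangle$, whose structure is \emph{not} constrained by the Coxeter data (since $|a_2a_4|$ is left free). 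You explicitly set this parabolic aside, but it is where the whole argument lives: the paper's proof computes the trace tuple of $M\supseteq P$ and reads off the induced trace tuple $\theta=(0,0,0,-1,2/3,-1,-2\eta_4/3)$ of $\langle a_2,a_3,a_4\rangle$, and observes that $\theta$ and the full trace tuple $t$ generate the same field and yield the same $\PSL$-versus-$\PGL$ decision; hence for $(q,30)=1$ one gets $\langle a_2,a_3,a_4\rangle = H$ exactly, which kills the intersection property. Characteristic $2$ gives a dihedral image, characteristics $3$ and $7$ are excluded by the denominators in $\theta$, and characteristic~$5$ is the unique place where $\langle a_2,a_3,a_4\rangle$ drops to $\Sym_4$, leaving $\PGL(2,5)$.

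The other soft spot is the claim that Dickson's classification plus the two $\Sym_4$'s meeting in $\Sym_3$ ``should leave $H\cong\PGL(2,5)$ as the only possibility.'' That is not justified and I do not think it is true as stated: many $\PSL(2,q)$ and $\PGL(2,q)$ with $q\equiv\pm 1\pmod 8$ contain pairs of $\Sym_4$'s meeting in $\Sym_3$, and nothing in that configuration alone forces $q=5$. What forces $q=5$ is the arithmetic of the trace tuple, i.e.\ exactly the data you need to compute for $\langle a_2,a_3,a_4\rangle$. So the proposal needs to be redirected: after obtaining the $\Ln_2(\infty^2)$-ideal, extract the trace tuple, restrict it to $\langle a_2,a_3,a_4\rangle$, and compare isomorphism types of $\langle a_2,a_3,a_4\rangle$ and $H$ characteristic by characteristic.
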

\begin{proof}
The algorithm returns only one $\Ln_2$-ideal $P$ of type $\Ln_2(\infty^2)$.
Let $P \subseteq M \tri \Phi_m'$ be a maximal ideal containing $P$, and let 
$t = t_M \in \F_q^{15}$ be the corresponding full trace tuple (see 
Theorem~\ref{T:maxspec}).
Let $H = \langle a_1, \dotsc, a_4 \rangle \leq \PSL(2, q)$ be the image of the 
induced projective representation.
Then
\begin{align*}
    t & = (t_1, t_2, t_3, t_4, t_{12}, \dotsc, t_{1234}) \\
    & = \left(0,0,0,0,0,-1,0,-1,\frac{2}{3},-1,\eta_4,\frac{4}{3}\eta_4,\eta_4,
        -\frac{2}{3}\eta_4,\frac{1}{3}\right),
\end{align*}
where $\eta_4^2 - 2 = 0$.
The induced trace tuple for $H_1 = \langle a_2, a_3, a_4 \rangle$ is
\[
    \theta := (t_2, t_3, t_4, t_{23}, t_{24}, t_{34}, t_{234}) 
    = \left(0,0,0,-1,\frac{2}{3},-1,-\frac{2}{3}\eta_4\right).
\]
We determine the isomorphism type of $H_{1}$.
By Proposition~\ref{P:dihedral}, $H_{1}$ is dihedral if and only if 
$t_{234} = 0$, that is, if and only if $2 \mid q$. The alternating group of 
degree~$4$ is not generated by involutions, and using the methods of 
Section~\ref{S:subgrouptests} it is easy to check that $H_{1} \cong \Sym_4$ if 
and only if $5 \mid q$; furthermore, $H_{1} \not\cong \Alt_5$ for all~$q$.  
So if $(q, 30) = 1$, then $H_{1}$ is of $\Ln_2$-type. More precisely, if 
$X^2 - 2$ has a solution mod~$p$, then $\eta_4 \in \F_p$, so 
$H_{1} = \PSL(2, p)$. If $X^2 - 2$ has no solution mod~$p$, then $\eta_4$ is a 
generator of $\F_{p^2}/\F_p$, and the Galois group acts by the automorphism 
$\alpha$ which maps $\eta_4$ to $-\eta_4$.
In particular, ${}^\alpha \theta = {}^\sigma\theta$ for $\sigma = (-1,-1,-1)$,
so $H_{1} = \PGL(2,p)$ by Proposition~\ref{P:PGL}.

We now determine the isomorphism type of $H$.
If $2 \mid q$, then $H$ is dihedral; in fact, in this case $\eta_4 = 0$, so 
$t \in \F_2^{15}$, that is, $H = \PSL(2,2) = \Sym_3 = H_{1}$.
If $X^2 - 2$ has a solution mod~$p$, then $H = \PSL(2,p)$; otherwise, 
${}^\alpha t = {}^s t$ for $s = (-1,-1,-1,-1)$ with $\alpha$ as above,
hence $H = \PGL(2,p)$.
In any case, unless $5\mid q$ we see $H = H_{1}$, so the generating set does 
not satisfy the intersection property. We compute the realization
\[
    A = \left(
        \begin{pmatrix} 3 & 0 \\ 0 & 2 \end{pmatrix},
        \begin{pmatrix} 0 & 4 \\ 1 & 0 \end{pmatrix},
        \begin{pmatrix} 4 & 4\eta_4 + 2 \\ 4\eta_4 + 3 & 1 \end{pmatrix},
        \begin{pmatrix} 0 & 2\eta_4 + 2 \\ 2\eta_4 + 3 & 0 \end{pmatrix}
    \right) \in \SL(2, 5^2)^4
\]
of the unique trace tuple in characteristic~$5$, and it is easy to check that 
the induced projective tuple satisfies the intersection property.
\end{proof}

In this way, the $\Ln_2$-quotient algorithm can be used in the classification 
of all C-group representations of $\Ln_2(q)$ and $\PGL(2,q)$ of rank~$4$ 
(\cite{cgroups}).

\section{Acknowledgments}
I thank Eamonn O'Brien for comments on an early version of the paper.

\bibliography{paper}

\noindent
Department of Mathematics \\
The University of Auckland \\
Prive Bag 92019 \\
Auckland\\
New Zealand \\
E-mal address: \texttt{s.jambor@auckland.ac.nz}
\end{document}